\definecolor{col1}{rgb}{0.3.0.4,0.6}
\renewcommand{\P}{\mathbb P} 
\newcommand{\E}{\mathbb{E}}
\newcommand{\V}{\mathbb{V}}
\newcommand{\Q}{\mathbb{Q}}
\newcommand{\R}{\mathbb R}
\newcommand{\N}{\mathbb N}
\newcommand{\Var}{\mathrm{Var}}
\newcommand{\vol}{\mathrm{vol}}
\def\dint{\textup{d}}
\newcommand{\B}{\ensuremath{{\mathbb B}}}
\newcommand{\prosec}{\diamondsuit}
\newcommand{\proj}{\big|}
\newcommand{\Cov}{\operatorname{Cov}}
\newcommand{\Sm}{{\mathbb{S}^{m-1}}}
\newtheorem{thm}{Theorem}[section]
\newtheorem{lemma}[thm]{Lemma}
\newtheorem{df}[thm]{Definition}
\newtheorem{proposition}[thm]{Proposition}
\newtheorem{thmalpha}{Theorem}
	\theoremstyle{definition}
	\newtheorem{rmk}[thm]{Remark}
\begin{document}
	
	\title{\bf Limit Theorems for the Volume of\\ Random Projections and Sections of $\ell_p^N$-balls}
	
	\medskip
	
	\author{Joscha Prochno, Christoph Thäle, Philipp Tuchel}
	
	%\thanks{~}
	
	%\keywords{}
	%\subjclass{}
	%% NB There should be only one primary classification, and zero or
	%more secondary classifications.
	
	\date{}
	
	\maketitle
	
	\begin{abstract}
		\small Let $\mathbb{B}_p^N$ be the $N$-dimensional unit ball corresponding to the $\ell_p$-norm. For each $N\in\mathbb N$ we sample a uniform random subspace $E_N$ of  fixed dimension $m\in\mathbb{N}$ and consider the volume of $\mathbb{B}_p^N$ projected onto $E_N$ or intersected with $E_N$. We also consider geometric quantities other than the volume such as the intrinsic volumes or the dual volumes. In this setting we prove central limit theorems, moderate deviation principles, and large deviation principles as $N\to\infty$. Our results provide a complete asymptotic picture. In particular, they generalize and complement a result of Paouris, Pivovarov, and Zinn [A central limit theorem for projections of the cube, Probab. Theory Related Fields. 159 (2014), 701-719] and another result of Adamczak, Pivovarov, and Simanjuntak [Limit theorems for the volumes of small codimensional random sections of $\ell_p^n$-balls, Ann. Probab. 52 (2024), 93-126].

		\medspace
		\vskip 1mm
		\noindent{\bf Keywords}. {Asymptotic theory of convex bodies, central limit theorem, high-dimensional probability, functional limit theorem, large deviation principle, $\ell_p^N$-ball, moderate deviation principle, random projection, random section, stochastic geometry}\\
		{\bf MSC}. Primary 52A23, 60F05, 60F10; Secondary 46B09, 52A22, 60D05.
	\end{abstract}
	
	\tableofcontents
	
	% % % % % % % % % % % % % % % % % % % %
	% % % % % % % % % % % % % % % % % % % %
	% % % % % % % % % % % % % % % % % % % %
	% % % % % % % % % % % % % % % % % % % %
	\section{Introduction} \label{sec:intro}
	% % % % % % % % % % % % % % % % % % % %
	% % % % % % % % % % % % % % % % % % % %
	% % % % % % % % % % % % % % % % % % % %
	% % % % % % % % % % % % % % % % % % % %
	
	The asymptotic theory of convex bodies plays a major role in the fields of Asymptotic Geometric Analysis and High-Dimensional Probability, two rather young branches of mathematics having their origin in the Local Theory of Banach spaces and Probability in Banach Spaces; we refer to \cite{AGAI2015} and \cite{vershynin2018high}. The two fields and, in particular, the methods used and developed during the last decades are closely linked to convex and discrete geometry, functional analysis, and probability theory, and have a variety of applications in, for instance, compressed sensing and image processing \cite{CGLP2012,FR2013}, information-based complexity \cite{HKNPU2021,HPU2019}, and machine learning \cite{ES2016,MP2004,MS2003}, to mention just a few. 
	
	The probabilistic perspective on high-dimensional convex bodies and structures is an extremely powerful one and has led to several major breakthrough results. We would like to mention here the central limit theorem for convex bodies obtained by Klartag \cite{K2007} or Eldan's stochastic localization method \cite{E2013}, which eventually led to the spectacular solution of the slicing conjecture in \cite{klartaglehec2024} (see also \cite{Chen2021,guan2024,KL2022,LV2024} for previous works in this direction).

	In the last decades a number of weak limit theorems related to convex bodies and high-dimensional random geometric structures and quantities have appeared and we refer the reader to the following non-exhaustive list and the references cited therein \cite{AGPT2019,barany2007central,HLSch2016,HR2005,JP2022,kablu19high,Rei2005,SchSch1991,Schmuck2001,ST2023_schatten,TW2018}. Less than a decade ago, the central limit perspective has been complemented by moderate and large deviation principles through the works of Gantert, Kim, and Ramanan \cite{GKR2017} and Kabluchko, Prochno, and Thäle \cite{KPT2021}. While universality results are mathematically elegant and also quite powerful, from this perspective, high-dimensional convex bodies cannot be distinguished, for instance, by their lower-dimensional random projections. However, the latter play a fundamental role, e.g., in dimension reduction. Large and moderate deviation principles however are known to be typically non-universal and distribution-dependent in that they are parametric in speed and/or rate function. In this sense, it is reasonable to expect that the large and moderate deviation behavior of, for instance, a random projection of a convex body, depends in a more subtle way on the geometry of the underlying body. The goal to precisely identify how a random projection can encode distinct distributional information about the original vector, triggered an intensive research activity and a whole variety of large and moderate deviation principles have been obtained in the frameworks of Asymptotic Geometric Analysis and High-Dimensional Probability since \cite{alonso2018large,APT_KLS,FP2024,GKR2016,KPT2020_sanov,K2021_sharp,KT2022_weighted,kim2018conditional,kim2021large,LR2024,LRX2023}. Moreover, new tools, such as maximum entropy considerations and elements from log-potential theory, emerged from the theory of large deviations and the related field of statistical mechanics and turned out to be quite powerful, leading to a number of breakthrough results \cite{BW2023,DFGZ2023,JP2023_maxwell,JKP2024,KP2021,KPS2025,KPT2020,KLR2022}. For the current state of the art we refer to the recent survey \cite{Prochno2024} on the large and moderate deviations approach in geometric functional analysis. 
	
The motivation for the present paper stems at first from a result obtained by Paouris, Pivovarov, and Zinn who proved the following central limit theorem in \cite{paouris2014central}. For $N\in\N$ let $\B_\infty^N=[-1,1]^N$ be the unit $N$-dimensional cube and assume that $m\in\N$ with $m\leq N$. Denote by $\mathbb{G}_{m,N}$ the Grassmannian manifold of $m$-dimen\-sional linear subspaces of $\R^N$, which is equipped with the Haar probability measure $\mu_{m,N}$, and suppose that $(E_N)_{N\geq m}$ is a sequence of independent random subspaces such that $E_N\in \mathbb{G}_{m,N}$ has distribution $\mu_{m,N}$. Define for each $N\in\N$ a random variable 
  \[
    Z_N := \vol_m\big(\B_\infty^N\big| E_N \big),
  \]
where $\B_\infty^N\big| E_N$ denotes the orthogonal projection of $B_\infty^N$ onto the linear subspace $E_N$. Then
  \[
    \frac{Z_N-\E[Z_N]}{\sqrt{\V[Z_N]}} \xrightarrow[N\to\infty]{\dint} \mathcal{N}(0,1),
  \]  
where $\stackrel{\dint}{\rightarrow}$ denotes convergence in distribution. At its core, the proof is based on the Gaussian projection formula
  \[
    \vol_m(G \B_\infty^N) = \det (GG^*)^{\frac{1}{2}}\vol_m\big(\B_\infty^N\big| E_N \big)
  \]
 with a standard Gaussian matrix $G\in \R^{m\times N}$ having independent columns $g_1,\dots,g_N$ ($G^*$ denotes the transpose of $G$) and the fact that $G B_\infty^N$ is a random zonotope (i.e., the Minkowski sum of the random line segments $[-g_i,g_i]$). The latter allows a representation of the volume of the Gaussian projection essentially in terms of a $U$-statistic (or a special case of Minkowski's theorem on mixed volumes of convex sets, to put it in convex geometry parlance), which assures that one can apply Vitale's central limit theorem for Minkowski sums of random convex sets \cite{Vitale1987}; one should note, however, that both the Gaussian projection as well as the determinantal part contribute to the asymptotic normality, which requires a careful and delicate analysis (we refer to \cite{paouris2014central} for more information). 
 
 Another motivation for this paper comes from the work of Adamczak, Pivovarov, and Simanjuntak \cite{AdamczakPivovarovSimanjuntak}. In contrast to what has been discussed before, these authors study for general $\ell_p$-balls in $\R^N$ with $0< p\leq\infty$ limit theorems for volumes of random hyperplane sections; see Section \ref{sec:results} for formal definitions. More precisely, if $\B_p^N$ denotes the $\ell_p$-ball in $\R^N$ and if $(H_N)_{N\geq 1}$ is a sequence of independent random elements of the Grassmannian $\mathbb{G}_{N-1,N}$ of hyperplanes in $\R^N$, they show that there is a sequence $(c_N)_{N\geq 1}$ such that
   \[
     N^{3/2} \Bigg( \frac{ \vol_{N-1} (\B_p^N \cap H_N)}{\vol_{N-1}(\B_p^{N-1})}-c_N \Bigg) \xrightarrow[N\to\infty]{\dint} \xi    
   \]
where $\xi \sim \mathcal N(0,\sigma_p^2)$ and both, $(c_N)_{N\geq 1}$ as well as $\sigma_p^2$, are given explicitly (see  \cite[Theorem 3.4]{AdamczakPivovarovSimanjuntak}). A similar result is also obtained for the volumes of the sections with a random subspace of a fixed codimension $m\in\N$ if $0<p<2$ (see \cite[Theorem 3.1]{AdamczakPivovarovSimanjuntak}). The proof of this result is based on probabilistic formulas for volumes of sections of convex bodies developed by Nayar and Tkocz \cite{NayarTrocz} and Chasapis, Nayar, and Tkocz \cite{ChasapisNayarTrocz}. They in turn allow a representation of the volume of $\B_p^N\cap H_n$ in terms of determinants involving auxiliary random variables. Together with a novel reverse H\"older-type inequality, this eventually paves once again the way to applying limit theorems for classical U-statistics. 
 
\medspace

Its is immediate from the discussion above that the proof in \cite{paouris2014central} is tailored for the $\ell_\infty$-structure of the cube and cannot be used to obtain a similar central limit result for general $\ell_p$-balls. Similarly, it seems that the approach taken in \cite{AdamczakPivovarovSimanjuntak} cannot be modified to deal with random sections of fixed dimension, for example. In particular, complementing the central limit theorem for the projections of the cube (and more generally for $\ell_p$-balls) by moderate and large deviation principles for random projections and sections requires a completely different approach with different tools. The aim of this paper is to provide a complete picture in a 4-fold way:
\begin{itemize}
\item[(i)] We consider simultaneously random projections and random sections with random linear subspaces of fixed dimension. 
\item[(ii)] We cover the full range of limit theorems from the central limit perspective over moderate deviation principles to large deviation principles.
\item[(iii)] We study limit theorems for random sections and projections of the general class of $\ell_p^N$-balls.
\item[(iv)] We investigate general geometric functionals of random sections and projections, which includes the volume as a special case. As some of our functionals are related to convex bodies, the convexity of the sections and projections is relevant. This explains the restriction to $p\geq 1$ as opposed to $p>0$ in \cite{AdamczakPivovarovSimanjuntak}.
\end{itemize}

	% % % % % % % % % % % % % % % % % % % % % % % % %
	% % % % % % % % % % % % % % % % % % % % % % % % %
	\section{Main results} \label{sec:results} 
	%%%%%%%%%%%%%%%%%%%%%%%%%%%%%%%%
	%%%%%%%%%%%%%%%%%%%%%%%%%%%%%%%%	
	
	We shall now continue with the presentation of the main results of this paper. The first subsection is devoted to the limit theorems for the volume of random projections and sections, while in the second we present the more general theorems we prove in this paper and from which we actually deduce the result of the first subsection as corollaries.
	
	In the following presentation we use standard notation such as $\|\,\cdot\,\|_p$ for $p$-norm and denote by $\B_p^N$ the unit ball of the normed space $\ell_p^N:=(\R^N,\|\,\cdot\,\|_p)$; we only consider $p\in[1,\infty]$. If $p=2$, then we simply write $\|\,\cdot\,\|:=\|\,\cdot\,\|_2$ and denote by $\mathbb{S}^{N-1}:=\big\{x\in\R^N \,:\, \|x\|=1\big\}$ the Euclidean unit sphere in $\R^N$. For the rest of this paper, we denote by $q\in[1,\infty]$ the H\"older conjugate of $p\in[1,\infty]$, which is defined by the relation$\frac{1}{p}+\frac{1}{q} = 1$; we apply the convention that $\frac{1}{\infty} := 0$. For the Grassmannian manifold consisting of all $m$-dimensional linear subspaces in $\R^N$ we write $\mathbb{G}_{m,N}$  and we denote the unique rotation-invariant Haar probability measure it carries by $\mu_{N,m}$ (also referred to as the uniform distribution on $\mathbb{G}_{m,N}$). For a convex body $C\subseteq \R^m$ (i.e., compact, convex set with non-empty interior), $C\proj E$ denotes the orthogonal projection of $C$ onto $E$, $\vol_m$ is just $m$-dimensional Lebesgue measure, and $\kappa_m := \vol_m(\B_2^m) = \frac{\pi^{\frac{m}{2}}}{\Gamma(1+\frac{m}{2})}$. A Gaussian distribution with expectation $a\in\R$ and variance $b^2>0$ will be denoted by $\mathcal N(a,b^2)$. For other unexplained notation or notions we refer to Section \ref{sec:prelim and notation}.

%%%%%%%%%%%%%%%%%%%%%%%%%%%%%%
\subsection{Statement of the main theorems}
%%%%%%%%%%%%%%%%%%%%%%%%%%%%%%
	
In all of the following statements $\prosec$ is to be understood as follows: for $m\in\N$, an integer $N\geq m$, a subspace $E \in \mathbb{G}_{m,N}$, and a convex body $C \subseteq \R^N$,
  \[
    C \prosec E :=
    \begin{cases}
     C \big| E &: \quad \text{orthogonal projection of $C$ onto $E$} \cr
     C \cap E& : \quad \text{section of $C$ with $E$},
    \end{cases}
  \]
i.e., the symbol stands for for an orthogonal projection or a section. 

We begin with the central limit theorem (CLT) for the volume of random orthogonal projections and sections of $\ell_p^N$-balls.
	
	\begin{thmalpha}[CLT -- volume of projections and sections of $\ell_p^N$-balls]\label{thm: clt for ellp balls 2}
		Fix $m\in\N$ and $p\in (1,\infty)\setminus \{2\}$. For each $N\geq m$, let $E_{N}\in \mathbb{G}_{m,N}$ be a random subspace with distribution $\mu_{m,N}$.
		Then, for some $\sigma_\prosec >0$ and $\mu_\prosec \in \R$, we have
		\begin{align*}
			\sqrt{N}\Big( \vol_m \big(N^{\frac{1}{p}-\frac{1}{2}}(\mathbb{B}_p^N \prosec E_N)\big) -\mu_\prosec \Big)\xrightarrow[N\to\infty]{\dint} Z,
		\end{align*}
		where $Z \sim \mathcal{N}(0,\sigma_\prosec^2)$. 
		%is a centered Gaussian random variable with variance $\sigma_\prosec^2>0$ 
		%and $\prosec$ is one of the symbols $\big|$ or $\cap$. 
		For $\prosec = \big|$ the result extends to the case $p=\infty$, and for $\prosec = \cap$ to the case $p=1$.
	\end{thmalpha}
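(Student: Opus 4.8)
The plan is to treat projections and sections uniformly by passing to a Gaussian model, to reduce the normalised volume to a smooth functional of an i.i.d.\ sum, and then to invoke a multivariate central limit theorem together with the delta method. For sections, let $G\in\R^{N\times m}$ have i.i.d.\ $\mathcal{N}(0,1)$ entries and denote its rows by $g_1,\dots,g_N\in\R^m$; since the column span of $G$ has distribution $\mu_{m,N}$, realising $E_N$ this way and changing variables by $(G^*G)^{1/2}$ in an orthonormal coordinate system on $E_N$ yields the exact identity
\[
\vol_m(\B_p^N\cap E_N)\ \stackrel{d}{=}\ \det(G^*G)^{1/2}\cdot\frac1m\int_{\Sm}\Big(\textstyle\sum_{k=1}^N|\langle g_k,\phi\rangle|^{p}\Big)^{-m/p}\dif\phi .
\]
For projections, the Gaussian projection formula gives $\vol_m(\B_p^N\proj E_N)\stackrel{d}{=}\vol_m(G\B_p^N)/\det(GG^*)^{1/2}$ for $G\in\R^{m\times N}$ with i.i.d.\ $\mathcal{N}(0,1)$ entries and columns $g_1,\dots,g_N\in\R^m$; writing $\vol_m(G\B_p^N)=\kappa_m\,\E_u[\rho_{G\B_p^N}(u)^m]$ with $u$ uniform on $\Sm$ and
\[
\rho_{G\B_p^N}(u)^{-1}\ =\ \max_{\theta\in\R^m}\ \frac{\langle u,\theta\rangle}{\big(\sum_{k=1}^N|\langle g_k,\theta\rangle|^{q}\big)^{1/q}}
\]
(the gauge of $G\B_p^N$, i.e.\ the quotient $\ell_p$-norm on $\R^m$ induced by $G$; this reflects the polar duality $(\B_p^N\proj E_N)^\circ=\B_q^N\cap E_N$), the whole expression is again built from i.i.d.\ vectors $g_k\sim\mathcal{N}(0,I_m)$. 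In both cases the decisive empirical object is $\tfrac1N\sum_k|\langle g_k,\theta\rangle|^{r}$ with $r=p$ for sections and $r=q$ for projections.

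Since $\langle g,\theta\rangle\sim\mathcal{N}(0,\|\theta\|^2)$, a law of large numbers that is uniform over $\Sm$ — proved by a net argument using the (Lipschitz for $r=1$, smooth for $r>1$) dependence on $\theta$ and exponential tails — gives $\tfrac1N\sum_k|\langle g_k,\phi\rangle|^{r}\to\gamma_r:=\E|\mathcal{N}(0,1)|^{r}$ uniformly, while $\det(\cdot)/N^m\to1$ for the relevant $m\times m$ Gram matrix $\sum_k g_k g_k^*$. Bookkeeping the powers of $N$ then shows that the scaling $N^{m(1/p-1/2)}$ is exactly the right one, that $N^{m(1/p-1/2)}\vol_m(\B_p^N\prosec E_N)$ converges to $\mu_\cap=\kappa_m\gamma_p^{-m/p}$ (sections), resp.\ $\mu_{|}=\kappa_m\gamma_q^{\,m/q}$ (projections), and that in either case it equals an elementary combination — the product $D_N F_N$ for sections, the ratio $F_N/D_N$ for projections — of a determinantal factor $D_N\to1$ and a spherical factor $F_N\to\mu_\prosec$. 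We take $\mu_\prosec$ to be this limit (equivalently $\E[V_N]$; the difference is $O(N^{-1})$ and hence disappears after multiplication by $\sqrt N$).

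Both factors fluctuate at scale $N^{-1/2}$, and — this is the structural point — each is asymptotically linear in one and the same i.i.d.\ sum of functions of the $\|g_k\|$. Expanding $\det(I_m+A)^{1/2}$ around $A=\tfrac1N\sum_k(g_k g_k^*-I_m)$ gives $\sqrt N(D_N-1)=\tfrac1{2\sqrt N}\sum_k(\|g_k\|^2-m)+o_P(1)$. A first-order Taylor expansion of $t\mapsto t^{-m/p}$ under the $\phi$-integral (for sections), resp.\ an envelope-theorem perturbation analysis of the maximum above (for projections), gives $\sqrt N(F_N-\mu_\prosec)=\tfrac1{\sqrt N}\sum_k\Phi(g_k)+o_P(1)$ with $\Phi$ a centred explicit multiple of $\|g\|^{r}$ (plus an additive constant), $r=p$ resp.\ $r=q$. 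The multivariate CLT for the i.i.d.\ vectors $(\|g_k\|^2,\|g_k\|^{r})\in\R^2$ makes $\big(\sqrt N(D_N-1),\sqrt N(F_N-\mu_\prosec)\big)$ jointly asymptotically Gaussian, and the delta method applied to the product or ratio map that reassembles $N^{m(1/p-1/2)}\vol_m(\B_p^N\prosec E_N)$ gives
\[
\sqrt N\Big(\vol_m\big(N^{1/p-1/2}(\B_p^N\prosec E_N)\big)-\mu_\prosec\Big)\ \xrightarrow[N\to\infty]{\dint}\ Z,\qquad Z\sim\mathcal{N}(0,\sigma_\prosec^2),
\]
where $\sigma_\prosec^2=\V\big[a\,\|g\|^2+b\,\|g\|^{r}\big]$ for explicit constants $a,b$, both nonzero, and $g\sim\mathcal{N}(0,I_m)$. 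For $r\neq2$ the functions $t\mapsto t^2$ and $t\mapsto t^{r}$ are affinely independent on $(0,\infty)$, so $\sigma_\prosec^2>0$; this is precisely where the hypothesis $p\neq2$ enters — at $p=2$ one has $r=2$ and, computing $a$ and $b$, $a+b=0$, so the limit degenerates, consistently with $\vol_m(\B_2^N\prosec E_N)$ being deterministic. The sole role of the restriction $p\in(1,\infty)$ is to ensure that $\theta\mapsto(\sum_k|\langle g_k,\theta\rangle|^{r})^{1/r}$ concentrates around $\gamma_r^{1/r}\|\theta\|$ at the scale $N^{1/r}$; this persists for $r=1$, i.e.\ for $p=1$ in the section case and $p=\infty$ in the projection case, but fails for $p=\infty$ sections and $p=1$ projections, where the relevant exponent is $\infty$ and $\max_{k\le N}|\langle g_k,\theta\rangle|\asymp\sqrt{\log N}$ imposes a different normalisation.

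The main obstacle is the fluctuation analysis of $F_N$ in the projection case: in contrast to the section case, where the radial function of $\B_p^N\cap E_N$ is an explicit empirical average, the radial function of $\B_p^N\proj E_N$ is a constrained optimum (the quotient $\ell_p$-norm), so one needs a perturbation analysis of $\max_\theta\langle u,\theta\rangle(\sum_k|\langle g_k,\theta\rangle|^{q})^{-1/q}$ that is uniform in $u\in\Sm$. This rests on a functional central limit theorem for the empirical process $\theta\mapsto\tfrac1{\sqrt N}\sum_k\big(|\langle g_k,\theta\rangle|^{q}-\gamma_q\|\theta\|^{q}\big)$ on $\Sm$ — with tightness, and the modulus-of-continuity estimate that keeps the envelope remainder at order $o_P(N^{-1/2})$, coming from the smooth ($q>1$) or sub-Gaussian-increment ($q=1$) dependence on $\theta$ — together with the non-degeneracy of the Hessian of the limiting objective $\theta\mapsto\langle u,\theta\rangle/\|\theta\|$ at its unique maximiser $\theta=u$. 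A secondary technical point, common to both cases, is the exchange of the $\sqrt N$-limit with the integral over $\Sm$; by rotational symmetry all directions on $\Sm$ are equivalent, and the exchange is justified by uniform-over-$\Sm$ moment and lower-tail bounds for $\tfrac1N\sum_k|\langle g_k,\theta\rangle|^{r}$, which keep $D_N$, $F_N$ and the Taylor and perturbation remainders integrable.
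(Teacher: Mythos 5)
Your proposal is mathematically sound and reaches the correct conclusion, but it takes a genuinely different organizational route from the paper. You separate the normalized volume into a determinantal prefactor $D_N=\det(G^*G/N)^{1/2}$ (resp.\ $\det(GG^*/N)^{1/2}$) and a spherical integral $F_N$, linearize each factor, and then exploit the structural observation that after integrating over $\Sm$ \emph{both} linearizations collapse to i.i.d.\ sums of functions of $\|g_k\|$ alone: $\sqrt N(D_N-1)$ is governed by $\|g_k\|^2$ and $\sqrt N(F_N-\mu_\prosec)$ by $\|g_k\|^{r}$ (via rotational symmetry, $\int_\Sm|\langle g,\phi\rangle|^r\sigma(\dint\phi)=c_m\|g\|^r$), so the whole CLT reduces to a bivariate CLT in $\R^2$ plus the finite-dimensional delta method. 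This makes the degeneracy at $p=2$ completely transparent: the two coordinates $\|g\|^2,\|g\|^r$ coincide exactly there. The paper's route is complementary in spirit but differently packaged: it never splits off the determinant as a scalar factor, instead writing the support (resp.\ radial) function as $\frac1N\sum_i|\langle(GG^*/N)^{-1/2}g_i,u\rangle|^q$, Taylor expanding in the \emph{matrix} $(GG^*/N)^{-1/2}$ inside a functional CLT on $C(\Sm)$ (Theorem~2.4), and only then passing to the volume via Hadamard differentiability of $f\mapsto f^{m/q}$, the support-to-radial map (Lemma~4.7), and $f\mapsto\kappa_m\int_\Sm f\,\sigma(\dint u)$. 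The process-level statement the paper proves is strictly more than what the volume CLT needs, but it is reused as-is for the MDP and LDP (Theorems~2.5 and~2.6) and delivers the Hausdorff-convergence interpretation \eqref{eq: Hausdorff conv to ball proj}--\eqref{eq: Hausdorff conv to ball sec}; your argument is lighter and more scalar, and makes the bivariate covariance structure of the limit explicit, but would have to be redone from scratch for the deviation principles. Two technical points you should tighten: your envelope/perturbation analysis of the gauge $\sup_\theta\langle u,\theta\rangle/(\sum_k|\langle g_k,\theta\rangle|^q)^{1/q}$ has to be uniform in $u\in\Sm$ with remainder $o_P(N^{-1/2})$ (this is precisely the content of the paper's Lemma~4.7, whose proof shows that the minimizer in the support-to-radial transform stays within $o(t_N)$ of $u$ uniformly, and is not a one-line envelope-theorem application); and your appeal to ``exponential tails'' for the uniform LLN is only available for $r<2$, whereas the theorem (and your argument) must cover all $p\in(1,\infty)\setminus\{2\}$, hence $r>2$ as well — there one must fall back on polynomial moments and the functional CLT of Gin\'e (the paper's Proposition~3.10), which is in fact what both your chaining step and the paper's Lemma~4.11 ultimately rest on.
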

	
	\begin{rmk}
		We can provide exact values for $\mu_\prosec$ and $\sigma_\prosec$, namely 
		\begin{align*}
			\mu_\prosec &= \begin{cases}
				\frac{2^{m/2} \pi ^{\frac{m (q-1)}{2 q}} \Gamma \left(\frac{q+1}{2}\right)^{m/q}}{\Gamma \left(\frac{m}{2}+1\right)} &: \quad \prosec = \big|\\
				\noalign{\vskip9pt}
				\frac{2^{-\frac{m}{2}} \pi ^{\frac{m (p+1)}{2 p}} \Gamma \left(\frac{p+1}{2}\right)^{-\frac{m}{p}}}{\Gamma \left(\frac{m}{2}+1\right)} &:\quad \prosec = \cap
			\end{cases}
			\intertext{and}
			\sigma_\prosec^2 &= \begin{cases}
				\frac{m \pi^{\frac{qm-m}{q}}\big(2^{q/2}\Gamma((q+1)/2)\big)^{\frac{2m}{q}}\big(4\Gamma(1+m/2)\Gamma(m/2 +q) - (2m+q^2)\Gamma((m+q)/2)^2 \big)}{2q^2\Gamma(1+m/2)^2\Gamma((m+q)/2)^2} &: \quad \prosec = \big|\\
				\noalign{\vskip9pt}
				\frac{m \pi^{(pm+m)/p}\big(2^{p/2}\Gamma((p+1)/2)\big)^{-\frac{2m}{p}}\big(4\Gamma(1+m/2)\Gamma(m/2 +p) - (2m+p^2)\Gamma((m+p)/2)^2 \big)}{2p^2\Gamma(1+m/2)^2\Gamma((m+p)/2)^2} &: \quad \prosec = \cap.
			\end{cases}
		\end{align*} 
	In the formulas above, $q$ denotes the Hölder conjugate of $p$. We refer to Figures \ref{fig:varvol_q} and \ref{fig:section_varvol_q} for a plot of $\sigma_{|}^2$ as a function of $q$ (for fixed $m\in\N$) and to Figure \ref{fig:varvol_m} and \ref{fig:section_varvol_m} for a plot of $\sigma_{\prosec}^2$ as a function of $m$ (for fixed $q$).
	\end{rmk}
	
	\begin{figure}[ht]
		\centering
		\includegraphics[width=\textwidth]{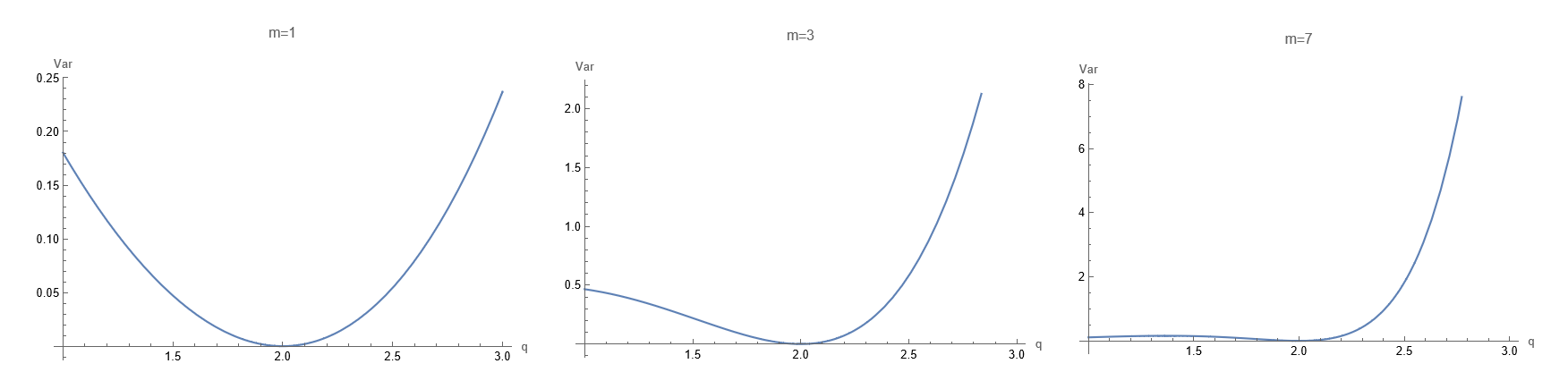}
		\caption{Plot of the asymptotic variance $\sigma_\prosec^2$ in the case of projections in the parameter $q$ for different values of $m$. }
		\label{fig:varvol_q}
	\end{figure}
	
	\begin{figure}[ht]
		\centering
		\includegraphics[width=\textwidth]{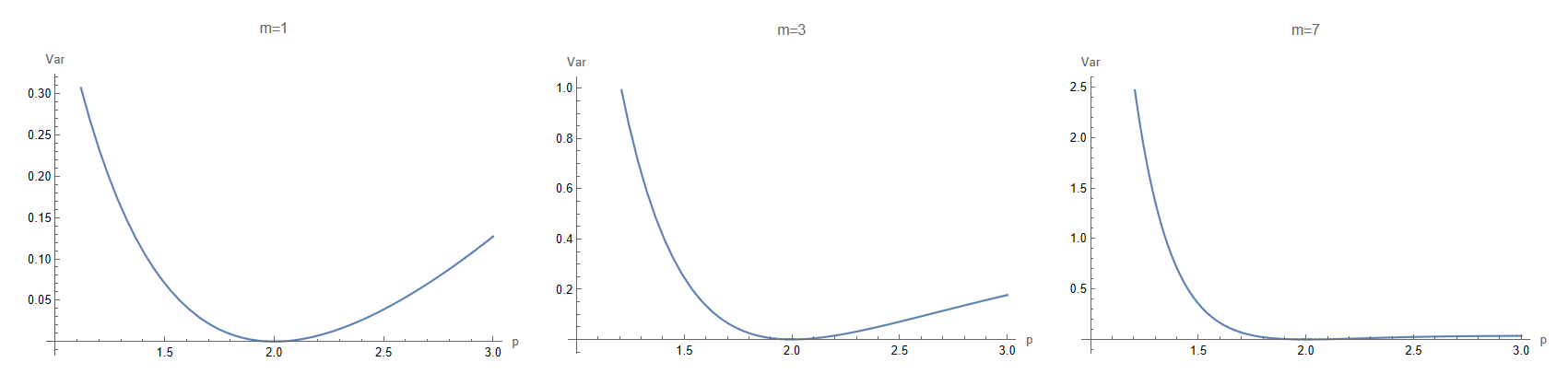}
		\caption{Plot of the asymptotic variance $\sigma_\prosec^2$ in the case of sections in the parameter $p$ for different values of $m$.}
		\label{fig:section_varvol_q}
	\end{figure}
	
	Clearly the plots in Figure \ref{fig:varvol_q} and Figure \ref{fig:section_varvol_q} show that the variance is degenerate in the case $q=p=2$, but also that for large subspace dimensions $m\in\N$ the variance for parameters $q\in(1,2)$, i.e., for $p\in(2,\infty)$, is ``close'' to being zero.  This is in line with the central limit theorem \cite[Theorem 3.4]{AdamczakPivovarovSimanjuntak} for volumes of sections of codimension one, in which a much stronger rescaling of order $N^{3/2}$ is required in order to arrive at a non-degenerate limit distribution.
	 
	\noindent As already mentioned in the introduction, the special case $\prosec = \big|$ and $p= \infty$ was obtained by Paouris, Pivovarov, and Zinn in \cite[Theorem 1.1]{paouris2014central}. Using our notation, they showed that the cube satisfies the central limit theorem 
	$$
	\frac{\vol_m(\mathbb{B}_\infty^N \big| E_N)- \E[\vol_m(\mathbb{B}_\infty^N \big| E_N)]}{\V[\vol_m(\mathbb{B}_\infty^N \big| E_N)]} \xrightarrow[N\to\infty]{\dint} \xi
	$$
	for $\xi \sim  \mathcal{N}(0,1)$. 
	However, exact expressions for the normalizing constants are not provided in \cite{paouris2014central}. In this special case, we obtain 
	$$
	\mu_{\big|} = \frac{2^{m/2}}{\Gamma \left(\frac{m}{2}+1\right)} = \Big(\frac{2}{\pi}\Big)^{m/2}\vol_m(\mathbb{B}^m_2)
	$$
	(note that this is consistent with the case $m=d$ in \cite[Corollary 1.3]{kabluchko2021new}) and the asymptotic variance is given by
	$$
	\sigma^2_{\big|} = 2^{m-1} m \left(\frac{4}{\Gamma \left(\frac{m+1}{2}\right)^2}-\frac{2 m+1}{\Gamma \left(\frac{m}{2}+1\right)^2}\right).
	$$
	The value for $\sigma^2_{\big|}$ was not known before to the best of our knowledge. We also remark that the special case $m=1$ and $\prosec = \big|$ was shown in \cite[Corollary 2.6]{kablu19high}. Note that this is consistent with our result, which provides the asymptotic variance
	$$
	\sigma^2_{\big|} = \frac{2 \pi ^{-1/q} \left(2^{q/2} \Gamma \left(\frac{q+1}{2}\right)\right)^{2/q} \left(2 \sqrt{\pi } \Gamma \left(q+\frac{1}{2}\right) - \left(q^2+2\right) \Gamma \left(\frac{q+1}{2}\right)^2\right)}{q^2 \Gamma \left(\frac{q+1}{2}\right)^2}.\\
	$$
	
	We continue with our second main result, a moderate deviation principle (MDP) complementing our CLT in Theorem \ref{thm: clt for ellp balls 2}. On an intuitive level, if a sequence of random variables $(X_N)_{N\in\N}$ satisfies a CLT in the form $\sqrt{N}(X_n-\E X_N)\xrightarrow[N\to\infty]{\dint} \xi$ with $\xi\sim\mathcal{N}(0,1)$, then $(X_N)_{N\in\N}$ satisfies a (full) moderate deviation principle, provided that for all sequences $(\beta_N)_{N\in\N}$ with $\beta_N\to\infty$ and $\beta_N=o(\sqrt{N})$,
		\begin{equation}\label{eq:MDPLDPintuitive}
		\P\Big[{\sqrt{N}\over\beta_N}(X_N-\E X_N)\geq s\Big] \approx \exp\Big(-N I(s)\Big),\qquad s\in\R,
		\end{equation}
		for some function $I(s)$, where $\approx$ stands for asymptotic equivalence up to sub-exponential terms, the formal definition will follow in Section \ref{sec:prelim and notation}. It is often the case that $I(s)$ is a quadratic function in the variable $s$, which reflects the Gaussian tail behavior. Note that for random orthogonal projections the next theorem generalizes the moderate deviation principle of \cite{kablu19high} when $m=1$.
	
	\begin{thmalpha}[MDP -- volume of projections and sections of $\ell_p$-balls]\label{thm: MDP for volume of ell_p balls}
		Fix $m\in\N$ and let $(\beta_N)_{N\in\N}\in\R^{\N}$ be a sequence with $\beta_N\to\infty$ and $\beta_N = o(\sqrt{N})$. If $\prosec = \big|$, let $p\in(2,\infty]$ and if $\prosec = \cap$, let $p\in [1,2)$. For each $N\geq m$, let $E_{N}\in \mathbb{G}_{m,N}$ be a random subspace with distribution $\mu_{m,N}$. Then the sequence
		$$
		\left(\frac{\sqrt{N}}{\beta_N}\Big(\vol_m\big(N^{\frac{1}{p}-\frac{1}{2}} (\mathbb{B}_p^N \prosec E_N)\big) -  \mu_\prosec \Big)\right)_{N\in\N}
		$$
		satisfies an MDP with speed $\beta_N^2$ and rate function $I_{\mathrm{MDP}}^\prosec$ given in Equation \eqref{eq: MDP rate function volume} below (see also Remark \ref{rmk: MDP rate function}).
	\end{thmalpha}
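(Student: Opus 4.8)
The plan is to prove Theorem~B by reducing the random variable $V_N:=\vol_m\big(N^{1/p-1/2}(\B_p^N\prosec E_N)\big)$ to an explicit smooth functional of the empirical measure of i.i.d.\ Gaussian vectors, linearising it, and transferring the moderate deviation principle from the resulting i.i.d.\ sum; for the volume this is the special case of a general moderate deviation principle for geometric functionals proved in the next subsection, and the argument below is the concrete mechanism behind it. First I would realise $E_N$ as the span of a standard Gaussian matrix with i.i.d.\ columns $g_1,\dots,g_N\in\R^m$. For sections, $\vol_m(\B_p^N\cap E_N)=\det(A^\top A)^{1/2}\,\vol_m\{s\in\R^m:\|As\|_p\le 1\}$ with $A=[g_1,\dots,g_N]^\top$, and passing to polar coordinates turns the right-hand side into a radial integral; after the scaling $N^{1/p-1/2}$ all powers of $N$ cancel and
\[
    V_N \;\overset{d}{=}\; \frac{\det(M_N)^{1/2}}{m}\int_{\Sm}\Phi_N(\theta)^{-m/p}\,\mathrm d\theta,\qquad M_N:=\frac1N\sum_{i=1}^N g_ig_i^\top,\quad \Phi_N(\theta):=\frac1N\sum_{i=1}^N|\langle g_i,\theta\rangle|^p.
\]
For projections with $p<\infty$ one starts from the Gaussian projection formula $\vol_m(G\B_p^N)=\det(GG^\top)^{1/2}\,\vol_m(\B_p^N\proj E_N)$ and evaluates $\vol_m(G\B_p^N)$ via the radial-function formula $\vol_m(K)=\tfrac1m\int_{\Sm}\rho_K(\theta)^m\,\mathrm d\theta$ together with a parametrisation of the (unique, since $\ell_p$ is strictly convex) minimal-$\ell_p$-norm preimages $x=\big(\sgn(G^\top\lambda)_i\,|(G^\top\lambda)_i|^{q-1}\big)_{i}$, which renders the otherwise implicit radial function explicit; after polar coordinates and the same scaling one obtains the analogue of the display with $p$ replaced by the conjugate exponent $q$ and an additional determinantal weight $\det\!\big(\tfrac1N\sum_i|\langle g_i,\omega\rangle|^{q-2}g_ig_i^\top\big)$. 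The case $p=\infty$ is treated separately via the zonotope identity $\vol_m(G\B_\infty^N)=2^m\sum_{|I|=m}|\det g_I|$. In all cases $V_N$ is a functional of the empirical measure $L_N=\tfrac1N\sum_i\delta_{g_i}$ whose value at the standard Gaussian law is $\mu_\prosec$.

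Next I would linearise this functional at the Gaussian law: writing $M_N=I_m+\tfrac1N\sum_i(g_ig_i^\top-I_m)$ and $\Phi_N(\theta)=c_p+\tfrac1N\sum_i(|\langle g_i,\theta\rangle|^p-c_p)$ with $c_p:=\E|\langle g_1,\theta\rangle|^p$ (independent of $\theta\in\Sm$), and Taylor-expanding $t\mapsto t^{-m/p}$ and $\det(\cdot)^{1/2}$ (a Hoeffding decomposition of the $U$-statistic does the same job when $p=\infty$), one obtains
\[
    V_N=\mu_\prosec+\frac1N\sum_{i=1}^N h_\prosec(g_i)+R_N,
\]
where $h_\prosec$ is explicit, $\E[h_\prosec(g_1)]=0$, $\Var(h_\prosec(g_1))=\sigma_\prosec^2$, and $R_N$ collects the second- and higher-order contributions. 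By rotational invariance $h_\prosec(g_i)$ depends on $g_i$ only through $\|g_i\|$, so the leading term is an i.i.d.\ average of a one-dimensional random variable and $\sigma_\prosec^2$ is the variance of an explicit function of a $\chi_m$-distributed variable; this is also the route to the variance formulas accompanying Theorem~A.

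Then I would transfer the MDP. Since $q<2$ for $\prosec=\proj$ and $p<2$ for $\prosec=\cap$, one has $|h_\prosec(g_1)|\le C(1+\|g_1\|^2)$, so $h_\prosec(g_1)$ has a finite exponential moment in a neighbourhood of the origin; the classical moderate deviation principle for sums of i.i.d.\ random variables then shows that $\tfrac{\sqrt N}{\beta_N}\cdot\tfrac1N\sum_i h_\prosec(g_i)=\tfrac1{\sqrt N\,\beta_N}\sum_i h_\prosec(g_i)$ satisfies an MDP at speed $\beta_N^2$ with quadratic rate function $x\mapsto x^2/(2\sigma_\prosec^2)$, which is $I_{\mathrm{MDP}}^\prosec$ after the elementary rewriting of Remark~\ref{rmk: MDP rate function}. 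This is exactly where the hypothesis on $p$ is used: for $p\in(1,2)$ and $\prosec=\proj$ one has $q>2$, $|\langle g,\theta\rangle|^q$ has no exponential moment, and moderate deviations cease to be Gaussian. It then remains to show that $\tfrac{\sqrt N}{\beta_N}R_N$ is superexponentially negligible at speed $\beta_N^2$, i.e.\ $\limsup_{N}\beta_N^{-2}\log\pr{\big|\tfrac{\sqrt N}{\beta_N}R_N\big|>\delta}=-\infty$ for every $\delta>0$; then the principle of exponentially equivalent sequences gives that $\tfrac{\sqrt N}{\beta_N}(V_N-\mu_\prosec)$ satisfies the same MDP, which is the assertion of Theorem~B.

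The main obstacle is the superexponential bound on $R_N$. It requires uniform large-deviation control of $\Phi_N$ (and the projection analogues) over $\theta\in\Sm$ — obtained from a net argument on the sphere, cheap because $m$ is fixed, together with exponential tail bounds for the relevant i.i.d.\ averages (again using $p<2$, resp.\ $q<2$) — and in particular the uniform lower bound $\inf_{\theta\in\Sm}\Phi_N(\theta)\ge c_p/2$ on an event of overwhelming probability, without which the Taylor remainder of $t\mapsto t^{-m/p}$ cannot be controlled. For projections there is the additional difficulty that the determinantal weight $\det\!\big(\tfrac1N\sum_i|\langle g_i,\omega\rangle|^{q-2}g_ig_i^\top\big)$ has singularities along the hyperplanes $\langle g_i,\omega\rangle=0$ that are integrable over $\Sm$ but, when $q$ is close to $1$, not square-integrable; so the integrand cannot be bounded uniformly in $\omega$, the $\omega$-integral must be retained throughout, and the singular (near-parallel) configurations must be estimated through the Cauchy--Binet expansion $\det\!\big(\sum_i w_i g_ig_i^\top\big)=\sum_{|I|=m}\big(\prod_{i\in I}w_i\big)\det(g_I)^2$ and the integrability of $\omega\mapsto|\langle e,\omega\rangle|^{q-2}$. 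Controlling these singular spherical integrals on the exponential scale is the principal technical point.
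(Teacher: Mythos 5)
Your proposal takes a genuinely different route from the paper. The paper first proves a \emph{functional} MDP on $(C(\Sm),\|\,\cdot\,\|_\infty)$ for the process $u\mapsto\frac{1}{\beta_N\sqrt N}\sum_i\big(|\langle\sqrt N v_i,u\rangle|^q-\E[\cdots]\big)$, where the $v_i$ are columns of a Haar-distributed element of the Stiefel manifold (Theorem~\ref{thm: Functional MDP}, via a Taylor expansion of $M\mapsto|\langle Mg,u\rangle|^q$ around $\operatorname{Id}_m$, the Dawson--G\"artner/projective-limit Lemma~\ref{lem: LDP for function}, exponential tightness, and the LDP delta method). It then transports this to the volume through a chain of Hadamard-differentiable maps: the identity $h\big(N^{1/2-1/q}V_N\B_p^N,u\big)^q=\frac1N\sum_i|\langle\sqrt N v_i,u\rangle|^q$ of Lemma~\ref{lem: support function representation}, the infimum map from support to radial functions (Lemma~\ref{lem: had diff for sup to rad}), the power map $f\mapsto f^\alpha$ (Lemma~\ref{lem: hadamard dir exp alpha}), and integration over $\Sm$, all fed into Proposition~\ref{prop: delta method for LDPs}. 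Critically, at no stage does a Gram determinant or a singular integrand ever appear on the projection side. You instead linearise the volume directly as a scalar functional of the empirical Gaussian measure, apply a one-dimensional i.i.d.\ MDP to the leading term, and rely on exponential equivalence to discard the remainder. Conceptually this is a legitimate alternative, and it would produce a pleasantly explicit quadratic rate function, but it leads you into technical terrain the paper deliberately avoids.

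The gap is the one you yourself flag. For projections with $p\in(2,\infty)$ you evaluate $\vol_m(G\B_p^N)$ via the radial function and the minimal-$\ell_p$-preimage parametrisation, and the change of variables produces the weight $\det\big(\tfrac1N\sum_i|\langle g_i,\omega\rangle|^{q-2}g_ig_i^\top\big)$. For $q$ near $1$ the factors $|\langle g_i,\omega\rangle|^{q-2}$ are only just integrable over $\Sm$, the Cauchy--Binet expansion multiplies $m$ of them, and you then need to control these singular spherical integrals \emph{at speed $\beta_N^2$, superexponentially, uniformly in the Gaussian configuration}. You call this ``the principal technical point'' but give no mechanism for resolving it; until you do, the superexponential negligibility of $R_N$, and hence the theorem, is not established. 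The singularity is purely an artefact of the radial-function parametrisation: $h(G\B_p^N,u)=\|G^\top u\|_q$ is just a clean $q$-power sum, no determinant and no singular weight, and this is exactly why the paper sets up the functional MDP at the level of the support function and passes to the radial function afterwards via the Hadamard-differentiable infimum map. One smaller point: your closing rate function $x\mapsto x^2/(2\sigma_\prosec^2)$ is not literally the form stated in Equation~\eqref{eq: MDP rate function volume} and Remark~\ref{rmk: MDP rate function}; it is the expected simplification (by degree-$2$ homogeneity and matching the CLT variance), but to claim it proves the theorem as stated you would still need to verify the identification.
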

	
		% Including the second image for varvol_m
	\begin{figure}[ht]
		\centering
		\includegraphics[width=\textwidth]{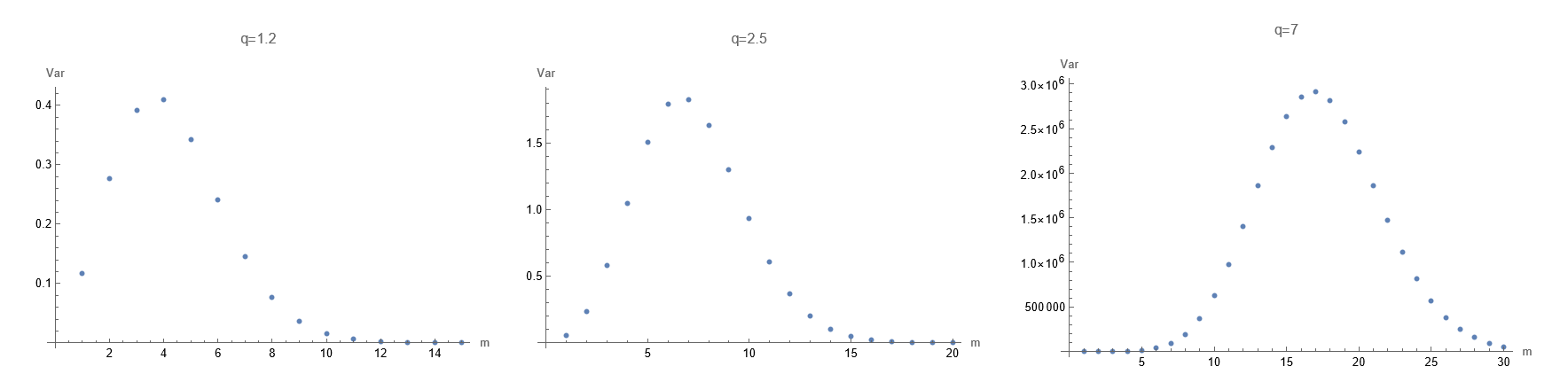}
		\caption{Plot of the asymptotic variance $\sigma_\prosec^2$ in the case of projections in the parameter $m$ for different values of $q$. }
		\label{fig:varvol_m}
	\end{figure}
	
	\begin{figure}[ht]
		\centering
		\includegraphics[width=\textwidth]{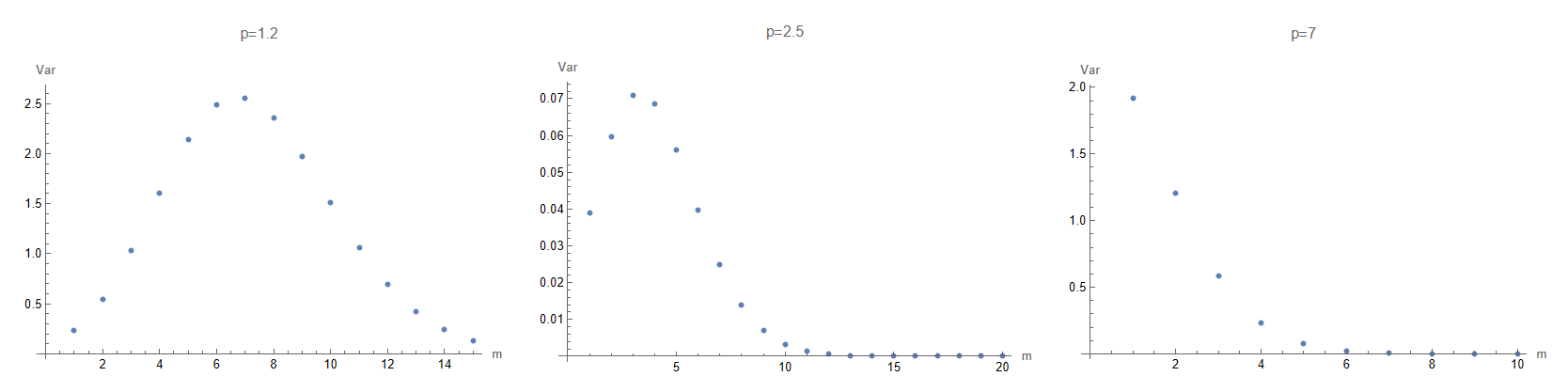}
		\caption{Plot of the asymptotic variance $\sigma_\prosec^2$ in the case of sections in the parameter $m$ for different values of $p$.}
		\label{fig:section_varvol_m}
	\end{figure}

	\noindent On a formal level a large deviation principle (LDP) is the same as a moderate deviation principle as in \eqref{eq:MDPLDPintuitive}, but with $\beta_N=\sqrt{N}$. To present our third main result, an LDP for the volume of projections and sections of $\ell_p$-balls, let us define
	$$
	F_k(\mu) := \Big(\int_{\R^m} |\langle x,u_1\rangle|^q \dint\mu(x)\Big)^{\frac{1}{q}},\ldots,\int_{\R^m} |\langle x,u_k\rangle|^q \dint\mu(x)\Big)^{\frac{1}{q}} \Big) \in \R^k
	$$
	with $\{u_1,u_2,\ldots,u_k\}=\mathbb{S}^{m-1}\cap\Q^m$, where $k\in\N$ and $\mu$ is an element in the space $\mathscr{M}_1(\R^m)$ of Borel probability measures on $\R^m$. Let $\gamma$ denote the standard Gaussian measure on $\R$ and let us write $\gamma^{\otimes m}$ for its $m$-fold product measure on $\R^m$. For $\mu,\nu\in\mathscr{M}_1(\mathbb{R}^m)$, the relative entropy or Kullback--Leibler divergence $H(\nu|\mu)$ is defined as
	\[
	  H(\nu|\mu) :=
	  \begin{cases}
		\int_{\R^m} \log\left(\frac{\dint\nu}{\dint\mu}\right) \dint\nu &:\quad \nu \ll \mu \\
		+\infty &:\quad \text{otherwise,}
	  \end{cases}
	\]
where $\nu \ll \mu$ denotes absolute continuity of $\nu$ with respect to $\mu$ and $\frac{\dint\nu}{\dint\mu}$ the Radon--Nikodym derivative of $\nu$ with respect to $\mu$. Finally, we denote by 
	\begin{align}\label{eq: covariance operaotor C}
		\mathscr{C}:\mathscr{M}_1(\R^m)\to\R^{m\times m},\quad \nu\mapsto \mathscr{C}(\nu) := \int_{\R^m} [x \otimes x] \,\dint \nu(x)
	\end{align}
	the covariance map, where $x\otimes y = (x_jy_i)_{i,j=1}^m$ for $x,y\in\R^m$. % (for more details, see \cite[Equation (2.9)]{kim2021large}).
	We denote by $(C(\mathbb{X}), \|\,\cdot\,\|_\infty)$ the space of real valued continuous functions on the compact topological space $\mathbb{X}$ equipped with the uniform norm $\|f\|_\infty := \sup_{x\in\mathbb{X}}|f(x)|$.
	
	\begin{thmalpha}[LDP -- volume of projections and sections of $\ell_p$-balls]\label{thm: LDP for volume of ell_p balls}
		Fix $m\in\N$. If $\prosec = \big|$ let $p\in(2,\infty]$ and if $\prosec = \cap$ let $p\in [1,2)$. For each $N\geq m$, let $E_{N}\in \mathbb{G}_{m,N}$ be a random subspace with distribution $\mu_{m,N}$. Then the sequence
		$$
		\Big(\vol_m\big(N^{\frac{1}{2}-\frac{1}{q}} (\mathbb{B}_p^N\prosec E_N)\big)\Big)_{N\in\N}
		$$
		satisfies an LDP with speed $N$ and rate function $I_{\mathrm{LDP}}^\prosec: [0,\infty) \to [0,\infty]$,
		$$
		I_{\mathrm{LDP}}^\prosec(x) := 
		  \begin{cases} \inf\Big\{I^\prime_{LDP}(h(C,\,\cdot\,))\,:\,  \vol(C) = x\Big\} & : \quad  \prosec = \big| \\ \inf\Big\{I^\prime_{LDP}(f) \,:\, \int_\Sm f^{-k/p}(t)\dint t = x\Big\} & : \quad \prosec = \cap ,
		  \end{cases}
		$$
		where 
		$I^\prime_{\mathrm{LDP}}:(C(\mathbb{S}^{m-1}),\|\,\cdot\,\|_\infty)\to[0,\infty]$ is given by
		\begin{align*}
			I^\prime_{LDP}(f) := \sup_{k\in\N}\inf\Big\{ I_{\mathrm{Stiefel}}(\mu)\,:\, (f^q(u_1),\ldots,f^q(u_k)) = F_k(\mu)\Big\},
		\end{align*}
	 where $q$ is the Hölder-conjugate of $p$ and $I_{\mathrm{Stiefel}}:\mathscr{M}_1(\R^m)\to [0,\infty]$ is the strictly convex rate function
		$$
		I_{\mathrm{Stiefel}}(\nu) := \begin{cases}
			H(\nu|\gamma^{\otimes m}) + \frac{1}{2}\operatorname{Trace}(\operatorname{Id}_m - \mathscr{C}(\nu)) & : \quad \operatorname{Id}_m - \mathscr{C}(\nu) \text{ positive semidefinite },\\
			+\infty &: \quad \text{ otherwise. }
		\end{cases}
		$$
	\end{thmalpha}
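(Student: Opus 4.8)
The plan is to express the rescaled volume as an explicit, $\tau$-continuous functional of the empirical distribution of the rows of a Haar-distributed Stiefel frame, and then to combine a Sanov-type large deviation principle on the Stiefel manifold with the contraction principle.

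First, the probabilistic representation. Realise $E_N$ as the column span of a random $V_N\in\R^{N\times m}$ with $V_N^\ast V_N=\operatorname{Id}_m$, uniform on the Stiefel manifold, so that $E_N$ has law $\mu_{m,N}$; write $w_1,\dots,w_N\in\R^m$ for the rows of $V_N$, so that $\sum_{j=1}^N w_j\otimes w_j=\operatorname{Id}_m$, and set $L_N:=\tfrac1N\sum_{j=1}^N\delta_{\sqrt N\,w_j}\in\mathscr M_1(\R^m)$, which satisfies $\mathscr C(L_N)=\operatorname{Id}_m$. For $\prosec=\proj$, the support function of $\mathbb B_p^N\proj E_N$ in a direction of $E_N$ equals $\|\cdot\|_q$, which under the isometry $\R^m\cong E_N$ becomes $\|V_N\theta\|_q=\bigl(\sum_j|\langle w_j,\theta\rangle|^q\bigr)^{1/q}=N^{1/q-1/2}\bigl(\int_{\R^m}|\langle x,\theta\rangle|^q\,L_N(\dint x)\bigr)^{1/q}$; hence, up to a fixed isometry, $N^{1/2-1/q}(\mathbb B_p^N\proj E_N)=K(L_N)$, the convex body with support function $\theta\mapsto(\int|\langle x,\theta\rangle|^q\dint\mu)^{1/q}$, and therefore $\vol_m\bigl(N^{1/2-1/q}(\mathbb B_p^N\proj E_N)\bigr)=\vol_m(K(L_N))=:\Phi_{\proj}(L_N)$. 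For $\prosec=\cap$, polar coordinates together with $\rho_{\mathbb B_p^N\cap E_N}(\theta)=\|V_N\theta\|_p^{-1}$ and the identity $N^{1/2-1/q}=N^{1/p-1/2}$ give $\vol_m\bigl(N^{1/2-1/q}(\mathbb B_p^N\cap E_N)\bigr)=\tfrac1m\int_{\Sm}\bigl(\int_{\R^m}|\langle x,\theta\rangle|^p\,L_N(\dint x)\bigr)^{-m/p}\dint\theta=:\Phi_{\cap}(L_N)$. In both cases the functional on the right depends on $\mu$ only through the continuous function $t\mapsto(\int|\langle x,t\rangle|^q\dint\mu)^{1/q}$ on $\Sm$ (with $p$ in place of $q$ for sections), so it is of the form displayed in the statement; as a sanity check, evaluating $\Phi_\prosec$ at $\gamma^{\otimes m}$ reproduces the constants $\mu_\prosec$ of the Remark.

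Next, the large deviation input and the contraction. By the large deviation principle for the empirical distribution of the (rescaled) rows of a Haar element of the Stiefel manifold, $(L_N)_{N}$ satisfies an LDP with speed $N$ and good rate function $I_{\mathrm{Stiefel}}$, on $\mathscr M_1(\R^m)$ endowed with a topology $\tau$ fine enough that $\mu\mapsto\int\phi\dint\mu$ is continuous for every continuous $\phi$ of strictly subquadratic growth, while $\mathscr C$ is only lower semicontinuous; this last feature is what permits the effective domain of $I_{\mathrm{Stiefel}}$ to extend beyond $\mathscr C(\nu)=\operatorname{Id}_m$, the term $\tfrac12\operatorname{Trace}(\operatorname{Id}_m-\mathscr C(\nu))$ being the cost of losing second moment to infinity in the limit. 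The hypotheses $p\in(2,\infty]$ for $\prosec=\proj$ (so $q<2$) and $p\in[1,2)$ for $\prosec=\cap$ are exactly what make $|\langle x,\theta\rangle|^q$, respectively $|\langle x,\theta\rangle|^p$, uniformly subquadratic in $x$ over $\theta\in\Sm$; consequently $\mu\mapsto(\int|\langle x,\theta\rangle|^q\dint\mu)^{1/q}$, respectively $\mu\mapsto(\int|\langle x,\theta\rangle|^p\dint\mu)^{1/p}$, is $\tau$-continuous uniformly in $\theta\in\Sm$. Standard stability in convex geometry (uniform convergence of support functions forces Hausdorff and hence volume convergence; dominated convergence for the radial integral) then shows $\Phi_{\proj},\Phi_{\cap}\colon(\mathscr M_1(\R^m),\tau)\to[0,\infty)$ are continuous, and the contraction principle produces the claimed LDP with speed $N$ and rate function $x\mapsto\inf\{I_{\mathrm{Stiefel}}(\mu):\Phi_\prosec(\mu)=x\}$.

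Finally, it remains to recast this infimum. Since $\Phi_{\proj}(\mu)=\vol_m(K(\mu))$ depends on $\mu$ only through $h(K(\mu),\cdot)\in C(\Sm)$, one obtains $\inf\{I_{\mathrm{Stiefel}}(\mu):\Phi_{\proj}(\mu)=x\}=\inf\{I^\prime_{\mathrm{LDP}}(h(C,\cdot)):\vol(C)=x\}$ with $I^\prime_{\mathrm{LDP}}(f):=\inf\{I_{\mathrm{Stiefel}}(\mu):h(K(\mu),\cdot)=f\}$, and the analogous rearrangement through the radial functional settles $\prosec=\cap$. Because $\{u_k\}=\Sm\cap\Q^m$ is dense and $u\mapsto\int|\langle x,u\rangle|^q\dint\mu$ is continuous, the constraint $h(K(\mu),\cdot)=f$ over all of $\Sm$ is equivalent to the countably many scalar constraints $\int|\langle x,u_i\rangle|^q\dint\mu=f^q(u_i)$, $i\in\N$; since $I_{\mathrm{Stiefel}}$ is a good, lower semicontinuous rate function, a compactness argument on its sublevel sets lets one pass the infimum through the monotone limit over truncations, which yields the $\sup_{k\in\N}\inf\{\,\cdot\,:F_k(\mu)=(f^q(u_1),\dots,f^q(u_k))\}$ form. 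The main obstacle is the interface between the two middle steps: one needs the Stiefel LDP in a topology that is simultaneously weak enough to hold — the empirical measures live on a rigid, highly singular manifold with essentially deterministic second moments — and strong enough that the geometric functionals $\Phi_\prosec$ are continuous. This tension is resolved precisely by the dichotomy $p>2$ for projections versus $p<2$ for sections, and it explains why $p=2$ is genuinely a boundary case: there the relevant exponent is $2$, the Gaussian loses the required exponential moments, and no topology can carry both the LDP and the continuity of the volume functional.
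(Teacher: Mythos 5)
Your proposal is correct in substance and relies on the same two pillars as the paper's proof — the Sanov-type LDP of Kim and Ramanan for the empirical measure of scaled Stiefel columns, and the contraction principle — but you factor the computation differently. The paper first establishes the functional LDP on $(C(\mathbb S^{m-1}),\|\cdot\|_\infty)$ as a standalone theorem: contraction to finite evaluation tuples, Dawson–Gärtner to pass to the projective limit, embedding into $C(\Sm)$, and then a separate exponential-tightness argument (Lipschitz bound plus Arzelà–Ascoli) to upgrade the weak LDP to a full one; the volume LDP then follows by one more contraction through the maps $f\mapsto f^{1/q}$, support-to-radial, and $f\mapsto\kappa_m\int f^m$. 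You instead contract in a single step directly from $\mathscr M_q(\R^m)$ to $[0,\infty)$ through the composite functional $\Phi_\prosec$, and only afterwards recast the resulting rate $\inf\{I_{\mathrm{Stiefel}}(\mu):\Phi_\prosec(\mu)=x\}$ in the claimed $\sup_k\inf$ form by a goodness/compactness argument — which is in substance the hard direction of Dawson–Gärtner, just applied at the level of the rate function rather than the LDP. Both factorings are legitimate. The paper's buys a functional LDP of independent interest (Theorem 2.4) and pushes the topological delicacy into a clean, verifiable exponential-tightness step; yours is shorter for the volume conclusion but shifts the burden onto the direct continuity of $\Phi_\prosec$ on $(\mathscr M_q,\tau)$. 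That continuity claim deserves more care than you give it, especially for $\prosec=\cap$: the map $\mu\mapsto\kappa_m\int_{\Sm}\bigl(\int|\langle x,\theta\rangle|^p\dint\mu\bigr)^{-m/p}\dint\theta$ can be infinite (or fail dominated convergence) near measures for which $\int|\langle x,\theta\rangle|^p\dint\mu$ vanishes on a set of directions, so one either restricts to a neighbourhood of the effective domain of $I_{\mathrm{Stiefel}}$ (where absolute continuity with respect to $\gamma^{\otimes m}$ forces strict positivity) and invokes an ``almost continuous'' contraction principle, or verifies, as the paper does implicitly, that the empirical measures never approach that singular set because $\mathscr C(\mathscr L_N)=\operatorname{Id}_m$ deterministically. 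For projections, your uniform-in-$\theta$ continuity statement needs the Lipschitz estimate of Lemma 4.10 combined with convergence of $\int\|x\|^{q-1}\dint\mu$ in the Wasserstein topology; this is true but should be written out, since it is exactly the ingredient the paper instead packages as exponential tightness.
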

	
	%%%%%%%%%%%%%%%%%%%%%%%%
	\subsection{Functional results}
	%%%%%%%%%%%%%%%%%%%%%%%%%
	The main theorems presented in the previous section are, in fact, corollaries of more general theorems that we prove. To see how they relate to the main results for the volume, we refer to Section \ref{sec: From functional limit theorems to the volume}.
	
	\noindent In order to state the results, we require some additional notation. We shall denote by $M^*$ the transpose of a matrix $M$. For $N\ge m$ the Stiefel manifold 
	$$
	\mathbb{V}_{m,N} := \Big\{M\in\R^{m\times N}: MM^* = \operatorname{Id}_m\Big\}
	$$
	carries a unique Haar measure, also called uniform distribution, denoted by $\operatorname{Unif}(\mathbb{V}_{m,N})$ which is invariant under left and right multiplication with elements from the orthogonal group. Further, elements from the Stiefel manifold correspond, up to an orthogonal transformation, to an orthogonal projection (see for instance \cite[Proposition 2 and its proof]{paouris2013small}). In more detail, we have $VC=V{\big|_E} (C\big| E)$, where $E=\operatorname{Range}(V^*)$. Then, if $x\in E$, we can write $x=\sum_{i=1}^m\lambda_i v_i$ for some $\lambda_i$, where $v_i$ are the orthonormal rows of $V$. Observe $V{\big|_E} x = (\lambda_1,\ldots,\lambda_m)$ and hence $V{\big|_E}$ can interpreted as a function that maps the orthonormal basis $v_1,\ldots,v_m$ onto the standard basis of $\R^m$. In particular, for any convex body $C\subseteq\R^N$, we have that
	\[
	  \vol_m(VC) \overset{d}{=} \vol_m(C\big| E),
	\]
	where $V\sim \operatorname{Unif}(\mathbb{V}_{m,N})$ and $E\sim \mu_{m,N}$. The support function of {a convex body $C\subseteq \R^m$} is given by
	\[
	  h(C,u) := \sup_{x\in C}\langle x,u\rangle, \quad u\in \Sm.
	\]
	If $0\in C$, then the function $\rho(C,\,\cdot\,):\Sm \to [0,\infty)$,
	\[
	  \rho(C,u) := \sup\big\{r\in [0,\infty) \,:\, ru \in C\big\} 
	\]
	is called the radial function of $C$. Both, the support and radial function, determine the convex body $C$ uniquely. The random functional sequences in the following theorems are transformations of radial or support functions of the projected or intersected $\ell_p^N$-balls; see Lemma \ref{lem: support function representation} and Lemma \ref{lem: section formula}. 
	{Let us also recall the following representation for the volume of a convex body in terms of its support function, which can be found, e.g., in \cite[p. 151]{SchW2008}. It says that for any convex body $C\subseteq \R^m$ with $0\in C$,
	\begin{equation}\label{eq:volume integration formula radial function}
	  \vol_m(C) = \kappa_m \int_{\Sm} \rho(C,u)^m \, \sigma(\dint u),
	\end{equation}
where we write $\sigma:=\sigma_{m-1}$ for the normalized spherical Lebesgue measure on $\Sm$.}

	We start with the following invariance principle dealing with the weak convergence of random continuous functions on $\Sm$ that are sums over functions of columns of the Stiefel manifold. In the following ${}_2 F_1$ denotes the Gaussian hypergeometric function (see \cite[Chapter 15]{lozier2003nist}) and for random variables $\xi,\xi_N$, $N\in\N$, taking values in some metric space, we shall write $\xi_N\xRightarrow[N\to\infty]{} \xi$ or $\xi_N \xrightarrow{d} \xi$ to denote weak convergence of the corresponding  distributions. Moreover, let us recall that $G(u)$, $u\in\Sm$, is said to be a Gaussian process indexed by $\Sm$ if and only if for any finite collection $u_1,\dots,u_k\in\Sm$, $(G(u_1),\dots,G(u_k))$ is a multivariate Gaussian random variable.
	
	\begin{thm}[Functional CLT for sums over the Stiefel manifold]\label{thm: Functional CLT}
		Let $V_N=(v_1,\dots,v_N)\in\R^{m\times N}$ be uniformly distributed in $\mathbb{V}_{m,N}$ for each $N\ge m$. Let $q\in [1,\infty)\setminus \{2\}$. Then we have
	\[
           \frac{1}{\sqrt{N}}\sum_{i=1}^N  \Bigg(\Big|\langle \sqrt{N}v_i, (\,\cdot\,)\rangle\Big|^q - \E\Big[|\langle \sqrt{N}v_i, (\,\cdot\,)\rangle|^q\Big]\Bigg) \xRightarrow[N\to\infty]{}  Z(\,\cdot\,),
	\]
where 
%weak convergence is understood in the space 
the random variables take values in $(C(\Sm), \|\,\cdot\,\|_\infty)$, and $Z$ is a centered Gaussian process indexed by $\Sm$ whose covariance structure is given by
		\begin{align*}
			&\E[Z(u)Z(v)] = \E[|\langle g,u\rangle\langle g,v\rangle|^q] - \E[|g|^q]^2\Big(1 + \frac{q^2}{2}\sum_{k=1}^m v_k^2 u_k^2 + q^2\sum_{k=1}^m \sum_{\ell = k+1}^m u_k u_\ell v_k v_\ell \Big)\\
			&= \frac{2^q\Gamma(\frac{q+1}{2})^2}{\pi} {}_2 F_1(-q/2,-q/2, 1/2, \langle u,v\rangle^2) - \frac{2^q \Gamma(\frac{q+1}{2})^2}{\pi}\Big(1 + \frac{q^2}{2}\sum_{k=1}^m v_k^2 u_k^2 + q^2\sum_{k=1}^m \sum_{\ell = k+1}^m u_k u_\ell v_k v_\ell \Big)
		\end{align*}
		for $u,v\in \Sm$ with $u\not=v$, and
		$$
		\E[Z(u)^2]= \frac{2^q \Gamma(\frac{1}{2}+q)}{\sqrt{\pi}} - \frac{2^q \Gamma(\frac{q+1}{2})^2}{\pi}\Big(1 + \frac{q^2}{2}\sum_{k=1}^m u_k^4 + q^2\sum_{k=1}^m \sum_{\ell = k+1}^m u_k^2 u_\ell^2  \Big)
		$$ 
		for $u\in\Sm$.
		The same holds true if we replace $\E[|\langle \sqrt{N}v_i, (\,\cdot\,)\rangle|^q]$ in the process by its asymptotic equivalent $\E[|g|^q]$, where $g\sim \mathcal{N}(0,1)$.
	\end{thm}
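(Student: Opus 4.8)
The plan is to couple the Stiefel matrix with a Gaussian one, to expand the process to first order around its Gaussian analogue, and to combine the classical central limit theorem for i.i.d.\ sums in $(C(\Sm),\|\,\cdot\,\|_\infty)$ with Slutsky's lemma. Throughout let $g$ denote a standard Gaussian vector in $\R^m$, so that $\langle g,u\rangle\sim\mathcal N(0,1)$ and $\E|\langle g,u\rangle|^q=\frac{2^{q/2}\Gamma((q+1)/2)}{\sqrt\pi}$ is the scalar quantity written $\E[|g|^q]$ in the statement (independent of $u\in\Sm$). First, since for fixed $u$ the variable $\langle\sqrt N v_i,u\rangle$ is distributed as $\sqrt N\,\theta_1$ with $\theta$ uniform on $\mathbb S^{N-1}$, a routine Gamma-function estimate gives $\sup_{u\in\Sm}\bigl|\E|\langle\sqrt Nv_i,u\rangle|^q-\E[|g|^q]\bigr|=O(1/N)=o(1/\sqrt N)$; hence replacing the centering of the process by $\E[|g|^q]$ alters it only by a term that is $o(1)$ in $C(\Sm)$, which also settles the last sentence of the theorem. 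We then couple by realizing $V_N=(G_NG_N^*)^{-1/2}G_N$ for a matrix $G_N\in\R^{m\times N}$ with i.i.d.\ standard Gaussian entries (this polar factor is indeed $\Uni(\mathbb V_{m,N})$-distributed). Writing $g_1,\dots,g_N$ for the columns of $G_N$ and $B_N:=(\tfrac1N G_NG_N^*)^{-1/2}$ one has $\sqrt N v_i=B_Ng_i$, and since $\tfrac1N\sum_{i\le N}g_ig_i^*=\operatorname{Id}_m+S_N$ with $S_N:=\tfrac1N\sum_{i\le N}(g_ig_i^*-\operatorname{Id}_m)=O_P(N^{-1/2})$ in operator norm, the binomial expansion of $M\mapsto M^{-1/2}$ at $\operatorname{Id}_m$ yields $B_N=\operatorname{Id}_m-\tfrac12 S_N+R_N$ with $\|R_N\|_{\mathrm{op}}=O_P(1/N)$.

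Fix $u\in\Sm$, put $h_i(u):=\langle g_i,(B_N-\operatorname{Id}_m)u\rangle$, and expand $t\mapsto|t|^q$ around $\langle g_i,u\rangle$. Summing over $i$ and dividing by $\sqrt N$ gives $X_N(u)=Y_N(u)+L_N(u)+\mathrm{rem}_N(u)$, where $X_N$ is the process in the statement, $Y_N(u):=\tfrac1{\sqrt N}\sum_{i\le N}(|\langle g_i,u\rangle|^q-\E[|g|^q])$ is its Gaussian analogue, $L_N(u):=\tfrac q{\sqrt N}\sum_{i\le N}|\langle g_i,u\rangle|^{q-1}\sgn(\langle g_i,u\rangle)\,h_i(u)$ is the first-order term, and $\mathrm{rem}_N$ collects the Taylor remainders together with the centering discrepancy. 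Using $|h_i(u)|\le\|g_i\|\,\|B_N-\operatorname{Id}_m\|_{\mathrm{op}}$, the elementary bound $\bigl||t+h|^q-|t|^q-q|t|^{q-1}\sgn(t)h\bigr|\le C_q\min\{|h|^q,(|t|+|h|)^q\}$ for $q>1$ (for $q=1$ one instead uses $\bigl||t+h|-|t|-\sgn(t)h\bigr|\le 2|h|\,\mathbf 1\{|h|\ge|t|\}$ and argues with expectations), together with the strong law $\tfrac1N\sum_i\|g_i\|^q\to\E\|g\|^q$, one checks that $\|\mathrm{rem}_N\|_\infty=o_P(1)$. In $L_N$ we substitute $B_N-\operatorname{Id}_m=-\tfrac12 S_N+R_N$: the $R_N$-part is $O_P(1/\sqrt N)$ in $\|\,\cdot\,\|_\infty$, while $S_Nu=\tfrac1N\sum_j(\langle g_j,u\rangle g_j-u)$ turns the $S_N$-part into the signed second-order $V$-statistic $-\tfrac q{2N^{3/2}}\sum_{i,j}|\langle g_i,u\rangle|^{q-1}\sgn(\langle g_i,u\rangle)\bigl(\langle g_j,u\rangle\langle g_i,g_j\rangle-\langle g_i,u\rangle\bigr)$.

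Its diagonal $i=j$ equals $-\tfrac q{2N^{3/2}}\sum_i|\langle g_i,u\rangle|^q(\|g_i\|^2-1)$, which is $O_P(1/\sqrt N)$ uniformly in $u$. For the off-diagonal part we symmetrize the kernel and apply Hoeffding's decomposition; using the identities $\E[|\langle g,u\rangle|^{q-1}\sgn(\langle g,u\rangle)\,g]=\E[|g|^q]\,u$ and $\E[gg^*]=\operatorname{Id}_m$, the linear Hoeffding term is evaluated explicitly and equals $-\tfrac q2\E[|g|^q]\cdot\tfrac1{\sqrt N}\sum_i(\langle g_i,u\rangle^2-1)$ up to the scalar factor $(N-1)/N\to1$, whereas the completely degenerate part is a degenerate $U$-process whose kernels are Lipschitz in $u$ and which, by a standard maximal inequality for $U$-processes, is $O_P(1/\sqrt N)$ in $\|\,\cdot\,\|_\infty$. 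Altogether $X_N=Y_N-C_N+o_P(1)$ in $C(\Sm)$, where $C_N(u):=\tfrac q2\E[|g|^q]\cdot\tfrac1{\sqrt N}\sum_i(\langle g_i,u\rangle^2-1)$. Now $Y_N$ is a normalized sum of i.i.d.\ centered $C(\Sm)$-valued summands, the common summand $u\mapsto|\langle g,u\rangle|^q-\E[|g|^q]$ being Lipschitz with square-integrable Lipschitz constant $q\|g\|^q$; hence $Y_N$ satisfies the CLT in $C(\Sm)$ with limiting covariance $\Cov(|\langle g,u\rangle|^q,|\langle g,v\rangle|^q)$. The process $C_N$ takes values in the finite-dimensional subspace of $C(\Sm)$ spanned by the quadratic monomials $u\mapsto u_ku_\ell$, so it converges by the multivariate CLT; since $Y_N$ and $C_N$ are functionals of the same i.i.d.\ sequence $(g_i)$, the pair $(Y_N,C_N)$ converges jointly to a jointly Gaussian limit. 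Therefore $Y_N-C_N$, and hence $X_N$ by Slutsky's lemma, converges in $C(\Sm)$ to a centered Gaussian process $Z$.

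It remains to identify the covariance. Writing $Z$ as the limit of $Y_N-C_N$ one obtains $\E[Z(u)Z(v)]=\Cov(|\langle g,u\rangle|^q,|\langle g,v\rangle|^q)-q\,\E[|g|^q]\,\Cov(|\langle g,u\rangle|^q,\langle g,v\rangle^2)+\tfrac{q^2}{4}\E[|g|^q]^2\Cov(\langle g,u\rangle^2,\langle g,v\rangle^2)$. The Gaussian identities $\Cov(\langle g,u\rangle^2,\langle g,v\rangle^2)=2\langle u,v\rangle^2$ and $\Cov(|\langle g,u\rangle|^q,\langle g,v\rangle^2)=q\,\E[|g|^q]\,\langle u,v\rangle^2$ (the latter obtained by writing $\langle g,v\rangle=\langle u,v\rangle\langle g,u\rangle+\sqrt{1-\langle u,v\rangle^2}\,\zeta$ with $\zeta$ a standard normal independent of $\langle g,u\rangle$, and using the Gaussian integration-by-parts identity $\E[X^2|X|^q]=(1+q)\E|X|^q$ for $X\sim\mathcal N(0,1)$), together with the classical bivariate-Gaussian evaluation $\E|\langle g,u\rangle\langle g,v\rangle|^q=\tfrac{2^q\Gamma((q+1)/2)^2}{\pi}\,{}_2F_1(-q/2,-q/2,1/2,\langle u,v\rangle^2)$, make the three terms collapse to the displayed expression for $\E[Z(u)Z(v)]$ when $u\ne v$, and to $\tfrac{2^q\Gamma(1/2+q)}{\sqrt\pi}-\tfrac{2^q\Gamma((q+1)/2)^2}{\pi}\bigl(1+\tfrac{q^2}{2}\bigr)$ when $u=v$, which is the stated value of $\E[Z(u)^2]$ (one uses $\sum_k u_k^4+2\sum_{k<\ell}u_k^2u_\ell^2=1$ and $\sum_k u_k^2v_k^2+2\sum_{k<\ell}u_ku_\ell v_kv_\ell=\langle u,v\rangle^2$ to match the precise form written in the statement). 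The main obstacle in this programme is the uniform-in-$u$ control of $\mathrm{rem}_N$ and of the degenerate $U$-process: since $t\mapsto|t|^q$ is not twice differentiable at the origin for $q<2$ and not differentiable there for $q=1$, the Taylor-type estimates must be carried out in an integrated rather than a pointwise form, and the $U$-process bound must rest on the Lipschitz dependence of the kernels on $u$ instead of on any smoothness; the remaining steps are routine.
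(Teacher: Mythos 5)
Your strategy is sound and genuinely different from the paper's. Both start from the polar coupling $V_N = (G_NG_N^*)^{-1/2}G_N$, but the decompositions diverge. The paper views the summand as $\Phi(M,u,g_i) = |\langle Mg_i,u\rangle|^q$ and Taylor expands in the matrix variable $M$ around $\operatorname{Id}_m$ using the exact first-order Taylor identity with integral remainder (no second derivative ever appears); the resulting integral factor is controlled via the strong law of large numbers in the separable Banach space $C(\Sm\times\mathbb K(m,\delta))$, and joint weak convergence of the Gaussian sum and $\sqrt N(F_N-\operatorname{Id}_m)$ follows from finite-dimensional limits together with tightness. You instead expand $t\mapsto|t|^q$ around the scalar $\langle g_i,u\rangle$, which turns the first-order term into a $V$-statistic whose H\'ajek projection you extract explicitly and whose completely degenerate part you kill by a $U$-process maximal inequality. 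The two decompositions identify the same correction: your $C_N(u)=\frac q2\E[|g|^q]\sum_{k,\ell}u_ku_\ell(\sqrt N S_N)_{k,\ell}$ coincides with the paper's $\frac12\sum_{k\le\ell}[\sqrt N(\operatorname{Id}_m-F_N)]_{k,\ell}\,e_{k,\ell}(u)$ once the symmetry of $S_N$ is used. Your route buys a very concrete description of the correction term; the price is that the scalar Taylor remainder and the degenerate $U$-process must be controlled uniformly in $u$, which is delicate precisely because $t\mapsto|t|^q$ is not $C^2$ at $0$ when $q<2$ and not $C^1$ there when $q=1$.

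On this last point there are two genuine defects to repair. First, the bound $\bigl||t+h|^q-|t|^q-q|t|^{q-1}\sgn(t)h\bigr|\le C_q\min\{|h|^q,(|t|+|h|)^q\}$ fails for $q>2$: when $|h|\ll|t|$ the left-hand side is of order $|t|^{q-2}h^2$, which is not $O(|h|^q)$. For $q>2$ the correct estimate is $C_q(|t|+|h|)^{q-2}h^2$, which in your setting still yields $\|\mathrm{rem}_N\|_\infty=O_P(N^{-1/2})$ since $\frac1N\sum_i\|g_i\|^q=O_P(1)$. Second, the factor $u\mapsto|\langle g,u\rangle|^{q-2}\langle g,u\rangle$ appearing in your kernel is not Lipschitz in $u$ for $1\le q<2$: it is H\"older of exponent $q-1$ (with random H\"older constant of order $\|g\|^{q-1}$) when $1<q<2$, and for $q=1$ it degenerates to the discontinuous half-space indicator $u\mapsto\sgn\langle g,u\rangle$. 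Your appeal to "a standard maximal inequality for $U$-processes" must therefore rest on an entropy estimate for H\"older (or, at $q=1$, VC) kernel classes rather than on Lipschitz coverings; this is doable but not automatic, and it is exactly the regime where the paper itself is more cautious (noting that $\Phi$ is only a.e.\ differentiable in $M$ at $q=1$). Finally, a small notational slip: your displayed $\E[Z(u)Z(v)]$ has a single cross term with coefficient $-q\E[|g|^q]$, whereas the expansion of $\Cov\bigl(Y(u)-C(u),Y(v)-C(v)\bigr)$ has two cross terms, each with coefficient $-\frac q2\E[|g|^q]$; you have silently used that both equal $q\E[|g|^q]^2\langle u,v\rangle^2$, so the resulting numerical value matches the statement even though the intermediate formula is not literally the covariance expansion.
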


	\noindent We also state the corresponding functional MDP and LDP results.
	
	\begin{thm}[Functional MDP for sums over the Stiefel manifold]\label{thm: Functional MDP}
		Let $V_N=(v_1,\dots,v_N)\in\R^{m\times N}$ be uniformly distributed in $\mathbb{V}_{m,N}$ for each $N\ge m$. Let $q \in [1,2)$. Let $\beta_N$ be a sequence of real numbers with $\beta_N\to\infty$ and $\beta_N = o(\sqrt{N})$. Then the sequence 
		$$
		\Bigg( \frac{1}{\beta_N\sqrt{N}}\sum_{i=1}^N  \Bigg(\Big|\langle \sqrt{N}v_i, (\,\cdot\,)\rangle\Big|^q - \E\Big[|\langle \sqrt{N}v_i, (\,\cdot\,)\rangle|^q\Big]\Bigg)\Bigg)_{N\in\N}
		$$
		satisfies an MDP with speed $\beta_N^2$ on $(C(\mathbb{S}^{m-1}),\|\cdot\|_\infty)$ and good rate function given in Remark \ref{rmk: MDP rate function}. This remains true if we replace $\E[|\langle \sqrt{N}v_i, (\,\cdot\,)\rangle|^q]$ in the process with $\E[|g|^q], g\sim \mathcal{N}(0,1)$.
	\end{thm}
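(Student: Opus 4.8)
The strategy is to derive the functional moderate deviation principle from the i.i.d.\ Gaussian model in the same manner as the functional CLT, Theorem~\ref{thm: Functional CLT}, but with moderate‑deviation tools replacing the CLT; the covariance kernel of Theorem~\ref{thm: Functional CLT} will reappear as the quadratic form whose Legendre--Fenchel conjugate is the MDP rate function. Throughout, write $g,g_1,g_2,\dots$ for i.i.d.\ $\mathcal N(0,\mathrm{Id}_m)$ vectors, $G_N=[g_1\,|\,\cdots\,|\,g_N]$, and $c_q:=\E|g_1|^q$.

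\emph{Reduction to i.i.d.\ Gaussian columns, via conditioning.} By the polar decomposition of a Gaussian matrix, $G_N=(G_NG_N^*)^{1/2}V_N$ with $V_N:=(G_NG_N^*)^{-1/2}G_N\sim\Uni(\mathbb{V}_{m,N})$, the positive factor $(\tfrac1N G_NG_N^*)^{1/2}$ being independent of $V_N$; equivalently $\sqrt N\,V_N\overset{d}{=}\bigl(G_N\mid \tfrac1N G_NG_N^*=\mathrm{Id}_m\bigr)$. Thus the process in the statement is the i.i.d.\ Gaussian process $\widetilde S_N(u):=\tfrac{1}{\beta_N\sqrt N}\sum_{i=1}^N\bigl(|\langle g_i,u\rangle|^q-c_q\bigr)$ \emph{conditioned on} $\tfrac{1}{\sqrt N}\sum_{i=1}^N(g_ig_i^*-\mathrm{Id}_m)=0$. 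The positive factor cannot simply be discarded: it already injects a non‑negligible correction at the CLT level, namely the difference between the plain i.i.d.\ kernel $\E[|\langle g,u\rangle|^q\,|\langle g,v\rangle|^q]-c_q^2$ and the kernel of Theorem~\ref{thm: Functional CLT}, which is $\tfrac{q^2}{2}c_q^2\langle u,v\rangle^2$. I would therefore first prove a \emph{joint} functional MDP, at speed $\beta_N^2$, for $\bigl(\widetilde S_N(\,\cdot\,),\,\tfrac{1}{\beta_N\sqrt N}\sum_{i=1}^N(g_ig_i^*-\mathrm{Id}_m)\bigr)$ with values in $C(\Sm)\times\mathrm{Sym}_m$ (real symmetric $m\times m$ matrices), and then invoke a conditioning lemma for moderate deviations: since the second coordinate has minimal rate $0$ at $0$, the conditional rate of the first coordinate given that the second equals $0$ is obtained by freezing the second argument of the joint rate function at $0$, which for a quadratic rate function is a Schur complement. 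The result is the Legendre--Fenchel conjugate on $(C(\Sm))^*$ of $\Lambda\mapsto\tfrac12\,\Var(\langle\Lambda,Z\rangle)$, equivalently $\tfrac12$ times the squared norm of the reproducing‑kernel Hilbert space of the Gaussian process $Z$ of Theorem~\ref{thm: Functional CLT}; this is the good rate function of Remark~\ref{rmk: MDP rate function}. Finally, replacing $\E[|\langle\sqrt N v_i,(\,\cdot\,)\rangle|^q]$ by $c_q$ only shifts the process by the deterministic, $u$‑independent amount $\tfrac{\sqrt N}{\beta_N}\bigl(c_q-\E|\langle\sqrt N v_1,u\rangle|^q\bigr)=O\bigl((\beta_N\sqrt N)^{-1}\bigr)$ (the column law is rotation invariant, and $\E|\langle\sqrt N v_1,u\rangle|^q=c_q+O(1/N)$), which has no effect on the MDP.

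\emph{The joint i.i.d.\ functional MDP.} Here I would argue finite‑dimensionally and then pass to the limit. Fix $u_1,\dots,u_k\in\Sm$ and symmetric matrices $S_1,\dots,S_\ell$, and let $X_i\in\R^{k+\ell}$ collect the coordinates $|\langle g_i,u_j\rangle|^q-c_q$ and $\langle g_i,S_rg_i\rangle-\Tr S_r$; these are i.i.d.\ and centered. In the first block the tails satisfy $\P[|\langle g,u\rangle|^q>t]\asymp\exp(-\tfrac12 t^{2/q})$, \emph{lighter} than Gaussian precisely because $q<2$; combined with $\beta_N\to\infty$ and $\beta_N=o(\sqrt N)$ this yields the large‑jump negligibility $\tfrac{1}{\beta_N^2}\log\bigl(N\,\P[|X_1|>\delta\beta_N\sqrt N]\bigr)\to-\infty$ for all $\delta>0$ (one needs $\beta_N^{2/q}N^{1/q}/\beta_N^2\to\infty$, which holds iff $2/q>1$). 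A second‑order expansion of the logarithmic moment generating function gives $\tfrac{1}{\beta_N^2}\log\E\exp\bigl(\tfrac{\beta_N}{\sqrt N}\langle t,\sum_{i=1}^N X_i\rangle\bigr)\to\tfrac12\,t^{\top}\Sigma t$, where the $k\times k$ block of $\Sigma$ is the plain i.i.d.\ covariance $\bigl(\E[Z(u_i)Z(u_j)]+\tfrac{q^2}{2}c_q^2\langle u_i,u_j\rangle^2\bigr)_{i,j}$ and the off‑diagonal blocks are what produce the constraint correction after conditioning. The G\"artner--Ellis theorem then gives an MDP for the finite‑dimensional marginals with the Legendre‑transform rate, the Dawson--G\"artner projective‑limit principle upgrades it to an MDP on $\R^{\Sm}$, and to transport it to $(C(\Sm),\|\,\cdot\,\|_\infty)$ (and obtain goodness there) I would prove exponential tightness at speed $\beta_N^2$ in the sup norm by a chaining / Arzel\`a--Ascoli argument, using $\bigl||\langle g_i,u\rangle|^q-|\langle g_i,v\rangle|^q\bigr|\le q\,\|g_i\|^q\,|u-v|$ (valid for $q\ge1$) and the light tails of $\|g_i\|^q$ to keep the modulus of continuity of $\widetilde S_N$ uniformly small off an event of superexponentially small probability.

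\emph{The main obstacle.} The delicate step is the transfer from the i.i.d.\ Gaussian model to the Stiefel model: making the conditioning on a null set rigorous at the moderate‑deviation scale, uniformly over $\Sm$. A naive exponential‑equivalence attempt---replacing $\sqrt N\,v_i=(\tfrac1N G_NG_N^*)^{1/2}g_i$ by $g_i$ through the Lipschitz bound for $|\,\cdot\,|^q$---does not work: the error is of order $\beta_N^{-1}\sqrt N\,\|(\tfrac1N G_NG_N^*)^{1/2}-\mathrm{Id}_m\|$, which tends to $0$ in probability but whose deviations are of \emph{exactly} speed $\beta_N^2$ with a \emph{finite} rate, hence are not superexponentially negligible. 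One genuinely must retain the correction---which is the whole point of the joint/conditioned MDP above---and this ultimately rests on a ``local'' joint MDP for $\widetilde S_N$ and the empirical covariance $\tfrac1N\sum_{i=1}^N g_ig_i^*$ with convergence of logarithmic densities, uniformly in the finitely many test directions: the moderate‑deviation‑scale counterpart of the estimates behind Theorem~\ref{thm: Functional CLT}.
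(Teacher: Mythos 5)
Your diagnosis of the main technical point is exactly right: the Gram-matrix correction $(\tfrac1N G_NG_N^*)^{-1/2}-\operatorname{Id}_m$ is \emph{not} superexponentially negligible at speed $\beta_N^2$ and therefore cannot be discarded by a naive exponential-equivalence argument. However, the route you propose to handle it --- reframing the Stiefel process as an i.i.d.\ Gaussian process conditioned on the singular event $\{\tfrac1N G_NG_N^*=\operatorname{Id}_m\}$, proving a joint functional MDP, and then applying a ``conditioning lemma for moderate deviations'' to extract the Schur-complement rate --- is genuinely different from the paper's, and it contains a real gap. There is no off-the-shelf MDP conditioning lemma of the kind you invoke: conditioning a sequence of (non-Gaussian for finite $N$) random vectors on a measure-zero hyperplane and deducing an LDP for the conditional laws requires a \emph{local} LDP with convergence of log-densities, which you acknowledge but do not supply. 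That is not a minor technicality --- it is the entire difficulty you identified in the paragraph preceding it, restated as an unproven hypothesis.

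The paper sidesteps the conditioning issue by never conditioning at all. It works with the \emph{unconditional} distributional identity $\sqrt{N}v_i\overset{d}{=}(\tfrac1N G_NG_N^*)^{-1/2}g_i$ (the Gaussian factor in the polar decomposition is independent of the Stiefel factor, but one does not need to peel it off by conditioning --- one multiplies it back in). Then it Taylor-expands $\Phi(M,u,g):=|\langle Mg,u\rangle|^q$ in $M$ around $\operatorname{Id}_m$ with integral remainder, which turns the process into $X_N(u)+\sum_{k\le\ell}Y_N^{k,\ell}\,Z_N^{k,\ell}(u)$, where $X_N$ is the pure i.i.d.\ sum, $Y_N=\tfrac{\sqrt N}{\beta_N}(F_N-\operatorname{Id}_m)$ carries the Gram-matrix correction at the correct MDP scale, and $Z_N$ is an empirical mean that concentrates (at speed $N\gg\beta_N^2$) on a deterministic function $c_1$. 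The finite-dimensional marginals of $(X_N,Y_N)$ satisfy an MDP by the standard empirical-mean MDP in $\R^d$ (using $q<2$ for the moment-generating-function condition), the functional upgrade to $C(\Sm)\times\overline{S^m_+}$ is via Dawson--G\"artner together with exponential tightness, $(X_N,Y_N,Z_N)$ is then exponentially equivalent at speed $\beta_N^2$ to $(X_N,Y_N,c_1)$, the delta method handles the nonlinear map $M\mapsto M^{-1/2}$, and the contraction principle applied to the resulting linear functional $F$ produces the rate function of Remark~\ref{rmk: MDP rate function}. Your intuition that the final rate is the RKHS-norm-squared of a Gaussian process is a useful heuristic, and your large-jump and chaining considerations are sensible, but as written your argument would need the local-MDP/conditioning machinery to be developed from scratch, whereas the paper's contraction route is self-contained.
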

	
	\begin{thm}[Functional LDP for sums over the Stiefel manifold]\label{thm: Functional LDP}
		Let $V_N=(v_1,\dots,v_N)\in\R^{m\times N}$ be uniformly distributed in $\mathbb{V}_{m,N}$ for every $N\ge m$. Then, for every $q\in [1,2)$, the sequence 
		$$
		  \Bigg(\frac{1}{N}\sum_{i=1}^N  \Big|\langle \sqrt{N}v_i, (\,\cdot\,)\rangle\Big|^q \Bigg)_{N\in\N}
		$$
		satisfies an LDP with speed $N$ on $(C(\mathbb{S}^{m-1}),\|\,\cdot\,\|_\infty)$ and rate function $I^\prime_{LDP}:(C(\mathbb{S}^{m-1}),\|\,\cdot\,\|_\infty)\to[0,\infty]$ given by
		\begin{align*}
			I_{\mathrm{LDP}}^\prime(f) = \sup_{k\in\N} \, \inf \Big\{ I_{\mathrm{Stiefel}}(\mu) \,:\, (f^q(u_1),\ldots,f^q(u_k)) = F_k(\mu)\Big\},
		\end{align*}
		where $I_{\mathrm{Stiefel}}:\mathscr{M}_1(\R^m)\to [0,\infty]$ is the strictly convex rate function 
		$$
		I_{\mathrm{Stiefel}}(\nu) := \begin{cases}
			H(\nu|\gamma^{\otimes m}) + \frac{1}{2}\operatorname{Trace}\big(\operatorname{Id}_m - \mathscr{C}(\nu)\big) & : \quad \operatorname{Id}_m - \mathscr{C}(\nu) \text{ is positive semidefinite },\\
			+\infty & : \quad\text{ otherwise. }
		\end{cases}
		$$
	\end{thm}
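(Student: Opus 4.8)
The plan is to express the random functions through the empirical distribution of the rescaled columns of the Stiefel matrix, to obtain the finite-dimensional large deviation principles by applying the contraction principle to a large deviation principle for that empirical distribution, and to assemble the functional statement by the Dawson--G\"artner theorem, which here is essentially free because the functions are deterministically confined to a single compact subset of $C(\mathbb{S}^{m-1})$.

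First I would write $S_N(u):=\frac1N\sum_{i=1}^N|\langle\sqrt N v_i,u\rangle|^q$ and $\mu_N:=\frac1N\sum_{i=1}^N\delta_{\sqrt N v_i}\in\mathscr{M}_1(\R^m)$, so that $S_N(u)=\int_{\R^m}|\langle x,u\rangle|^q\dint\mu_N(x)$ and $\mathscr{C}(\mu_N)=V_NV_N^*=\operatorname{Id}_m$ for every realization. In the Gaussian model $V_N\overset{d}{=}(GG^*)^{-1/2}G$ with $G=(g_1,\dots,g_N)$, $g_i\sim\mathcal N(0,\operatorname{Id}_m)$ i.i.d., one has $\sqrt N v_i\overset{d}{=}\mathscr{C}(\hat\mu_N)^{-1/2}g_i$ with $\hat\mu_N=\frac1N\sum_i\delta_{g_i}$, hence $\mu_N\overset{d}{=}\bigl(\mathscr{C}(\hat\mu_N)^{-1/2}\bigr)_*\hat\mu_N$. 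Transporting Sanov's theorem for $\hat\mu_N$ (rate $H(\cdot\,|\,\gamma^{\otimes m})$) through the rescaling map $\nu\mapsto(\mathscr{C}(\nu)^{-1/2})_*\nu$ — whose only failure of continuity is the escape of mass to infinity that keeps $\mathscr{C}(\mu_N)=\operatorname{Id}_m$ at a linear cost of $\tfrac12$ per unit of variance carried off, which is exactly what produces the ``$+\tfrac12\operatorname{Trace}(\operatorname{Id}_m-\mathscr{C}(\nu))$'' term and the semidefiniteness constraint — yields the large deviation principle for $\mu_N$ with speed $N$ and good rate function $I_{\mathrm{Stiefel}}$. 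This Stiefel-column large deviation principle is the one substantial input.

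Next, for a finite set $\{u_1,\dots,u_k\}\subseteq\mathbb{S}^{m-1}\cap\Q^m$ I would apply the contraction principle to $\Phi_k:\nu\mapsto\bigl(\int|\langle x,u_j\rangle|^q\dint\nu\bigr)_{1\le j\le k}$. On the effective domain of $I_{\mathrm{Stiefel}}$ one has $\operatorname{Id}_m-\mathscr{C}(\nu)$ positive semidefinite, so — because $q<2$ — the relevant $q$-th moments are finite and uniformly bounded on level sets, which makes $\Phi_k$ continuous there in the topology carrying the Stiefel large deviation principle; if that topology is too weak for this, one instead contracts with the bounded continuous truncations $\min(|\langle x,u_j\rangle|^q,R)$ and removes the cutoff by an exponentially good approximation, which works precisely because $q<2$ forces $\frac1N\sum_i|g_i|^q\mathbf 1\{|g_i|>R\}$ to have a superlinear Cram\'er rate. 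Either way, $(S_N(u_1),\dots,S_N(u_k))=\Phi_k(\mu_N)$ satisfies a large deviation principle with rate $y\mapsto\inf\{I_{\mathrm{Stiefel}}(\mu):\Phi_k(\mu)=y\}$, which, after unwinding the definition of $F_k$ and matching coordinates, is the coordinate projection of the map $f\mapsto\inf\{I_{\mathrm{Stiefel}}(\mu):(f^q(u_1),\dots,f^q(u_k))=F_k(\mu)\}$.

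For the passage to $(C(\mathbb{S}^{m-1}),\|\cdot\|_\infty)$ the key point is that exponential tightness is deterministic: Cauchy--Schwarz and the power-mean inequality ($q\le2$) give $0\le S_N(u)\le\bigl(\frac1N\sum_i|\sqrt N v_i|^2\bigr)^{q/2}=m^{q/2}$ since $\sum_i|v_i|^2=\operatorname{Trace}(V_NV_N^*)=m$, and the elementary estimate $\bigl||a|^q-|b|^q\bigr|\le q\max(|a|,|b|)^{q-1}|a-b|$ together with Cauchy--Schwarz gives $|S_N(u)-S_N(v)|\le q\,m^{q/2}|u-v|$, so every $S_N$ lies in the fixed Arzel\`a--Ascoli-compact set $\mathcal K:=\{f\in C(\mathbb{S}^{m-1}):0\le f\le m^{q/2},\ \operatorname{Lip}(f)\le q\,m^{q/2}\}$. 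Since evaluations at $\mathbb{S}^{m-1}\cap\Q^m$ separate points of $\mathcal K$, the set $\mathcal K$ embeds homeomorphically into $\R^{\mathbb{S}^{m-1}\cap\Q^m}$ with the product topology, and combining the finite-dimensional large deviation principles above with the Dawson--G\"artner theorem and pulling back to $\mathcal K\subseteq C(\mathbb{S}^{m-1})$ yields the asserted large deviation principle with rate function $f\mapsto\sup_{k\in\N}\inf\{I_{\mathrm{Stiefel}}(\mu):(f^q(u_1),\dots,f^q(u_k))=F_k(\mu)\}=I^\prime_{LDP}(f)$, which is automatically good, being $+\infty$ off the compact set $\mathcal K$. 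The main obstacle is the interplay in the middle two steps: the column rescaling is governed by the empirical second-moment matrix, whose coordinates are only exponentially integrable for small parameters, so one must either set up the Stiefel-column large deviation principle in a topology on $\mathscr{M}_1(\R^m)$ strong enough that $\nu\mapsto\int|\langle x,u\rangle|^q\dint\nu$ is continuous yet compatible with the escape-of-mass correction and the semidefiniteness constraint, or push everything through the truncation route, and in either case verify that the variational rate function produced by the contraction is exactly $I^\prime_{LDP}$.
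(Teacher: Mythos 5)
Your proposal is correct and follows the same architecture as the paper: LDP for the empirical measure $\mathscr L_N=\frac1N\sum_i\delta_{\sqrt Nv_i}$ with rate $I_{\mathrm{Stiefel}}$, contraction to get the finite-dimensional marginals, Dawson--G\"artner assembly, and an exponential tightness step to lift the weak LDP to a full one on $(C(\Sm),\|\cdot\|_\infty)$. Two points of comparison are worth recording. First, you treat the Stiefel-column LDP as something to be re-derived from Sanov's theorem by transporting through $\nu\mapsto(\mathscr C(\nu)^{-1/2})_*\nu$ and accounting for escape of mass; this is a heuristic, and the continuity failure of the rescaling map is precisely the subtle point. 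The paper avoids this by citing the result directly from Kim--Liao--Ramanan, and it also fixes the topology to be $q$-Wasserstein on $\mathscr M_1(\R^m)$, under which the contraction maps $\Phi_k$ are continuous without any truncation argument; you hedge on the topology and offer a truncation route as a backup, but the $q$-Wasserstein choice is the clean resolution and is built into the cited lemma. Second, and more interestingly, your exponential tightness argument is genuinely simpler and arguably better than the paper's: you observe that $\operatorname{Trace}(V_NV_N^*)=m$ forces $\frac1N\sum_i\|\sqrt Nv_i\|^q\le m^{q/2}$ by the power-mean inequality ($q<2$), so that $S_N$ lies \emph{deterministically} in a fixed Arzel\`a--Ascoli compact set of uniformly bounded, uniformly Lipschitz functions, and exponential tightness is then free. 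The paper instead derives bounds on the sup-norm and Lipschitz constant of $X_N$ in terms of $\int\|x\|^q\,\mathscr L_N(\dint x)$ and invokes the empirical-measure LDP a second time to control these integrals at exponential scale; both work, but your observation makes the last step of the proof almost trivial and is a nice simplification.
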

	
	The following provides an interpretation for all of the above convergence results. Let $E_N\in \mathbb{G}_{m,N}$ be distributed according to the uniform distribution $\mu_{m,N}$ and let $\mathbb{B}_2^m(E_N)$ be the $m$-dimensional Euclidean unit ball in $E_N$. Then we have the following almost sure convergence in the Hausdorff distance (for background information on the space of convex bodies and the notion of Hausdorff distance, we refer to \cite[Chapter 1.8]{schneider2014convex}): 
	\begin{align}\label{eq: Hausdorff conv to ball proj}
		d_H\Big(N^{\frac{1}{p}-\frac{1}{2}}(\mathbb{B}_p^N \big| E_N), \frac{\sqrt{2}}{\pi^{\frac{1}{2q}}} \Gamma\big(\frac{q+1}{2}\big)^{1/q}\mathbb{B}_2^m(E_N)\Big) \xrightarrow{a.s.} 0
	\end{align}
	for $p\in (1,\infty]$ and
	\begin{align}\label{eq: Hausdorff conv to ball sec}
		d_H\Big(N^{\frac{1}{p}-\frac{1}{2}}(\mathbb{B}_p^N\cap E_N), \frac{\pi^{\frac{1}{2p}}}{\sqrt{2}}\frac{1}{\Gamma(\frac{p+1}{2})^{1/p}}\mathbb{B}_2^m(E_N)\Big) \xrightarrow{a.s.} 0,
	\end{align}
	for $p\in [1,\infty)$. These limit theorems show that projections and sections of appropriately rescaled $\ell_p^N$-balls in high dimensions are close to Euclidean balls of appropriate radii within the corresponding subspaces. The result for the volume of sections was already shown in \cite[Section 5]{koldobsky2000average} and the case of projections and $p=\infty$ can be found in \cite[Theorem 1.1]{kabluchko2021new}. This convergence can also be deduced using the Dvoretzky--Milman theorem \cite{Figiel1977dim} as is shown, for instance, in \cite[Corollary 7.24]{brannan2021alice}). To see this, one needs to combine the value for the Dvoretzky dimension for $\ell_p^N$-balls (see \cite[Example 3.1]{Figiel1977dim} or \cite[Theorem 7.31 and Equation (5.63)]{brannan2021alice}) with the Borel--Cantelli lemma exactly as in \cite[Remark 1.2]{kabluchko2021new}. We provide an alternative self-contained proof for this convergence in Section \ref{sec: proofs} that is aligned with our methods of proof used throughout this work. In this light, our main results can be seen as characterizations of the above convergences in different ways, including the speed of convergence and fluctuations. 

	\section{Preliminaries and more notation}\label{sec:prelim and notation}
	%%%%%%%%%%%%%%%%%%%%%%%%%%%%%%%%%%%%%%%
	%%%%%%%%%%%%%%%%%%%%%%%%%%%%%%%%%%%%%%%
	
	In this section, we provide a number of definitions, notation, and standard results that appear throughout the paper.
	
	%%%%%%%%%%%%%%%%%%%%%%%%%%%
	\subsection{Notation related to $\ell_p$-spaces}
	%%%%%%%%%%%%%%%%%%%%%%%%%%%

  For $N\in\mathbb{N}$, $x:=(x_1,\ldots,x_N)\in\R^N$ and $p\in[1,\infty]$, we denote the $\ell_p^N$-norm of $x$ by 
	$$
	\|x\|_p:= \begin{cases}\left(\sum_{i=1}^N \left|x_i\right|^p\right)^{1 / p} & : \quad p \in[1, \infty), \\ \max \left\{\left|x_1\right|,\ldots, \left|x_N \right| \right\} & : \quad p=\infty .\end{cases}
	$$
	The corresponding $\ell_p^N$-spaces are defined as $\ell_p^N := (\R^N, \|\,\cdot\,\|_p)$ and the closed unit ball in $\ell_p^N$ is
	$$
	\mathbb{B}_p^{N}:= \Big\{x\in\R^N \,:\, \|x\|_p\le 1\Big\}.
	$$
	If $p=2$, we typically write $\|\,\cdot\,\| := \|\,\cdot\,\|_2$ and $\mathbb{S}^{N-1}:=\big\{x\in\R^N\,:\,\|x\|=1\big\}$ for the Euclidean unit sphere in $\R^N$. We denote by $q$ the H\"older conjugate of $p$, which is defined by the relation
	$$
	\frac{1}{p}+\frac{1}{q} = 1
	$$
	and where we use the convention that $\frac{1}{\infty} := 0$, implying that $q=1$ is the Hölder conjugate of $p=\infty$.
	
	%%%%%%%%%%%%%%%%%%%%%%%%%%%%%%%%%%%%%%%%%%
	\subsection{Asymptotic notation and notation from probability theory}
	%%%%%%%%%%%%%%%%%%%%%%%%%%%%%%%%%%%%%%%%%% 
	
	For two sequences $(\alpha_N)_{N\in\N}$ and $(\beta_N)_{N\in\N}$ of real numbers, we write $\alpha_N \sim \beta_N$ if $\frac{\alpha_N}{\beta_N}\to 1$, as $N\to\infty$. For a random element $X$ and a probability measure $\mu$, the notation $X\sim \mu$ means that $X$ is distributed according to $\mu$. Further, we denote $\overset{d}{=}$ and $\xrightarrow{d}$ for equality and convergence in distribution, respectively, and we indicate by $\mathcal{N}(0,\Sigma)$ the centred Gaussian distribution with covariance matrix $\Sigma$ and by $\mathcal{N}(0,\operatorname{Id}_m)$ the standard Gaussian distribution in $\R^m$ where $\operatorname{Id}_m$ is the $m\times m$ identity matrix. The symbol $\sigma$ will always denote  the normalized spherical Lebesgue measure on the Euclidean unit sphere. 
	
	%%%%%%%%%%%%%%%%%%%%%%%%%%%%%%%%%
	\subsection{Brackground from the theory of large deviations}
	%%%%%%%%%%%%%%%%%%%%%%%%%%%%%%%%%
	
	For a topological space $\mathbb{X}$, a function $I:\mathbb{X}\to[0,\infty]$ is said to be a rate function if all sublevel sets are closed, i.e., if  $\{x\in\mathbb{X}: f(x)\le y\}$ is closed in $\mathbb{X}$ for every $y$. A rate function is called good if the sublevel sets are additionally compact. A large deviation principle is defined as follows.
	\begin{df}\label{def: LDP}
		Let $(X_N)_{N\in\N}$ be a sequence of random variables taking values in a topological space $\mathbb{X}$. The sequence is said to satisfy a \emph{large deviation principle (LDP)} with \emph{speed} $s_N$, where $s_N\to\infty$, and \emph{rate function} $I:\mathbb{X}\to [0,\infty]$ if and only if
		\begin{align}
			\liminf_{N\to\infty} \frac{1}{s_N}\log \P[X_N \in O] &\ge -\inf_{x\in O}I(x)\quad\text{ for all open } O\subseteq\mathbb{X},\label{LDP lower bound}\\ 
			\limsup_{N\to\infty} \frac{1}{s_N}\log \P[X_N\in C] &\le -\inf_{x\in C}I(x)\quad\text{ for all closed } C\subseteq\mathbb{X}\label{LDP upper bound}.
		\end{align}
		Let $\beta_N\to\infty$ with $\beta_N = o(\sqrt{N})$. We say that the sequence $(\beta_N X_N)_{N\in\N}$ satisfies a \emph{moderate deviation principle} (MDP) if and only if it satisfies an LDP with speed $\beta_N^2$.	
		\end{df}
	
	We say that $(X_N)_{N\in\N}$ satisfies a \emph{weak LDP} if it satisfies the definition of an LDP with the exception that the upper bound (Equation (\ref{LDP upper bound})) only holds for compact sets instead of all closed sets. A closely related concept is that of exponential tightness. \\
	
	%The following is a simplified version of Theorem 17 in \cite{kelley2017general}, Chapter 7. It follows since every metric space is a uniform space and also a regular space. \textcolor{red}{Should Arzel\`a-Ascoli be stated? Should be well known.
		%\begin{proposition}[Arzel\`a-Ascoli]\label{prop: arzela acsoli}
		%	Let $\mathcal{C}$ be a family of continuous functions on a locally compact metric space to a metric space and let $\mathcal{C}$ be equipped with the topology of uniform convergence on compacta. Then, $\mathcal{F}\subset\mathcal{C}$ is compact in $\mathcal{C}$ if and only if
		%	\begin{itemize}
			%		\item $\mathcal{F}$ is closed,
			%		\item $\{f(x)|\enskip f\in\mathcal{F}\}$ is relatively compact for each $x$,
			%		\item $\mathcal{F}$ is equicontinuous.
			%	\end{itemize}
		%\end{proposition}
		
		\begin{df}
			A sequence of random variables $(X_N)_{N\in\N}$ on a topological space $\mathbb{X}$ is \emph{exponentially tight} with respect to the speed $s_N$ if and only if for every $a>0$ there exists a compact set $K\subseteq \mathbb{X}$ such that
			$$
			\limsup_{N\to\infty} \frac{1}{s_N} \log \P[X_N\in \mathbb{X}\setminus K] \le -a.
			$$
		\end{df}
		If nothing else is mentioned, we assume $s_N = N$. An important property of exponential tightness is that a sequence of random variables satisfies an LDP with speed $s_N$ and good rate function if and only if it is exponentially tight with respect to $s_N$ and it satisfies the corresponding weak LDP (\cite[Lemma 1.2.18]{dembo2009techniques}). Tuples are exponentially tight exactly if each component is exponentially tight as the next lemma shows. 
		
		\begin{lemma}\label{lem: exp tight produkt space}
			Assume that $(X_N)_{N\in\N}$ and $(Y_N)_{N\in\N}$ are exponentially tight sequences of random variables on the metric spaces $\mathbb{X}$ and $\mathbb{Y}$, respectively. Then, the sequence $(X_N, Y_N)_{N\in\N}$ is exponentially tight on $\mathbb{X}\times\mathbb{Y}$ equipped with the product topology.
		\end{lemma}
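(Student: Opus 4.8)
The plan is to reduce the joint statement to the one-dimensional notion of exponential tightness and then glue the two compact sets together. First I would recall the relevant definition: $(X_N)_{N\in\N}$ is exponentially tight on $\mathbb X$ (with respect to speed $s_N$, which we take to be the common speed here) if for every $a>0$ there is a compact set $K_a\subseteq\mathbb X$ with $\limsup_{N\to\infty}\frac{1}{s_N}\log\P[X_N\notin K_a]\le -a$, and similarly for $(Y_N)_{N\in\N}$ on $\mathbb Y$ with compact sets $L_a\subseteq\mathbb Y$. The natural candidate compact set in the product is $K\times L$ for suitably chosen $K,L$, which is compact in the product topology by Tychonoff (or, since $\mathbb X,\mathbb Y$ are metric spaces, by the elementary fact that a product of two compact metric spaces is compact).

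The key step is the union-bound estimate. Fix $a>0$. Since we want $\limsup\frac{1}{s_N}\log\P[(X_N,Y_N)\notin K\times L]\le -a$, and since
\[
  \{(X_N,Y_N)\notin K\times L\} \subseteq \{X_N\notin K\}\cup\{Y_N\notin L\},
\]
we get $\P[(X_N,Y_N)\notin K\times L]\le \P[X_N\notin K] + \P[Y_N\notin L]$. Applying exponential tightness of each component at level $a$ (i.e. choosing $K=K_a$ and $L=L_a$), each of the two probabilities on the right is at most $e^{-s_N(a-o(1))}$ in the $\limsup$ sense. The elementary large-deviation fact that $\limsup\frac{1}{s_N}\log(\alpha_N+\beta_N)=\max\{\limsup\frac{1}{s_N}\log\alpha_N,\ \limsup\frac{1}{s_N}\log\beta_N\}$ for nonnegative sequences then yields
\[
  \limsup_{N\to\infty}\frac{1}{s_N}\log\P\big[(X_N,Y_N)\notin K_a\times L_a\big] \le \max\{-a,-a\} = -a,
\]
which is exactly the required bound. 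Since $a>0$ was arbitrary, $(X_N,Y_N)_{N\in\N}$ is exponentially tight on $\mathbb X\times\mathbb Y$.

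There is essentially no main obstacle here — the statement is a soft, purely topological-probabilistic fact, and the only things to be careful about are (a) making sure we pick the component compact sets at the same level $a$ so that the union bound gives $-a$ rather than something weaker, and (b) invoking that the product of the two compact sets is compact in the product topology, which is immediate in the metric setting assumed in the lemma. If one wanted tightness with respect to speed $s_N$ for a general diverging sequence rather than $s_N=N$, the same argument goes through verbatim since $s_N$ never enters except through the normalization in the $\limsup$. I would present the proof in three short lines: recall the definition, write the inclusion of events, and apply the $\max$-rule for exponential rates.
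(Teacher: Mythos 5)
Your proposal is correct and matches the paper's proof essentially verbatim: the paper likewise takes the product $K_1\times K_2$ of the two component compact sets (compact by Tychonoff), uses the union bound $\P[(X_N,Y_N)\notin K_1\times K_2]\le\P[X_N\notin K_1]+\P[Y_N\notin K_2]$, and then invokes the $\max$-rule for exponential rates (cited as \cite[Lemma 1.2.15]{dembo2009techniques}) to conclude. No gaps and no meaningful difference in approach.
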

		\begin{proof} Let $\epsilon>0$. By assumption there exist compact sets $K_1\subseteq \mathbb{X}$ and $K_2 \subseteq \mathbb{Y}$ such that 
			\begin{align*}
				\limsup_{N\to\infty}\frac{1}{s_N}\log \P\big[X_N\in K_1^c\big]&\le -\epsilon,\\
				\limsup_{N\to\infty}\frac{1}{s_N}\log \P\big[Y_N\in K_2^c\big]&\le -\epsilon.
			\end{align*}
			Now, define the set $K = K_1\times K_2 \subseteq \mathbb{X}\times \mathbb{Y}$, which is compact in the product topology (by Tychonoff's theorem, see \cite{Chernoff1992}). We have
			\begin{align*}
				&\limsup_{N\to\infty}\frac{1}{s_N}\log \P\big[(X_N,Y_N) \in K^c\big]\\
				&\le \limsup_{N\to\infty}\frac{1}{s_N}\log\Big( \P\big[(X_N,Y_N) \in (K_1^c, \mathbb{Y})\big] + \P\big[(X_N,Y_N) \in (\mathbb{X}, K_2^c)\big]\Big)\cr
				&= \max\Bigg\{\limsup_{N\to\infty}\frac{1}{s_N}\log \P\big[(X_N,Y_N) \in (K_1^c, \mathbb{Y})\big],\, \limsup_{N\to\infty}\frac{1}{s_N}\log \P\big[(X_N,Y_N) \in (\mathbb{X}, K_2^c)\big] \Bigg\} \cr
				&\le -\epsilon,
			\end{align*}
			by Proposition \cite[Lemma 1.2.15]{dembo2009techniques}, which proves exponential tightness.
		\end{proof}
		
		We present a method that can be used to transfer an LDP for one sequence to another sequence if they are exponentially equivalent.
		
		\begin{proposition}[Exponential equivalence, \text{\cite[Theorem 4.2.13]{dembo2009techniques}}]
			Assume $(X_N)_{N\in\N}$ and $(Y_N)_{N\in\N}$ are sequences of random variables on the metric space $\mathbb{X}$ which is equipped with the metric $d$ and assume that $(X_N)_{N\in\N}$ satisfies an LDP with speed $s_N$ and good rate function $I$. If $(X_N)_{N\in\N}$ and $(Y_N)_{N\in\N}$ are \emph{exponentially equivalent}, i.e.,
			$$
			\limsup_{N\to\infty}\frac{1}{s_N}\log \P\big[d(X_N, Y_N)>\epsilon\big] = -\infty
			$$
			for every $\epsilon>0$, then $(Y_N)_{N\in\N}$ satisfies an LDP with the same speed and the same rate function. 
		\end{proposition}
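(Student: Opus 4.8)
The plan is to verify the large deviation upper and lower bounds for $(Y_N)_{N\in\N}$ separately, in each case using exponential equivalence to trade an event involving $Y_N$ for the corresponding event for $X_N$ at the cost of an error whose probability decays faster than $e^{-as_N}$ for every $a>0$ and is therefore invisible on the scale $\frac{1}{s_N}\log(\,\cdot\,)$.

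For the upper bound I would fix a closed set $C\subseteq\mathbb{X}$ and $\delta>0$, write $C^\delta:=\{x\in\mathbb{X}:\inf_{y\in C}d(x,y)\le\delta\}$ for the closed $\delta$-neighbourhood of $C$, and start from the inclusion $\{Y_N\in C\}\subseteq\{X_N\in C^\delta\}\cup\{d(X_N,Y_N)>\delta\}$. Combining this with the elementary identity $\limsup_N\frac{1}{s_N}\log(a_N+b_N)=\max\{\limsup_N\frac{1}{s_N}\log a_N,\limsup_N\frac{1}{s_N}\log b_N\}$ for nonnegative $a_N,b_N$ (\cite[Lemma 1.2.15]{dembo2009techniques}), the LDP upper bound for $(X_N)$ on the closed set $C^\delta$, and exponential equivalence, one obtains
$$
\limsup_{N\to\infty}\frac{1}{s_N}\log\P[Y_N\in C]\ \le\ -\inf_{x\in C^\delta}I(x).
$$
The remaining step is to let $\delta\downarrow 0$ and check that $\inf_{x\in C^\delta}I(x)\uparrow\inf_{x\in C}I(x)$; this is the point at which goodness of $I$ is indispensable. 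Here I would argue that $\delta\mapsto\inf_{C^\delta}I$ is nondecreasing as $\delta\downarrow 0$ with some limit $L\le\inf_C I$, and that, if $L<\infty$, one can choose $x_\delta\in C^\delta$ with $I(x_\delta)\le L+1$ for all small $\delta$, so that the $x_\delta$ all lie in the compact sublevel set $\{I\le L+1\}$; passing to a convergent subsequence $x_{\delta_k}\to x^*$, the limit satisfies $x^*\in C$ (since $C$ is closed and $\inf_{y\in C}d(x_{\delta_k},y)\le\delta_k\to 0$) and $I(x^*)\le\liminf_k I(x_{\delta_k})\le L$ by lower semicontinuity, whence $\inf_C I\le L$ and the two limits agree.

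For the lower bound I would fix an open set $O\subseteq\mathbb{X}$ and show $\liminf_N\frac{1}{s_N}\log\P[Y_N\in O]\ge -I(x)$ for an arbitrary $x\in O$ with $I(x)<\infty$ (the case $I(x)=\infty$ being vacuous). Choosing $\delta>0$ so small that the open ball $B(x,2\delta):=\{y\in\mathbb{X}:d(x,y)<2\delta\}$ is contained in $O$, the triangle inequality gives the inclusion $\{X_N\in B(x,\delta)\}\cap\{d(X_N,Y_N)<\delta\}\subseteq\{Y_N\in O\}$, hence $\P[Y_N\in O]\ge\P[X_N\in B(x,\delta)]-\P[d(X_N,Y_N)\ge\delta]$. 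Since the LDP lower bound for $(X_N)$ gives $\liminf_N\frac{1}{s_N}\log\P[X_N\in B(x,\delta)]\ge-\inf_{y\in B(x,\delta)}I(y)\ge-I(x)>-\infty$ while $\frac{1}{s_N}\log\P[d(X_N,Y_N)\ge\delta]\to-\infty$, the subtracted term is exponentially negligible relative to the first one, so $\liminf_N\frac{1}{s_N}\log\P[Y_N\in O]\ge-I(x)$; taking the supremum over $x\in O$ finishes the lower bound.

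I expect the one genuinely delicate point to be the passage $\delta\downarrow 0$ in the upper bound: without goodness of $I$ the argument only produces a weak LDP for $(Y_N)$, and it is exactly the compactness of the sublevel sets of $I$ that upgrades $\inf_{C^\delta}I$ to $\inf_C I$ for an \emph{arbitrary} closed set $C$. Everything else amounts to routine bookkeeping with $\frac{1}{s_N}\log$ of sums and differences of probabilities; in fact the whole argument is the classical one of Dembo and Zeitouni \cite[Theorem 4.2.13]{dembo2009techniques}, so in the write-up I would simply invoke that reference and include the above sketch only for completeness.
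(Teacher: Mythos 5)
The paper does not prove this proposition; it is quoted verbatim as a standard result with a citation to Dembo and Zeitouni, so there is no in-paper argument to compare against. Your sketch is a correct rendering of the classical proof: the upper-bound inclusion $\{Y_N\in C\}\subseteq\{X_N\in C^\delta\}\cup\{d(X_N,Y_N)>\delta\}$ with the $\max$ identity for $\limsup\frac{1}{s_N}\log$ of sums, the lower-bound inclusion $\{X_N\in B(x,\delta)\}\cap\{d(X_N,Y_N)<\delta\}\subseteq\{Y_N\in O\}$, and — the only nontrivial step — the passage $\delta\downarrow 0$ via compactness of sublevel sets and lower semicontinuity of $I$ are all as in the reference. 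You also correctly isolate goodness of $I$ as the ingredient that upgrades $\inf_{C^\delta}I$ to $\inf_C I$; without it one only gets a weak LDP for $(Y_N)$. Given that the paper itself merely cites this result, your closing remark that you would do the same in a write-up is exactly right.
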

		
		\begin{comment}
			The next tool is often combined with the contraction principle. Even though the result was proved in \cite{alonso2018large}, Proposition 2.6, for Euclidean spaces, the exact proof also works for Polish spaces.
			\begin{proposition}\label{prop: verklebeprinzip}
				Assume $(X_N)_{N\in\N}$ and $(Y_N)_{N\in\N}$ satisfy an LDP with speed $s_N$ and good rate functions $I$ and $J$, respectively, where $X_N$ takes values in the Polish space $\mathbb{X}$ and $Y_N$ takes values in the Polish space $\mathbb{Y}$ for every $N$. Furthermore, let the sequences be independent. Then, the sequence $(X_N,Y_N)_{N\in\N}$ satisfies an LDP on $\mathbb{X}\times\mathbb{Y}$ (in the product topology) with speed $s_N$ and good rate function 
				$$
				h:\mathbb{X}\times\mathbb{Y}\to [0,\infty], \quad h(x,y) = I(x) + J(y).
				$$
			\end{proposition}
		\end{comment}
		The following is a standard tool that allows us to continuously transform an LDP into another.
		
		\begin{proposition}[Contraction principle, \text{\cite[Theorem 4.2.1]{dembo2009techniques}}]\label{prop: contraction principle}
			Let $f:\mathbb{X}\to\mathbb{Y}$ be a continuous function and $\mathbb{X}, \mathbb{Y}$ be Hausdorff topological spaces. Let $(X_{N})_{N\in\N}$ be a sequence of random variables that satisfies an LDP on $\mathbb{X}$ with speed $s_N$ and rate function $I$. Then, the sequence $(f(X_N))_{N\in\N}$ satisfies an LDP on $\mathbb{Y}$ with speed $s_N$ and rate function $J:\mathbb{Y}\to [0,\infty]$,
			$$J(y) = \inf_{x\in f^{-1}(\{y\})}I(x).$$
		\end{proposition}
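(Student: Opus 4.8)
The plan is to transfer the large deviation bounds for $(X_N)_{N\in\N}$ to $(f(X_N))_{N\in\N}$ by pulling test sets back through $f$, and to verify separately that the induced functional $J$ is a good rate function. Throughout I assume, as in \cite[Theorem 4.2.1]{dembo2009techniques}, that $I$ is a \emph{good} rate function; goodness is exactly what guarantees that $J$ is again a (lower semicontinuous, in fact good) rate function.

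\textbf{Step 1: $J$ is a good rate function.} If $f^{-1}(\{y\})=\emptyset$ then $J(y)=+\infty$, so only the sublevel sets matter. Fix $\alpha\ge 0$. For $y$ with $J(y)=c\le\alpha$, the sets $\{I\le c+1/n\}\cap f^{-1}(\{y\})$ are nonempty for large $n$, nested, and closed (the first factor because $I$ is a rate function, the second because $\{y\}$ is closed in the Hausdorff space $\mathbb{Y}$ and $f$ is continuous), and all lie in the compact set $\{I\le c+1\}$; hence their intersection is nonempty and the infimum defining $J(y)$ is attained. Consequently
\[
\{y\in\mathbb{Y}\,:\,J(y)\le\alpha\} = f\big(\{x\in\mathbb{X}\,:\,I(x)\le\alpha\}\big),
\]
which is the continuous image of a compact set, hence compact and, $\mathbb{Y}$ being Hausdorff, closed. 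Thus $J$ is a good rate function.

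\textbf{Step 2: the LDP inequalities.} The elementary identity $f^{-1}(A)=\bigcup_{y\in A}f^{-1}(\{y\})$ gives, for any $A\subseteq\mathbb{Y}$,
\[
\inf_{x\in f^{-1}(A)}I(x) = \inf_{y\in A}\,\inf_{x\in f^{-1}(\{y\})}I(x) = \inf_{y\in A}J(y).
\]
For closed $C\subseteq\mathbb{Y}$, $f^{-1}(C)$ is closed by continuity, so \eqref{LDP upper bound} applied to $(X_N)$ yields
\[
\limsup_{N\to\infty}\frac{1}{s_N}\log\P[f(X_N)\in C] = \limsup_{N\to\infty}\frac{1}{s_N}\log\P[X_N\in f^{-1}(C)] \le -\inf_{y\in C}J(y).
\]
Similarly, for open $O\subseteq\mathbb{Y}$, $f^{-1}(O)$ is open, so \eqref{LDP lower bound} for $(X_N)$ gives $\liminf_{N\to\infty}\frac{1}{s_N}\log\P[f(X_N)\in O]\ge-\inf_{y\in O}J(y)$. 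These are precisely the two defining inequalities of an LDP for $(f(X_N))_{N\in\N}$ on $\mathbb{Y}$ with speed $s_N$ and rate function $J$.

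I do not anticipate a genuine obstacle: this is a soft topological transfer. The one point needing care is Step 1 — that $J$ has closed sublevel sets — which really does use the compactness of the sublevel sets of $I$ (goodness), together with the facts that continuous images of compact sets are compact and that compact subsets of Hausdorff spaces are closed; without goodness one only gets $\{J\le\alpha\}=\bigcap_{\epsilon>0}f(\{I\le\alpha+\epsilon\})$, which need not be closed.
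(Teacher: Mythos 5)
Your proof is correct and is the standard proof of the contraction principle, essentially as found in Dembo--Zeitouni (the paper itself only cites \cite[Theorem 4.2.1]{dembo2009techniques} without reproducing the argument). You also rightly point out that the statement tacitly requires $I$ to be a \emph{good} rate function — the paper omits this hypothesis, but it is present in the cited reference and is exactly what your Step 1 needs to conclude that the infimum defining $J(y)$ is attained and that $\{J\le\alpha\}=f(\{I\le\alpha\})$ is compact.
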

		
		\noindent By convention, we set $J(y) = \infty$, whenever $f^{-1}(\{y\}) = \emptyset$. If the context is clear, we may write $f^{-1}(y)$ instead of $f^{-1}(\{y\})$.\\
		
		One prominent results in large deviation theory is Cram\'er's theorem. Similar to other limit theorems as the central limit theorems or the laws of large numbers, it characterizes the behavior of means of i.i.d.\- random variables. 
		
		\begin{proposition}[Cram\'er's theorem, \text{\cite[Chapter 24]{kallenberg1997foundations}}]\label{thm: cramer}
			Let $(X_N)_{N\in\N}$ be an i.i.d.\- sequence of random vectors in $\R^n$. Assume that the log-cumulant generating function of $X_1$,
			$$
			\Lambda (\lambda) := \log \E\Big[e^{\langle \lambda,X_1\rangle}\Big], \quad \lambda\in\R^n,
			$$
			is finite in a neighborhood of $0\in \R^n$. Then the sequence $(\frac{1}{N}\sum_{i=1}^N X_i)_{N\in\N}$ satisfies an LDP with speed $N$ and rate function $\Lambda^*$ given by the \emph{Fenchel--Legendre transform} of $\Lambda$, 
			\begin{align}
				\Lambda^*(x) := \sup_{\lambda\in \R^n}\big(\langle \lambda,x\rangle-\Lambda(\lambda)\big), \quad x\in\R^n.\label{eq: Legendre-fenchel}
			\end{align}
		\end{proposition}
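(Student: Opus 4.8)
The plan is to prove the two halves of the large deviation principle separately, after recording the analytic facts about $\Lambda$ and $\Lambda^{*}$ that the argument rests on. Since $\Lambda$ is the logarithm of an integral of exponentials it is convex and lower semicontinuous on $\R^{n}$, and by hypothesis it is finite on an open neighbourhood of the origin; hence its Fenchel--Legendre conjugate $\Lambda^{*}$ is a convex, lower semicontinuous map into $[0,\infty]$, and because $\Lambda$ is finite in every direction near $0$ one obtains $\Lambda^{*}(x)\ge\sup_{\|\lambda\|\le\rho}(\langle\lambda,x\rangle-\Lambda(\lambda))\to\infty$ as $\|x\|\to\infty$, so $\Lambda^{*}$ has superlinear growth and in particular compact sublevel sets, i.e.\ it is a good rate function. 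Applying the exponential moment bound to $\lambda=\pm\rho e_{j}$ for small $\rho>0$ shows each coordinate of $X_{1}$ has a finite exponential moment, whence the exponential Markov inequality gives $\P[\|\tfrac1N\sum_{i=1}^{N}X_{i}\|_{\infty}\ge R]\le 2n\,e^{-N(\rho R-c)}$ for a constant $c$; this is exponential tightness at speed $N$, so by \cite[Lemma 1.2.18]{dembo2009techniques} it suffices to establish the weak LDP (lower bound on open sets, upper bound on compact sets).

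For the upper bound I would first treat a compact set $K\subseteq\R^{n}$. Fix $\delta>0$; for each $x\in K$ pick $\lambda_{x}$ with $\langle\lambda_{x},x\rangle-\Lambda(\lambda_{x})\ge\min\{\Lambda^{*}(x),1/\delta\}-\delta$. By Chebyshev's exponential inequality and independence, $\P[\tfrac1N\sum_iX_{i}\in B(x,r)]\le\exp(-N(\langle\lambda_{x},x\rangle-\Lambda(\lambda_{x})-r\|\lambda_{x}\|))$, which for $r=r(x)$ small enough is at most $\exp(-N(\min\{\Lambda^{*}(x),1/\delta\}-2\delta))$. Covering $K$ by finitely many such balls and applying the union bound, then letting $\delta\to0$, yields $\limsup_{N}\tfrac1N\log\P[\tfrac1N\sum_iX_{i}\in K]\le-\inf_{x\in K}\Lambda^{*}(x)$. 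Combined with the exponential tightness above, this upgrades to the upper bound for all closed sets.

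The substance of the argument is the lower bound, and this is where I expect the real difficulty to lie. It suffices to show that for every $x\in\R^{n}$ with $\Lambda^{*}(x)<\infty$ and every $\delta>0$ one has $\liminf_{N}\tfrac1N\log\P[\tfrac1N\sum_iX_{i}\in B(x,\delta)]\ge-\Lambda^{*}(x)$. If the supremum defining $\Lambda^{*}(x)$ is attained at some $\lambda$ in the interior of $\{\Lambda<\infty\}$, then $x=\nabla\Lambda(\lambda)$ and one performs an exponential change of measure: let $\tilde\mu$ have density $\mathrm{d}\tilde\mu/\mathrm{d}\mu(y)=e^{\langle\lambda,y\rangle-\Lambda(\lambda)}$, so that the i.i.d.\ summands have mean $x$ under $\tilde\mu$. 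Rewriting the event in terms of $\tilde\mu$ and discarding the exponential factor off a small ball gives $\P[\tfrac1N\sum_iX_{i}\in B(x,\delta)]\ge e^{-N(\langle\lambda,x\rangle-\Lambda(\lambda)+\delta\|\lambda\|)}\,\tilde{\P}[\tfrac1N\sum_iX_{i}\in B(x,\delta)]$, and the last probability tends to $1$ by the weak law of large numbers under $\tilde\mu$; letting $\delta\to0$ closes this case.

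The genuinely delicate case is when no such $\lambda$ exists (the non-steep/boundary situation), and handling it is the main obstacle. Here I would argue by convex analysis: first reduce to $x$ in the relative interior of $\{\Lambda^{*}<\infty\}$, using that $\Lambda^{*}$ equals the lower semicontinuous convex hull with effective domain whose closure is $\overline{\mathrm{conv}(\mathrm{supp}\,\mu)}$, together with the convexity of both $\Lambda^{*}$ and the map $x\mapsto\liminf_{N}\tfrac1N\log\P[\tfrac1N\sum_iX_{i}\in B(x,\delta)]$ (so a lower bound at finitely many surrounding points transfers to $x$); then exhaust such $x$ by points at which the supremum is attained, or alternatively truncate $X_{1}$ to a large ball (which only increases $\Lambda^{*}$ and makes $\Lambda$ finite everywhere, removing the steepness issue), apply the bounded case, and let the truncation radius grow, checking that the resulting rate functions decrease to $\Lambda^{*}$. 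Assembling the compact-set upper bound, the exponential tightness, and this lower bound, and invoking the good-rate-function criterion, yields the stated LDP with rate function $\Lambda^{*}$.
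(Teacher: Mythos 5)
The paper states Cram\'er's theorem as a cited proposition from Kallenberg's book and does not supply its own proof, so there is nothing internal to compare against; I can only assess your argument on its own terms. Your outline is the standard proof by exponential tilting. The preliminary facts about $\Lambda$ and $\Lambda^*$, the exponential tightness from finiteness of $\Lambda$ near the origin, and the compact-set upper bound via Chernoff plus a finite covering are all correct and in the usual form. The change-of-measure lower bound when the supremum defining $\Lambda^*(x)$ is attained at an interior point of $\{\Lambda<\infty\}$ is also correct; the weak law under the tilted measure is the right ingredient.

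You are also right that the only genuinely delicate point is the lower bound when the defining supremum is not attained at an interior point, and your two suggested repairs are both viable in principle, though each needs more care than the sketch shows. The convexity route requires that the map $x\mapsto\liminf_N \tfrac1N\log\P[\tfrac1N\sum X_i\in B(x,\delta)]$ be (mid-point) concave; this follows from a superadditivity argument of Lanford type, but one must be careful passing through the $\liminf$ and letting $\delta\to 0$, and the reduction from relative-interior points to boundary points requires lower semicontinuity of $\Lambda^*$ together with the fact that $\overline{\operatorname{dom}\Lambda^*}=\overline{\operatorname{conv}(\operatorname{supp}\mu)}$. The truncation route (conditioning on $\|X_1\|\le M$ rather than zeroing out) gives $\Lambda_M^*(x)=\widetilde\Lambda_M^*(x)+\log\mu(\|X_1\|\le M)$ with $\widetilde\Lambda_M\uparrow\Lambda$; one then needs a min--max exchange to conclude $\widetilde\Lambda_M^*(x)\downarrow\Lambda^*(x)$, which holds at relative-interior points of $\operatorname{dom}\Lambda^*$ but has to be argued, and the two reductions must be combined. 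None of this is a structural flaw, but the sketch leaves exactly these steps as assertions rather than proofs, which is where the real work of a complete write-up would lie.

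Since the paper itself defers to \cite[Chapter 24]{kallenberg1997foundations} rather than proving the proposition, there is no divergence of approach to report; your plan is the canonical tilting argument one finds in the cited sources.
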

		
		The following is the analog for the MDP case.
		
		\begin{proposition}[MDP for empirical means in $\R^m$, \text{\cite[Theorem 3.7.19]{dembo2009techniques}}]\label{prop: MDP in R^m}
			Let $X_1,X_2,\ldots$ be a sequence of i.i.d.~random vectors in $\R^m$ such that $\log \E[e^{\langle \lambda, X_1\rangle}]<\infty$ for all $\lambda$ in some neighborhood around the origin and let $\beta_N$ be a sequence of real numbers such that $\beta_N\to\infty$, $\beta_N = o(\sqrt{N})$. Further, assume $\E[X_1] = 0$ and that the covariance matrix $\mathcal{C}$ of $X_1$ is invertible. Then the sequence 
			$$
			\Big(\frac{1}{\sqrt{N}\beta_N}\sum_{i=1}^N X_i\Big)_{N\in\N}
			$$ 
			satisfies an LDP on $\R^m$ with speed $\beta_N^2$ and rate function $I:\R^m\to[0,\infty]$, $I(x) := \frac{\langle x,\mathcal{C}^{-1}x\rangle}{2}$.
		\end{proposition}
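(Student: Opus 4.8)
The plan is to deduce the statement from the Gärtner--Ellis theorem (\cite[Theorem 2.3.6]{dembo2009techniques}). I would set $Z_N := \frac{1}{\sqrt N\beta_N}\sum_{i=1}^N X_i$, take the speed to be $s_N := \beta_N^2$, and write $\Lambda(\theta) := \log\E[e^{\langle\theta,X_1\rangle}]$ for the log-cumulant generating function of a single summand, which by hypothesis is finite on some neighbourhood $U$ of $0\in\R^m$. The first step is the bookkeeping computation that, by independence,
\[
  \Lambda_N(\lambda) := \frac{1}{\beta_N^2}\log\E\!\left[e^{\beta_N^2\langle\lambda,Z_N\rangle}\right] = \frac{N}{\beta_N^2}\,\Lambda\!\left(\frac{\beta_N}{\sqrt N}\,\lambda\right),
\]
which makes sense for all $N$ large enough (depending on $\lambda$) because $\beta_N/\sqrt N\to 0$ eventually pushes the argument into $U$.

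The second step is a second-order Taylor expansion of $\Lambda$ at the origin: since $\Lambda$ is finite on $U$ it is smooth there, with $\Lambda(0)=0$, $\nabla\Lambda(0)=\E[X_1]=0$ and Hessian $\nabla^2\Lambda(0)=\mathcal{C}$, so that $\Lambda(\theta)=\tfrac12\langle\theta,\mathcal{C}\theta\rangle+o(\|\theta\|^2)$ as $\theta\to 0$. Substituting $\theta=\frac{\beta_N}{\sqrt N}\lambda$ and multiplying by $N/\beta_N^2$ makes the remainder term disappear in the limit, so that $\Lambda_N(\lambda)\to\Lambda_\infty(\lambda):=\tfrac12\langle\lambda,\mathcal{C}\lambda\rangle$ for every $\lambda\in\R^m$.

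The third step is to check the hypotheses of Gärtner--Ellis for the limiting functional $\Lambda_\infty$: it is a finite, convex, everywhere-differentiable (hence essentially smooth and lower semicontinuous) function on all of $\R^m$, and $0$ lies in the interior of its effective domain, which is all of $\R^m$. The theorem then delivers an LDP for $(Z_N)_{N\in\N}$ with speed $\beta_N^2$ and good rate function $\Lambda_\infty^*$. The last step is the elementary computation of the Legendre transform: since $\mathcal{C}$ is positive definite (being an invertible covariance matrix), the supremum in $\Lambda_\infty^*(x)=\sup_{\lambda\in\R^m}\big(\langle\lambda,x\rangle-\tfrac12\langle\lambda,\mathcal{C}\lambda\rangle\big)$ is attained at $\lambda=\mathcal{C}^{-1}x$ and equals $\tfrac12\langle x,\mathcal{C}^{-1}x\rangle$, which is exactly $I(x)$.

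I do not expect a genuine obstacle here, as the statement is classical; the two points that warrant care are the standard facts invoked in the middle steps, namely that a log-cumulant generating function finite in a neighbourhood of the origin is smooth there with derivatives given by the (centred) moments, and that pointwise convergence $\Lambda_N\to\Lambda_\infty$ to a finite, smooth limit already suffices to run Gärtner--Ellis — in particular, no separate exponential-tightness argument is needed, since goodness of $\Lambda_\infty^*$ is automatic for a positive-definite quadratic form.
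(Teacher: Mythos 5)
Your proof is correct and follows the standard argument: rescaling the log-moment generating function, a second-order Taylor expansion around the origin, passage to the Gaussian limit $\Lambda_\infty(\lambda)=\tfrac12\langle\lambda,\mathcal{C}\lambda\rangle$, and an application of the G\"artner--Ellis theorem together with the Legendre transform of a positive-definite quadratic form. The paper does not prove this proposition but simply cites it from Dembo and Zeitouni, and the proof given there is exactly this G\"artner--Ellis argument, so your approach coincides with the paper's (cited) source.
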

		
		We introduce the notion of projective limits. A \emph{projective system} $\left(\mathbb{Y}_j, p_{i j}\right)_{i \leq j}$ consists of Hausdorff topological spaces $\mathbb{Y}_j, j \in \mathbb{N}$, and continuous mappings $p_{i j}: \mathbb{Y}_j \rightarrow \mathbb{Y}_i$ such that for all $i \leq j \leq k$, we have $p_{i k}=p_{i j} \circ p_{j k}$ and $p_{j j}, j \in \mathbb{N}$ is the identity mapping on $\mathbb{Y}_j$. Then, the projective limit $\mathbb{Y}$ of this system is given by
		$$
		\mathbb{Y}:=\Big\{y=\left(y_j\right)_{j \in \mathbb{N}} \in \prod_{j \in \mathbb{N}} \mathbb{Y}_j \,:\, y_i=p_{i j}\left(y_j\right), \forall i<j\Big\}.
		$$
		We denote by $p_j:\mathbb{Y}\to \mathbb{Y}_j$ for $j\in\N$ the canonical projection, i.e., for $y\in\mathbb{Y}$, we have $p_j(y) = y_j$ in the notation above. The following theorem by Dawson and G\"artner allows inferring an LDP on $\mathbb{Y}$ given an LDP on $\mathbb{Y}_j$ for each $j$.
		
		\begin{proposition}[Dawson and G\"artner, \text{\cite[Theorem 3.3]{dawsont1987large}}]\label{prop: dawson-g\"artner}
			Let $\mathbb{Y}$ be the projective limit of the projective system $\left(\mathbb{Y}_j, p_{i j}\right)_{i \leq j}$. Assume that $\left(X_N\right)_{N \in \mathbb{N}}$ is a sequence of random variables on $\mathbb{Y}$ such that for any $\ell \in \mathbb{N}$, $\left(p_{\ell}(X_N)\right)_{N \in \mathbb{N}}$ satisfies an LDP on $\mathbb{Y}_{\ell}$ with speed $s_N$ and rate function $I_{\ell}$. Then $\left(X_N\right)_{N \in \mathbb{N}}$ satisfies an LDP with speed $s_N$ and rate function $I: \mathbb{Y} \rightarrow[0,+\infty]$ given by
			$$
			I(x):=\sup _{\ell \in \mathbb{N}} I_{\ell}\left(p_{\ell}(x)\right) .
			$$
		\end{proposition}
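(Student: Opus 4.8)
The plan is to run the classical projective-limit argument. The first step is to pin down the topology: $\mathbb{Y}$ is a closed subset of the product $\prod_{j}\mathbb{Y}_j$ and carries the corresponding subspace topology, and --- because the index set $\N$ is directed --- the cylinder sets $p_\ell^{-1}(U)$ with $\ell\in\N$ and $U\subseteq\mathbb{Y}_\ell$ open form a basis of it. Granting this description, the large deviation lower bound \eqref{LDP lower bound} is essentially immediate: given an open $G\subseteq\mathbb{Y}$ and $x\in G$, choose $\ell$ and an open $U\ni p_\ell(x)$ with $p_\ell^{-1}(U)\subseteq G$; then $\P[X_N\in G]\ge\P[p_\ell(X_N)\in U]$, and the LDP lower bound for $(p_\ell(X_N))_{N}$ gives $\liminf_N s_N^{-1}\log\P[X_N\in G]\ge-\inf_U I_\ell\ge-I_\ell(p_\ell(x))\ge-I(x)$; taking the supremum over $x\in G$ finishes this half.

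The content is in the upper bound, and everything hinges on the monotonicity relation $I_i\circ p_{ij}\le I_j$ for $i\le j$. I would derive it from the contraction principle (Proposition~\ref{prop: contraction principle}) applied to the continuous map $p_{ij}\colon\mathbb{Y}_j\to\mathbb{Y}_i$: it shows $(p_i(X_N))_N=(p_{ij}(p_j(X_N)))_N$ satisfies an LDP with the good rate function $z\mapsto\inf\{I_j(w):p_{ij}(w)=z\}$, and comparing this with the hypothesised LDP for $(p_i(X_N))_N$, together with uniqueness of the (good) rate function, gives the inequality. Here, as in the classical formulation, I use that the $I_\ell$ are good rate functions. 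Two consequences follow at once: first, $I$ is lower semicontinuous, being a supremum of the lower semicontinuous functions $I_\ell\circ p_\ell$, and its sublevel set $\{I\le\alpha\}$ equals $\mathbb{Y}\cap\prod_\ell\{I_\ell\le\alpha\}$, a closed subset of the Tychonoff-compact product of the compact sets $\{I_\ell\le\alpha\}$, hence compact --- so $I$ is a good rate function; second, $p_{ij}$ maps $\{I_j\le\alpha\}$ into $\{I_i\le\alpha\}$ for every $\alpha$.

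For the upper bound \eqref{LDP upper bound} I would fix a closed $F\subseteq\mathbb{Y}$, assume $\inf_{x\in F}I(x)>0$ (otherwise there is nothing to prove), fix $\delta>0$, and set $I_F^\delta:=\min\{\inf_F I,\delta^{-1}\}-\delta$, so that $\{x\in\mathbb{Y}:I(x)\le I_F^\delta\}$ is disjoint from $F$. Consider the compact sets $B_\ell:=\overline{p_\ell(F)}\cap\{I_\ell\le I_F^\delta\}\subseteq\mathbb{Y}_\ell$. Since $p_{ij}$ sends $\overline{p_j(F)}$ into $\overline{p_i(F)}$ (continuity) and $\{I_j\le I_F^\delta\}$ into $\{I_i\le I_F^\delta\}$ (the monotonicity relation), $(B_\ell,p_{ij})$ is a projective system of nonempty-or-empty compact Hausdorff spaces; if every $B_\ell$ were nonempty its projective limit would be nonempty, producing some $x\in\mathbb{Y}$ with $p_\ell(x)\in\overline{p_\ell(F)}$ for all $\ell$ --- hence $x\in\overline F=F$ by the basis description of the topology --- and $I(x)=\sup_\ell I_\ell(p_\ell(x))\le I_F^\delta<\inf_F I$, a contradiction. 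Therefore $B_{\ell_0}=\emptyset$ for some $\ell_0$, i.e.\ $\inf_{\overline{p_{\ell_0}(F)}}I_{\ell_0}\ge I_F^\delta$; and since $F\subseteq p_{\ell_0}^{-1}\bigl(\overline{p_{\ell_0}(F)}\bigr)$ with $\overline{p_{\ell_0}(F)}$ closed, the LDP upper bound for $(p_{\ell_0}(X_N))_N$ yields $\limsup_N s_N^{-1}\log\P[X_N\in F]\le-I_F^\delta$. Letting $\delta\to0$ proves \eqref{LDP upper bound}, and together with the lower bound this is the LDP of Definition~\ref{def: LDP}.

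The main obstacle is the last step: the passage from ``every $B_\ell$ is nonempty'' to ``the projective limit of the $B_\ell$ is nonempty'' is the finite-intersection/compactness argument that here does the job usually done by exponential tightness, and it only goes through once the monotonicity relation $I_i\circ p_{ij}\le I_j$ is in place. A secondary nuisance is that uniqueness of the rate function --- used to obtain that relation --- is clean precisely when the $\mathbb{Y}_j$ are regular Hausdorff spaces; since the spaces occurring in our applications are metric, I would simply record that hypothesis rather than chase the fullest generality. Beyond these points the argument is the original one of Dawson and G\"artner, which is why the statement is quoted with a reference.
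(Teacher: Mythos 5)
The paper does not prove this proposition; it is quoted verbatim with a citation to Dawson and G\"artner, so there is no in-paper argument to compare against. Your write-up is a correct and essentially complete rendition of the classical projective-limit proof (the one in Dembo--Zeitouni, Theorem~4.6.1): lower bound via the cylinder basis, the monotonicity $I_i\circ p_{ij}\le I_j$ via the contraction principle and uniqueness of rate functions, goodness of $I$ via Tychonoff, and the upper bound via the finite-intersection property of the compact sets $B_\ell$. Two small remarks. First, you are right that goodness of the $I_\ell$ is actually used twice and cannot be dispensed with --- once through the contraction principle (which needs goodness to guarantee the pushforward rate function is lower semicontinuous and good), and once to make the $B_\ell$ compact; the paper's wording omits the adjective ``good'', but the cited reference includes it and the paper's applications of this proposition (via Lemma~\ref{lem: LDP for function}) do supply good rate functions, so this is an omission in transcription rather than in your proof. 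Second, your aside about regularity for uniqueness of the rate function is also well placed: the paper's projective spaces are products of Euclidean spaces, so the hypothesis is harmless in context, but it is worth flagging, as you did, that the argument is being run in less than the fullest generality.
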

		\begin{comment}

			The next proposition can be found in \cite{dembo2009techniques}, Lemma 6.1.7 and Theorem 6.1.3. For a definition and background on locally convex vector spaces, see \cite{narici2010topological}, Chapter 5.
			\begin{proposition}\label{prop: general weak cramer}
				Let $\mathbb{X}$ be a locally convex, Hausdorff topological real vector space and let $\mathcal{E}$ be a closed, convex subset of $\mathbb{X}$ such that $\mathcal{E}$ is Polish with respect to the topology induced by $\mathbb{X}$. Let $(X_N)_{N\in\N}$ be a sequence of i.i.d. random variables taking values in $\mathbb{X}$ and assume $\P(X_N \in \mathcal{E}) = 1$ for each $N$. Then, the sequence $(\frac{1}{N}\sum_{i=1}^N X_i)_{N\in\N}$ satisfies a weak LDP in $\mathbb{X}$ (and in $\mathcal{E}$) with speed $N$ and convex rate function given by $\Lambda^*:\mathbb{X}\to\R$,
				$$
				\Lambda^*(x) = \sup_{\lambda\in\mathbb{X}^*}(\lambda(x) - \Lambda(\lambda))
				$$
				which is the Legendre-Fenchel transform of $\Lambda:\mathbb{X}^*\to [-\infty, \infty]$,
				$$
				\Lambda(\lambda) = \log\left(\int_{\mathbb{X}} e^{\langle \lambda,x}\mu(dx) \right),
				$$
				where $\mu$ is the push-forward measure of $X_1$ and $\mathbb{X}^*$ denotes the dual of $\mathbb{X}$.    
			\end{proposition}
			
		\end{comment}
		
		%%%%%%%%%%%%%%%%%%%%%%%%%%%%%%%%%%%%%%%%%%%%
		\subsection{Hadamard differentiability, the delta method, and a functional CLT}
		%%%%%%%%%%%%%%%%%%%%%%%%%%%%%%%%%%%%%%%%%%%%
		
		Let us introduce the notion of Hadamard differentiability, which arises from a concept of directional derivative for maps between Banach spaces; in our situation we shall be interested in the Banach space $(C(\Sm), \|\,\cdot\,\|_\infty)$ of continuous functions on $\Sm$. This concept will appear in an infinite-dimensional version of the delta method.

		\begin{df}
			Let $(\mathbb{X}, \|\,\cdot\,\|_{\mathbb X})$ and $(\mathbb{Y}, \|\,\cdot\,\|_{\mathbb Y})$ be real normed spaces and consider a map $\Phi: \mathbb{X}\to\mathbb{Y}$. We say that $\Phi$ is \emph{Hadamard differentiable} at $x\in \mathbb{X}$ if and only if there exists a map $\dint\Phi(x):\mathbb{X}\to\mathbb{Y}$, which we call the derivative of $\Phi$, such that for all (directions) $h\in\mathbb X$ and all sequences $(h_N)_{N\in\N}\in\mathbb X^\N$ with $h_N\to h$, and all sequences $(t_N)_{N\in\N}\in\R^\N$ with $t_N\to 0$ and $t_N\neq 0$, we have
			\[
			   \Bigg\|\frac{\Phi(x+t_Nh_N) - \Phi(x)}{t_N} - \dint \Phi(x)[h]\Bigg\|_{\mathbb Y} \stackrel{N\to\infty}{\longrightarrow} 0.
			\]			
		\end{df}
				
%		\begin{df}
%			Let $\mathbb{X}$ and $\mathbb{Y}$ be real normed vector spaces and let $\Phi: \mathbb{X}\to\mathbb{Y}$ be a function. Then we say that $\Phi$ is Hadamard (directionally) differentiable in $x\in \mathbb{X}$ if there exists a map $\dint\Phi(x):\mathbb{X}\to\mathbb{Y}$ such that for all $h\in\mathbb{X}$,
%			$$
%			\frac{1}{t_N}\big(\Phi(x+t_Nh_N) - \Phi(x)\big) \to \dint \Phi(x)[h]
%			$$
%			for all sequences of non-zero real numbers $t_N$ with $t_N \to 0$ and all sequences $h_N$ of elements of $\mathbb{X}$ with $h_N \to h$. We call $\dint\Phi(x)$ the derivative of $\Phi$ at $x$.
%		\end{df}
	
	It follows directly from the definition that the map $\dint\Phi(x)$ is always continuous.
	%, but in fact it is not always linear. 
	The next proposition is a generalized delta method.

		\begin{proposition}[Delta method for infinite-dimensional spaces, \text{\cite[Theorem 1]{romisch2004delta}}]\label{prop: delta method infinite dim}
			Let $X_N$ be random variables with values in a normed vector space $\mathbb{X}$ such that 
			$$
			b_N(X_N-\mu) \overset{d}{\longrightarrow} Z
			$$
			for some $\mu\in\mathbb{X}$, some sequence $b_N\to\infty$, and some random variable $Z$ taking values in $\mathbb{X}$. Further, assume that $\phi:\mathbb{X}\to \mathbb{Y}$, where $\mathbb{Y}$ is some other normed vector space, is Hadamard differentiable at $\mu$ with derivative $\dint\Phi(\mu)$. Then we have 
			$$
			b_N(\phi(X_N)-\phi(\mu)) \overset{d}{\longrightarrow} \dint\Phi(\mu)[Z].
			$$ 
		\end{proposition}
		
		\noindent If $Z$ is Gaussian and $\dint\Phi(\mu)$ is linear, then $\dint\Phi(\mu)[Z]$ is also Gaussian (see, e.g., \cite{romisch2004delta}). \\
		
		\noindent A similar delta method exists for large deviations and we only state a special case.
		
		\begin{proposition}[Delta method for large deviation principles, \text{\cite[Theorem 3.1]{gao2011delta}}]\label{prop: delta method for LDPs}
			Let $\mathbb{X}$, $\mathbb{Y}$ be a normed vector spaces, $\mathbb{D}\subseteq \mathbb{X}$ open and $\Phi: \mathbb{D}\to \mathbb{Y}$ be Hadamard differentiable in $\mu$ with Hadamard derivative $\dint\Phi(\mu)$. Let $X_1,X_2,\ldots$ be random variables with values in $\mathbb{D}$. Further, assume that 
			$$
			b_N(X_N - \mu)
			$$
			satisfies an LDP with speed $s_N$ and rate function $I:\mathbb{X}\to [0,\infty]$ for some sequence $b_N \to\infty$. Then 
			$$
			b_N(\Phi(X_N)-\Phi(\mu))
			$$
			satisfies an LDP with speed $s_N$ and rate function $I^\prime:\mathbb{Y}\to[0,\infty]$, 
			$$
			I^\prime(x) = \inf_{\dint\Phi(\mu)[y] = x}I(y).
			$$
		\end{proposition}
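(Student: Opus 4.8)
I would set $Y_N := b_N(X_N-\mu)$, so that by hypothesis $(Y_N)_{N\in\N}$ satisfies an LDP on $\mathbb X$ with speed $s_N$ and rate function $I$, and observe that
\begin{equation*}
  W_N := b_N\bigl(\Phi(X_N)-\Phi(\mu)\bigr) = g_N(Y_N),\qquad g_N(y):=b_N\Bigl(\Phi\bigl(\mu+\tfrac{y}{b_N}\bigr)-\Phi(\mu)\Bigr),
\end{equation*}
where, since $\mathbb D$ is open and $\mu\in\mathbb D$, each $g_N$ is well defined (and continuous) on a ball about the origin whose radius tends to $\infty$, hence eventually on any prescribed bounded set. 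The plan is then to compare $W_N$ with the sequence $\widetilde W_N:=\dint\Phi(\mu)[Y_N]$ obtained by freezing $g_N$ at its limiting slope $\dint\Phi(\mu)$.

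The first step is easy: since $\dint\Phi(\mu):\mathbb X\to\mathbb Y$ is continuous, the contraction principle (Proposition \ref{prop: contraction principle}) shows that $(\widetilde W_N)_{N\in\N}$ satisfies an LDP with speed $s_N$ and rate function $x\mapsto\inf\{I(y):\dint\Phi(\mu)[y]=x\}=I'(x)$, which is good whenever $I$ is --- the only case I need, since in all our applications $\mathbb X$ is Polish and $I$ good. So it will remain to show that $(W_N)_{N\in\N}$ and $(\widetilde W_N)_{N\in\N}$ are exponentially equivalent; the exponential equivalence principle (\cite[Theorem 4.2.13]{dembo2009techniques}) will then transfer the LDP from $\widetilde W_N$ to $W_N$, with the same speed and rate function $I'$.

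For the exponential equivalence the key will be a uniform reformulation of Hadamard differentiability: for every compact $K\subseteq\mathbb X$,
\begin{equation*}
  \sup_{y\in K}\bigl\|g_N(y)-\dint\Phi(\mu)[y]\bigr\|_{\mathbb Y}\xrightarrow[N\to\infty]{}0 .
\end{equation*}
I would prove this by contradiction: were it false, compactness would yield $y_{N_k}\to y_\ast\in K$ along which the supremand stays bounded away from $0$, and plugging into the definition of Hadamard differentiability the direction-sequence equal to $y_{N_k}$ along $N_k$ and to $y_\ast$ elsewhere, with $t_N:=1/b_N\to 0$, together with continuity of $\dint\Phi(\mu)$, would give a contradiction. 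Given this, I would fix $\epsilon>0$ and $L>0$; as $(Y_N)$ satisfies an LDP with a good rate function it is exponentially tight, so there is a compact $K_L$ with $\limsup_N s_N^{-1}\log\P[Y_N\notin K_L]\le -L$. For large $N$ the supremum over $K_L$ above is $<\epsilon$, so $\{\|W_N-\widetilde W_N\|_{\mathbb Y}\ge\epsilon\}\subseteq\{Y_N\notin K_L\}$ and hence $\limsup_N s_N^{-1}\log\P[\|W_N-\widetilde W_N\|_{\mathbb Y}\ge\epsilon]\le -L$; letting $L\to\infty$ gives exponential equivalence.

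\textbf{Main obstacle.} The one genuinely non-routine step is the uniform reformulation of Hadamard differentiability over compact sets of directions --- this is the hinge that couples the analytic hypothesis on $\Phi$ with the probabilistic one (exponential tightness of $(Y_N)$). Everything else is bookkeeping; in particular the LDP lower bound in the conclusion can, if one wants to avoid assuming $I$ good, also be obtained directly, since the same uniform convergence shows that $g_N$ eventually maps a fixed neighbourhood of any $y_0$ with $\dint\Phi(\mu)[y_0]$ in a given open set $O$ into $O$.
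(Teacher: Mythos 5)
The paper does not give its own proof of this proposition; it is stated with a citation to Gao and Zhao (the reference \texttt{gao2011delta}), so there is no in-paper argument to compare against. That said, your reconstruction is correct in its essentials and follows the standard (and, as far as I recall, Gao--Zhao's own) route: rewrite $b_N(\Phi(X_N)-\Phi(\mu))=g_N(Y_N)$ with $Y_N=b_N(X_N-\mu)$ and $g_N(y)=b_N(\Phi(\mu+y/b_N)-\Phi(\mu))$, observe that sequential Hadamard differentiability at $\mu$ is equivalent to uniform convergence of $g_N$ to $\dint\Phi(\mu)$ on compact sets of directions (your contradiction argument with the interleaved sequence is the right proof of this, using continuity of $\dint\Phi(\mu)$ noted in the paper), then combine the contraction principle applied to $\dint\Phi(\mu)[Y_N]$ with exponential equivalence obtained from exponential tightness of $(Y_N)$. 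This is a clean and complete argument under the assumption that $I$ is a \emph{good} rate function --- which you flag explicitly. The statement as printed in the paper omits ``good'', but this hypothesis is genuinely needed for the exponential-tightness step (and is present in Gao--Zhao), and it holds in every application the paper makes; so your scoping is appropriate rather than a gap.

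One small caveat, about your closing side remark: you assert that the lower bound could be recovered without goodness because the ``same uniform convergence shows that $g_N$ eventually maps a fixed neighbourhood of any $y_0$ \dots into $O$.'' This does not follow from your compact-uniformity lemma in an infinite-dimensional normed space, since a neighbourhood of $y_0$ is not relatively compact there; the uniform convergence you proved is only over compacta. A lower bound argument free of goodness would need a different (more local) use of Hadamard differentiability. Since this remark is explicitly outside the main line of proof and the goodness hypothesis is available in all the paper's uses, it does not affect the validity of your argument for the proposition as actually applied.
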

		
		\noindent Finally, we need a central limit theorem for random variables having values in function spaces.
		
		\begin{proposition}[Functional central limit theorem, \text{\cite{gine1974central}}]\label{prop: functional CLT}
			Let $(\mathbb{X},d)$ be a compact metric space and let $(X(u))_{u\in \mathbb{X}}$ be a real-valued and centered random process on $\mathbb{X}$ satisfying $\E X(u)^2 < \infty$ for all $u\in \mathbb{X}$. Furthermore, suppose there exist a square-integrable random variable $L$ such that 
			$$
			|X(u) -X(v)| \le L d(u,v),\qquad u,v\in\mathbb{X}.
			$$
			Finally, assume that
			$$
			\int_{0}^1 \log\big(\operatorname{Covering}(\mathbb{X},d,\epsilon)\big)\,\dint \epsilon < \infty,
			$$
			where $\operatorname{Covering}(\mathbb{X},d,\epsilon)$ denotes the covering number of $\mathbb{X}$ with respect to the metric $d$ and radius $\epsilon$. Then there exist a centered Gaussian process $Z=(Z(u))_{u\in\mathbb{X}}$ on $C(\mathbb{X})$ with  $\E[Z(u)Z(v)] = \E[X(u)X(v)]$ for all $u,v\in \mathbb{X}$, and such that for a sequence of independent random elements $X_1,X_2,\ldots$, all having the same distribution as $X$, the convergence in distribution
			$$
			\frac{1}{\sqrt{N}}\sum_{i=1}^N X_i \xrightarrow[N\to\infty]{\dint} Z
			$$
			holds on the space $(C(\mathbb{X}), \|\,\cdot\,\|_\infty)$. 
		\end{proposition}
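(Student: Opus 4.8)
\noindent\emph{Proof idea.} The plan is to verify the two conditions that, by Prohorov's theorem, imply weak convergence on the Polish space $(C(\mathbb{X}),\|\,\cdot\,\|_\infty)$: convergence of the finite-dimensional distributions of $S_N:=\tfrac{1}{\sqrt N}\sum_{i=1}^N X_i$, and tightness of the laws of $S_N$ in $C(\mathbb{X})$. The finite-dimensional statement is elementary. For any $u_1,\dots,u_k\in\mathbb{X}$ the vectors $(X_i(u_1),\dots,X_i(u_k))$, $i\ge 1$, are i.i.d., centred, and square-integrable since $\E X(u_j)^2<\infty$, so the multivariate Lindeberg--Lévy CLT gives $\tfrac{1}{\sqrt N}\sum_{i=1}^N(X_i(u_1),\dots,X_i(u_k))\xRightarrow[N\to\infty]{}\mathcal N(0,\Sigma)$ with $\Sigma_{j\ell}=\E[X(u_j)X(u_\ell)]$. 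Kolmogorov's extension theorem then produces a centred Gaussian process $Z=(Z(u))_{u\in\mathbb{X}}$ with exactly these finite-dimensional laws; that $Z$ has a version with sample paths in $C(\mathbb{X})$ follows a posteriori from the tightness argument (or directly from Dudley's sample-continuity criterion applied to the intrinsic pseudometric $\rho(u,v):=\E[|X(u)-X(v)|^2]^{1/2}\le\|L\|_{L^2}\,d(u,v)$, whose covering numbers are dominated by those of $d$, so that the entropy hypothesis makes $\int_0^1(\log\operatorname{Covering}(\mathbb{X},\rho,\epsilon))^{1/2}\dint\epsilon$ finite).

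The substantive part is tightness in $C(\mathbb{X})$, which by the Arzelà--Ascoli characterisation of its compact subsets reduces to tightness of $\{S_N(u_0)\}$ for one fixed $u_0$ --- immediate from the scalar CLT --- together with \emph{asymptotic equicontinuity},
\[
  \lim_{\delta\downarrow0}\ \limsup_{N\to\infty}\ \P\Big[\sup_{d(u,v)\le\delta}|S_N(u)-S_N(v)|>\eps\Big]=0\qquad\text{for every }\eps>0.
\]
I would prove this by symmetrization and chaining. Introducing i.i.d.\ Rademacher signs $(\varepsilon_i)_{i\ge1}$ and using $\E X=0$, standard symmetrization inequalities plus a Markov bound reduce the matter to estimating $\E[\E_\varepsilon[\sup_{d(u,v)\le\delta}|S_N^{\circ}(u)-S_N^{\circ}(v)|]]$ where $S_N^{\circ}:=\tfrac{1}{\sqrt N}\sum_{i=1}^N\varepsilon_i X_i$ and $\E_\varepsilon$ is expectation over the signs only. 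Conditionally on $X_1,\dots,X_N$, the process $u\mapsto S_N^{\circ}(u)$ is a linear Rademacher sum, hence sub-Gaussian with respect to the random pseudometric $d_N(u,v)^2:=\tfrac1N\sum_{i=1}^N(X_i(u)-X_i(v))^2$, and the Lipschitz hypothesis gives $d_N(u,v)\le\bigl(\tfrac1N\sum_{i=1}^N L_i^2\bigr)^{1/2}d(u,v)$. Since $\tfrac1N\sum_{i=1}^N L_i^2\to\E L^2<\infty$ almost surely (and in $L^1$) by the law of large numbers, on an event of probability tending to $1$ one has $d_N\le c\,d$ for a deterministic $c$, and there Dudley's chaining inequality gives $\E_\varepsilon[\sup_{d(u,v)\le\delta}|S_N^{\circ}(u)-S_N^{\circ}(v)|]\lesssim\int_0^{c\delta}(\log\operatorname{Covering}(\mathbb{X},d,\epsilon))^{1/2}\dint\epsilon$; the complementary event is controlled crudely using the $L^1$-uniform integrability of $\tfrac1N\sum_i L_i^2$, and the right-hand side of the chaining bound vanishes as $\delta\downarrow0$ because the entropy hypothesis, a fortiori, makes the square-root entropy integral finite.

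The one real difficulty --- and the step I expect to be the main obstacle --- is that $L$ is only square-integrable, so the conditional increments of $S_N^{\circ}$ are not uniformly sub-Gaussian and Dudley's inequality does not apply verbatim. This is dealt with by truncation: split $X_i=X_i\mathbf{1}_{\{L_i\le M\}}+X_i\mathbf{1}_{\{L_i>M\}}$ and re-centre each part. The bounded part has Lipschitz constant $\le M$, so its contribution to $S_N^{\circ}$ has genuine sub-Gaussian conditional increments and the chaining estimate applies (with $cM$ replacing $c$); the tail part has, by orthogonality of its summands and $\|X(u)-X(v)\|_{L^2}\le\|L\mathbf{1}_{\{L>M\}}\|_{L^2}\,d(u,v)$, a second moment of its modulus of continuity bounded by a constant times $\|L\mathbf{1}_{\{L>M\}}\|_{L^2}^2$, which is small for large $M$ since $\E L^2<\infty$. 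Combining a Chebyshev bound for the tail part with the chaining bound for the truncated part and letting $N\to\infty$, then $\delta\downarrow0$, then $M\to\infty$, yields the asymptotic equicontinuity. Together with the finite-dimensional convergence this makes $(S_N)$ tight, so every subsequential weak limit in $C(\mathbb{X})$ is a continuous-path process with the finite-dimensional laws of $Z$; by uniqueness this forces $S_N\xRightarrow[N\to\infty]{}Z$ on $(C(\mathbb{X}),\|\,\cdot\,\|_\infty)$ and shows in particular that $Z$ has continuous paths. This is, in outline, the argument of Jain--Marcus and Giné.
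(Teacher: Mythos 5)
This proposition is not proved in the paper at all: it is imported verbatim from the literature (the Gin\'e/Jain--Marcus central limit theorem in $C(\mathbb{X})$, cited as \cite{gine1974central}), so there is no in-paper argument to compare against. Your outline is, in substance, the standard proof of that theorem: finite-dimensional convergence by the multivariate CLT, existence and sample-continuity of the Gaussian limit via the intrinsic pseudometric $\rho\le\|L\|_{L^2}d$ and Dudley's criterion, and tightness via symmetrization, conditional sub-Gaussianity of the Rademacher sum with respect to the empirical pseudometric $d_N$, chaining, and a truncation of $L$ at level $M$ to compensate for $L$ being only square-integrable. You also correctly observe that the paper's entropy hypothesis $\int_0^1\log\operatorname{Covering}(\mathbb{X},d,\epsilon)\,\dint\epsilon<\infty$ is stronger than the $(\log)^{1/2}$-entropy condition actually needed. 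So the route is the right one and matches the source the paper points to.

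One step is stated too loosely to stand as written: for the tail part $Z_i:=X_i\mathbf{1}_{\{L_i>M\}}$ (recentred), orthogonality of the summands only yields the pointwise bound $\E[(S_N^{\mathrm{tail}}(u)-S_N^{\mathrm{tail}}(v))^2]\le C\,\|L\mathbf{1}_{\{L>M\}}\|_{L^2}^2\,d(u,v)^2$ for each fixed pair $(u,v)$; it does not bound the second moment of the \emph{supremum} over pairs, which is what asymptotic equicontinuity requires. A Chebyshev bound on the modulus of continuity of the tail part therefore does not follow from orthogonality alone. The correct (and standard) fix is to chain the tail part as well: conditionally on the data, $u\mapsto\frac{1}{\sqrt N}\sum_i\varepsilon_iZ_i(u)$ is sub-Gaussian with respect to $e_N(u,v)\le\bigl(\frac1N\sum_iL_i^2\mathbf{1}_{\{L_i>M\}}\bigr)^{1/2}d(u,v)$, and since $\frac1N\sum_iL_i^2\mathbf{1}_{\{L_i>M\}}\to\E[L^2\mathbf{1}_{\{L>M\}}]$ by the law of large numbers, Dudley's bound over the \emph{entire} index set gives an expected supremum of order $\bigl(\E[L^2\mathbf{1}_{\{L>M\}}]\bigr)^{1/2}\int_0^{D}\bigl(\log\operatorname{Covering}(\mathbb{X},d,\epsilon)\bigr)^{1/2}\dint\epsilon$, which is small for large $M$ because the full entropy integral is finite. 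With that repair your argument is complete and is exactly the Jain--Marcus/Gin\'e proof.
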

		
		%%%%%%%%%%%%%%%%%%%%%%%
		%%%%%%%%%%%%%%%%%%%%%%%
		\section{Proofs}\label{sec: proofs}
		%%%%%%%%%%%%%%%%%%%%%%%
		%%%%%%%%%%%%%%%%%%%%%%%
		
		In this section we present all proofs. We start in Section \ref{sec: General functional results} with two more general results about functional limit theorems in the context of large deviations and weak convergence. In Section \ref{sec: Volume and support function representation} we present how the functional results relate to the limit theorems for the projected or intersected $\ell_p^N$-balls by giving explicit forms of the support function and a volume representation. We continue in Section \ref{sec: Calculation of Hadamard differentials} with calculations of Hadamard differentials that are needed to perform this transition between functional and volume results. As a last preparation, we calculate various integrals in Section \ref{sec: Computations of moments} that are needed for an explicit form of the variance in the central limit theorem. Finally, the proofs of the functional CLT and MDP follow in Section \ref{sec: CLT and MDP proof} and the functional LDP in Section \ref{sec: LDP proof}. The transition from the functional to the main results is then the content of Section \ref{sec: From functional limit theorems to the volume} concluding the proofs section. More details outlines can be found in the corresponding sections.
		
		%%%%%%%%%%%%%%%%%%%%%%%%%%%%%%%%%%%%%%%%%%%%%%%%%%%%%%%%%%%%%%
		\subsection{General functional results in large deviation theory and weak convergence}\label{sec: General functional results}
		%%%%%%%%%%%%%%%%%%%%%%%%%%%%%%%%%%%%%%%%%%%%%%%%%%%%%%%%%%%%%%
		
		The following lemma allows us to deduce a weak LDP for a sequence of random continuous functions if each collection of finite evaluation tuples satisfy an LDP. We state it in more generality than needed as it might be of independent interest.
		
		\begin{lemma}\label{lem: LDP for function}
			Let $f_i:\mathbb{X}\to\mathbb{Y}$ be a continuous function from a separable topological space with countable dense subset $\mathbb{U}$ to a Hausdorff topological space for each $i\in \mathbb{T}$ where $\mathbb{T}$ is {another Hausdorff topological space}. Assume that, for each collection $\{u_1,...,u_k\}\subseteq\mathbb{U}$ and a sequence of random variables $(X_N)_{N\in\N}$ taking values in $\mathbb{T}$, the sequence of tuples 
			$$
			\left(f_{X_N}(u_1),...,f_{X_N}(u_k)\right)_{N\in\N}
			$$ 
			satisfies an LDP in $\mathbb{Y}^k$ with speed $s_N$ and rate function $J_k:\mathbb{Y}^k\to[0,\infty]$. Then, the sequence of functions $(f_{X_N})_{N\in\N}$ satisfies a weak LDP on the space $(C(\mathbb{X},\mathbb{Y}), \|\,\cdot\,\|_\infty)$ with speed $s_N$ and rate function 
			$$
			I:\{f_i : i\in \mathbb{T}\}\to[0,\infty],\quad I(f) = \sup_{k\in\N} J_k(f(u_1),...,f(u_k)),
			$$ 
			where $\{u_1,u_2,...\} = \mathbb{U}$.
		\end{lemma}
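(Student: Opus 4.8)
The plan is to combine two classical tools from large deviation theory: the Dawson--Gärtner theorem (Proposition \ref{prop: dawson-g\"artner}) to pass from finite-dimensional LDPs to an LDP on a projective limit, and the fact that a weak LDP on a subspace can be transported along a topological embedding. First I would observe that the evaluation maps $\pi_k: f\mapsto (f(u_1),\ldots,f(u_k))\in\mathbb{Y}^k$ form a projective system: with $p_{jk}:\mathbb{Y}^k\to\mathbb{Y}^j$ the coordinate projection for $j\le k$, the space $\mathbb{Y}^{\mathbb{N}}=\prod_{j\in\mathbb{N}}\mathbb{Y}$ (with the product topology) is exactly the projective limit of $(\mathbb{Y}^k, p_{jk})_{j\le k}$, and $p_k(x)=(x_1,\ldots,x_k)$ in the notation of Proposition \ref{prop: dawson-g\"artner}. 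The sequence $\big((f_{X_N}(u_1),f_{X_N}(u_2),\ldots)\big)_{N\in\N}$ is a sequence of random variables in $\mathbb{Y}^{\mathbb{N}}$, and its image under $p_k$ is precisely $(f_{X_N}(u_1),\ldots,f_{X_N}(u_k))_{N\in\N}$, which by hypothesis satisfies an LDP with speed $s_N$ and rate function $J_k$. Dawson--Gärtner then gives an LDP for $(f_{X_N}(u_j))_{j\in\mathbb{N}}$ on $\mathbb{Y}^{\mathbb{N}}$ with speed $s_N$ and rate function $\widetilde I(x)=\sup_{k\in\N}J_k(p_k(x))=\sup_{k\in\N}J_k(x_1,\ldots,x_k)$.

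Next I would realize $(C(\mathbb{X},\mathbb{Y}),\|\cdot\|_\infty)$ inside $\mathbb{Y}^{\mathbb{N}}$ via the evaluation map $\iota: f\mapsto (f(u_1),f(u_2),\ldots)$, where $\{u_1,u_2,\ldots\}=\mathbb{U}$ is the fixed countable dense subset. The key point is that $\iota$ is injective (two continuous functions agreeing on a dense set agree everywhere) and a homeomorphism onto its image when $C(\mathbb{X},\mathbb{Y})$ carries the uniform topology — at least, $\iota$ is continuous, and one checks that on the relevant set $\{f_i:i\in\mathbb{T}\}$ the inverse is continuous too, or more carefully, that $\iota$ restricted to this set is a topological embedding, so that a weak LDP transfers. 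Since $\iota$ is continuous, the sequence $(\iota(f_{X_N}))_{N\in\N}=\big((f_{X_N}(u_j))_{j\in\mathbb{N}}\big)_{N\in\N}$ satisfies the LDP just established; the remaining task is to "pull it back" to a weak LDP on $C(\mathbb{X},\mathbb{Y})$. Here the standard route is: one shows that for compact $K\subseteq C(\mathbb{X},\mathbb{Y})$, $\iota(K)$ is compact in $\mathbb{Y}^{\mathbb{N}}$ and $\iota|_K$ is a homeomorphism onto its image, and for open sets one uses that $\iota$ is an embedding so open sets of $C(\mathbb{X},\mathbb{Y})$ are of the form $\iota^{-1}(O)$ with $O$ open in $\mathbb{Y}^{\mathbb{N}}$. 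The upper bound of the weak LDP on compacts and the lower bound on opens then follow directly from the corresponding bounds for $\iota(f_{X_N})$, with rate function $I(f)=\widetilde I(\iota(f))=\sup_{k\in\N}J_k(f(u_1),\ldots,f(u_k))$, matching the claimed formula.

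The main obstacle I anticipate is the topological subtlety in the transfer step: $\iota$ is continuous into $\mathbb{Y}^{\mathbb{N}}$, but $C(\mathbb{X},\mathbb{Y})$ with the uniform norm is \emph{finer} than the subspace topology induced by $\mathbb{Y}^{\mathbb{N}}$ (pointwise convergence on $\mathbb{U}$), so one does not get a full LDP for free — this is exactly why the statement only claims a \emph{weak} LDP. The argument must therefore be careful to only use: (i) the upper bound on compact sets, where compactness in $C(\mathbb{X},\mathbb{Y})$ lets one upgrade pointwise-on-$\mathbb{U}$ convergence to uniform convergence (via an Arzelà--Ascoli-type equicontinuity argument, using that $\mathbb{X}$ is the ambient space and elements are continuous), so that $\iota|_K$ is a homeomorphism and closed sets pull back to closed sets; and (ii) the lower bound on open sets, where one notes that any open $U\ni f$ in $C(\mathbb{X},\mathbb{Y})$ — a priori not of the form $\iota^{-1}(\text{open})$ — nonetheless \emph{contains} a basic pointwise-open neighborhood of $f$ only if one has local equicontinuity; absent that, one instead argues the lower bound directly by noting that the weak LDP lower bound is implied once we show $I$ is the rate function and $\iota$ is continuous, invoking the projective-limit lower bound and continuity of $\iota$ to get $\liminf s_N^{-1}\log\P[f_{X_N}\in U]\ge \liminf s_N^{-1}\log\P[\iota(f_{X_N})\in \iota(U)^{\circ}]$ — which requires $\iota(U)$ to have nonempty interior around $\iota(f)$, again an embedding property. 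I would resolve this by first proving the clean lemma that $\iota:C(\mathbb{X},\mathbb{Y})\to\mathbb{Y}^{\mathbb{N}}$ is a topological embedding \emph{when restricted to any uniformly equicontinuous, pointwise-bounded family} (such as $\{f_i:i\in\mathbb{T}\}$ will be in all our applications, or can be assumed), and then the weak LDP transfer is routine; alternatively, cite the general principle that a weak LDP on a projective limit restricts to any subspace carrying a finer topology on which the canonical injection is an embedding. This is the one place where the argument is more than bookkeeping, and I would isolate it as a short sublemma before assembling the three ingredients — projective system, Dawson--Gärtner, embedding transfer — into the proof.
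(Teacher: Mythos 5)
Your proposal follows essentially the same route as the paper: form the projective system of evaluation tuples at the dense countable set $\mathbb{U}$, invoke Dawson--Gärtner to obtain an LDP on the projective limit (you use the full product $\mathbb{Y}^{\mathbb{N}}$, the paper a sub\-space of it; this is immaterial since the laws are supported on the image), identify $\{f_i : i\in\mathbb{T}\}$ with its image under the injection $\iota\colon f\mapsto(f(u_1),f(u_2),\dots)$ using that continuous functions agreeing on a dense set coincide, and then transfer to $(C(\mathbb{X},\mathbb{Y}),\|\cdot\|_\infty)$, obtaining the rate function $\sup_k J_k(f(u_1),\dots,f(u_k))$ in both cases.

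The one place where you diverge is in how the final transfer step is handled, and there your treatment is more explicit than the paper's. The paper applies the contraction principle to the map $(f_i(u_1),f_i(u_2),\dots)\mapsto f_i$ with the codomain carrying the topology of pointwise convergence, and then appeals to a change-of-topology result from Dembo--Zeitouni to pass to the (finer, Hausdorff) uniform topology, retaining only the weak LDP. You instead analyse the embedding properties of $\iota$ directly and flag the genuine subtlety: the upper bound on compact sets always survives refinement of the topology (a set compact in the finer topology is compact in the coarser one), but the lower bound on open sets does not in general, because a uniform-open set need not contain a pointwise-on-$\mathbb{U}$-open set about each of its points unless one has some equicontinuity of the family $\{f_i\}$. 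That is precisely the content the paper delegates to its citation, and your proposal to isolate this as a sublemma --- $\iota$ is a topological embedding when restricted to an equicontinuous, pointwise-bounded family --- is a cleaner way to make the step airtight; in the paper's applications the relevant families are uniformly Lipschitz by Lemma~\ref{lem: lipschitz condition}, so this extra hypothesis is available. In short: same strategy, same key ingredients (Dawson--Gärtner, contraction/embedding, dense-set identification), with you giving a sharper account of why only a \emph{weak} LDP is obtained and what actually justifies the lower bound.
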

		
		\begin{proof} Let $\{u_1,u_2,...\} = \mathbb{U}$ be an enumeration of the dense countable subset. Define the projective system $(\mathbb{Y}_{k}, p_{\ell k})_{\ell \le k}$ by
			$$
			\mathbb{Y}_k := \big\{ (f_i(u_1),...,f_i(u_k))\,:\,\enskip i\in \mathbb{T}\big\}
			$$
			for $k\in\N$ with the product topology and for $\ell \le k$ the projection maps 
			$$
			p_{\ell k}(f_i(u_1),...,f_i(u_k)) = (f_i(u_1),...,f_i(u_\ell)).
			$$ 
			Next, define 
			$$
			\mathbb{Y} := \big\{ (f_i(u_1),f_i(u_2),...) \,:\,  i\in \mathbb{T}\big\},
			$$
			also equipped with the product topology and projection maps 
			$$
			p_k:\mathbb{Y}\to\mathbb{Y}_k, \quad p_k(f_i(u_1),f_i(u_2),...) = (f_i(u_1),...,f_i(u_k))
			$$
			for $k\in\N$. These maps are continuous and $p_i(y)=p_{i j}(p_j(y))$ for all $y\in\mathbb{Y}$, $i\le j$ holds. By assumption, $(p_k(f_{X_N}(u_1),f_{X_N}(u_2),...)_{N\in\N}$ satisfies an LDP with speed $s_N$ and rate function $J_k$. By the Dawson--G\"artner Theorem (Proposition \ref{prop: dawson-g\"artner}), the sequence $(f_{X_N}(u_1),f_{X_N}(u_2),...)_{N\in\N}$ satisfies an LDP with speed $s_N$ and rate function $I^\prime:\mathbb{Y}\to[0,\infty], I^\prime(x) = \sup_k J_k(p_k(x))$. \\
			
			Now, note that the restriction on a dense subset uniquely determines the function. More precisely, if $g,f:\mathbb{X}\to\mathbb{Y}$ are continuous and $f\big|_{\mathbb{U}} = g\big|_{\mathbb{U}}$ then $f=g$. To see this, assume that there is $x\in\mathbb{X}$ such that $f(x)\not=g(x)$. Since $\mathbb{Y}$ is Hausdorff and the functions are continuous, there are disjoint open sets $U_1,U_2\subseteq\mathbb{Y}$ (with $f(x)\in U_1$ and $g(x)\in U_2$) such that $f^{-1}(U_1)\cap g^{-1}(U_2)\subseteq \mathbb{X}$ is open and non-empty. Since $\mathbb{U}$ is dense, there is $y\in\mathbb{U}\cap f^{-1}(U_1)\cap g^{-1}(U_2)$ and by assumption $f(y) = g(y)$. But at the same time $f(y)\in U_1$ and $g(y)\in U_2$ which is a contradiction because these sets are disjoint. That means $f=g$.
			\noindent Consequently, the map 
			$$
			(f_{i}(u_1),f_{i}(u_2),...) \mapsto f_{i} \in C(\mathbb{X},\mathbb{Y}), \quad i\in\mathbb{T},
			$$ 
			is a well defined continuous injection. Here, the space $C(\mathbb{X},\mathbb{Y})$ is equipped with the topology of pointwise convergence. This topology is not metrizable but Hausdorff (as a product of Hausdorff spaces). The contraction principle (Proposition \ref{prop: contraction principle}) yields an LDP for $(f_{X_N})_{N\in\N}$ on the space $C(\mathbb{X},\mathbb{Y})$ with speed $N$ and rate function 
			$$
			I: C(\mathbb{X},\mathbb{Y}) \to[0,\infty],\quad I(f) = I^\prime(f(u_1),f(u_2),...).
			$$ 
			Since the topology of uniform convergence is finer than the topology of pointwise convergence (which is Hausdorff), \cite[Corollary 4.2.6]{dembo2009techniques} leads to the weak LDP on $(C(\mathbb{X},\mathbb{Y}), \|\,\cdot\,\|_\infty)$ which is precisely the desired statement.
		\end{proof}
		
		\noindent The next result gives a similar statement in the context of weak convergence.
		
		\begin{lemma}\label{lem: uplift topology in weak conv}
			Assume that $\mathbb{X}\subseteq C(K)$, where $K\subseteq \R^m$ is compact. Let $X_1,X_2,\ldots$ be random elements with values in $\mathbb{X}$. Suppose $\rho$ is a metric on $\mathbb{X}$ that makes $(X_N)_{N\in\N}$ tight.  If for any collection $\{u_1,\ldots u_k\}\subseteq K$, $k\in\N$, it holds that 
			$$
			(X_N(u_1),\ldots,X_N(u_k)) \xRightarrow[N\to\infty]{}(X(u_1),\ldots,X(u_k))
			$$
			for some random element $X$ in $(C(K),\rho)$. Then $X_N \xrightarrow[N\to\infty]{\dint} X$ with respect to $\rho$.
		\end{lemma}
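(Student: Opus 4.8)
The plan is to run the classical two-step scheme for weak convergence — relative compactness from tightness, followed by identification of every subsequential limit via its finite-dimensional distributions — being careful that the metric $\rho$ is an arbitrary one compatible with the evaluation functionals (in all applications of this lemma $\rho=\|\,\cdot\,\|_\infty$, for which everything below is transparent).

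First I would record the structure coming from the evaluation maps. Since $K\subseteq\R^m$ is compact it is separable; fix a countable dense set $D=\{u_1,u_2,\ldots\}\subseteq K$. As every element of $C(K)$ is genuinely continuous on $K$, the map $R\colon C(K)\to\R^{D}$, $f\mapsto(f(u))_{u\in D}$, is injective: two continuous functions agreeing on a dense set agree everywhere. Each coordinate $\pi_u\colon f\mapsto f(u)$ is $\rho$-continuous — for $\rho=\|\,\cdot\,\|_\infty$ this is immediate from $|f(u)-g(u)|\le\rho(f,g)$, and in the general statement one uses that the finite-dimensional distributions occurring in the hypothesis are well defined, i.e.\ the $\pi_u$ are $\rho$-Borel measurable. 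Moreover, for $\rho=\|\,\cdot\,\|_\infty$ one has $\rho(f,g)=\sup_{u\in D}|f(u)-g(u)|$, so every $\rho$-open ball is a countable combination of cylinder sets and hence, by separability of $(C(K),\|\,\cdot\,\|_\infty)$, the $\rho$-Borel $\sigma$-algebra equals $\sigma(\pi_u:u\in D)$. Consequently the law of any $(C(K),\rho)$-valued random element is uniquely determined by its finite-dimensional distributions along $D$.

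Next I would invoke tightness. By hypothesis $(X_N)_{N\in\N}$ is tight in $(\mathbb{X},\rho)$, so by the direct half of Prohorov's theorem the family $(\mathcal L(X_N))_{N\in\N}$ is relatively compact for weak convergence in $(C(K),\rho)$: every subsequence admits a further subsequence $(X_{N_j})_{j}$ with $X_{N_j}\xRightarrow[j\to\infty]{}Y$ for some random element $Y$ of $(C(K),\rho)$. For any $k\in\N$ and $u_1,\ldots,u_k\in D$, the continuous mapping theorem applied to the $\rho$-continuous map $f\mapsto(f(u_1),\ldots,f(u_k))$ gives $(X_{N_j}(u_1),\ldots,X_{N_j}(u_k))\xRightarrow[j\to\infty]{}(Y(u_1),\ldots,Y(u_k))$, while the hypothesis gives convergence of the same sequence to $(X(u_1),\ldots,X(u_k))$. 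By uniqueness of weak limits in $\R^k$ these limits coincide for every finite tuple drawn from $D$, so $Y$ and $X$ have the same finite-dimensional distributions along $D$; by the first step $Y\overset{d}{=}X$. Since every subsequence of $(X_N)_{N\in\N}$ therefore has a further subsequence converging in distribution to $X$, the standard subsequence principle yields $X_N\xRightarrow[N\to\infty]{}X$ with respect to $\rho$.

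The only point requiring genuine care is the compatibility of $\rho$ with the evaluation functionals that is implicitly used in both steps: $\rho$-continuity (or at least $\rho$-Borel measurability) of the maps $\pi_u$, so that the finite-dimensional distributions are meaningful and the continuous mapping theorem applies, together with the fact that these finite-dimensional distributions separate probability laws on $(C(K),\rho)$. For the metric $\rho=\|\,\cdot\,\|_\infty$ occurring in all applications both hold for the elementary reasons indicated above, so no additional hypotheses are needed in practice; in the general formulation these properties are part of what is presupposed when one speaks of ``a random element $X$ in $(C(K),\rho)$'' with prescribed finite-dimensional distributions.
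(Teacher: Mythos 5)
Your proof takes essentially the same route as the paper's: subsequence principle, Prokhorov's theorem for a subsequential limit $Y$, continuous mapping theorem for the evaluation maps, and identification of $Y$ with $X$ via finite-dimensional distributions. You are slightly more careful than the paper at the last step, explaining (via density of $D$ and the Borel structure on $(C(K),\|\cdot\|_\infty)$) why agreement of finite-dimensional distributions along a dense set forces $\mathcal{L}(Y)=\mathcal{L}(X)$, whereas the paper abbreviates this to ``$Y(u)=X(u)$ for all $u$, hence $X=Y$''; this is a refinement, not a different argument.
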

		\begin{proof}
			A (necessary and) sufficient condition for $X_N \xRightarrow[N\to\infty]{} X$ is that each subsequence $X_{N^\prime}$ contains a further subsequence $X_{N^{\prime\prime}}$ with $X_{N^{\prime\prime}} \xRightarrow[N\to\infty]{} X$. Indeed, assume that there is a bounded continuous function $f$ and a subsequence $X_{N^\prime}$ such that 
			$$
			\Big|\int_{\mathbb{X}}f(x)\P_{N^\prime}(\dint x) - \int_{\mathbb{X}}f(x)\P (\dint x)\Big| > \epsilon
			$$
			for some $\epsilon>0$ and all $N^\prime$ where $\P_{N^\prime}$ and $\P$ are the push-forward measures for $X_{N^\prime}$ and $X$, respectively. Then, this subsequence would contain no further converging subsequence. \\
			
			Now, take any subsequence $X_{N^\prime}$. By assumption, this sequence is tight with respect to $\rho$. By Prokhorov's theorem, there is a further subsequence $X_{N^{\prime\prime}}$ and some element $Y$ in the closure of $\mathbb{X}$ such that $X_{N^{\prime\prime}}\xRightarrow[N\to\infty]{} Y$. This implies 
			$$
			(X_{N^{\prime\prime}}(u_1),\ldots,X_{N^{\prime\prime}}(u_k)) \xRightarrow[N\to\infty]{} (Y(u_1),\ldots,Y(u_k))
			$$
			by the continuous mapping theorem. It is left to show that $X=Y$. By taking a subsequence in the assumption, we have that for any collection $u_1,\ldots,u_k \in K$, 
			$$
			(X_{N^{\prime\prime}}(u_1),\ldots,X_{N^{\prime\prime}}(u_k)) \xRightarrow[N\to\infty]{} (X(u_1),\ldots,X(u_k)).
			$$
			By uniqueness of the limit, we get $Y(u) = X(u)$ for all $u\in K$ and hence $X=Y$.
		\end{proof}
		
		%%%%%%%%%%%%%%%%%%%%%%%%%%%%%%%%%%%%%%%%%%%%%%%%%%%%%%%%%%
		\subsection{Volume and support function representation}\label{sec: Volume and support function representation}
		%%%%%%%%%%%%%%%%%%%%%%%%%%%%%%%%%%%%%%%%%%%%%%%%%%%%%%%%%%
		
		The following lemma shows that the support function of a randomly projected $\ell_p^N$-ball involves a sum over random functions depending on columns of a random element in the Stiefel manifold.
		\begin{lemma}\label{lem: support function representation}
			Let $p\in (1,\infty]$, $q$ be its Hölder conjugate and $m\le N\in\N$. For an element $V_N= (v_1,\ldots,v_N)\in\mathbb{V}_{m,N}$, the support function of $N^{\frac{1}{2}-\frac{1}{q}}V_N\mathbb{B}_p^N$ satisfies the identity
			\begin{align}\label{eq: rearanged support equality}
				h(N^{\frac{1}{2}-\frac{1}{q}} V_N \mathbb{B}^N_p, u)^q = \frac{1}{N}\sum_{i=1}^N |\langle \sqrt{N}v_i, u\rangle|^q, \quad u\in\Sm. 
			\end{align}
		\end{lemma}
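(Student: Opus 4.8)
The statement is essentially a direct computation using the definition of the support function together with the duality between the $\ell_p$- and $\ell_q$-norms. The plan is to unwind the left-hand side, pull the scalar $N^{\frac{1}{2}-\frac{1}{q}}$ out, and recognize that the supremum over $\mathbb{B}_p^N$ of a linear functional is exactly the $\ell_q$-norm of the coefficient vector.

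First I would compute $h(N^{\frac{1}{2}-\frac{1}{q}}V_N\mathbb{B}_p^N,u)$ for $u\in\Sm$. By definition,
\[
  h(N^{\frac{1}{2}-\frac{1}{q}}V_N\mathbb{B}_p^N,u) = \sup_{x\in\mathbb{B}_p^N}\big\langle N^{\frac{1}{2}-\frac{1}{q}}V_N x,\,u\big\rangle = N^{\frac{1}{2}-\frac{1}{q}}\sup_{x\in\mathbb{B}_p^N}\big\langle x,\,V_N^*u\big\rangle,
\]
using that $\langle V_N x,u\rangle = \langle x, V_N^*u\rangle$. Next I would invoke the standard fact that for $p\in(1,\infty]$ with Hölder conjugate $q$, one has $\sup_{x\in\mathbb{B}_p^N}\langle x,y\rangle = \|y\|_q$ for every $y\in\R^N$ (this is the definition of the dual norm and the equality case in Hölder's inequality; for $p=\infty$ it reads $\sup_{x\in[-1,1]^N}\langle x,y\rangle=\|y\|_1$). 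Applying this with $y=V_N^*u$ and writing the $i$-th coordinate of $V_N^*u$ as $\langle v_i,u\rangle$ (since $v_1,\dots,v_N$ are the columns of $V_N$, i.e. the rows-as-columns in the $\R^{m\times N}$ convention used here), I get
\[
  h(N^{\frac{1}{2}-\frac{1}{q}}V_N\mathbb{B}_p^N,u) = N^{\frac{1}{2}-\frac{1}{q}}\,\|V_N^*u\|_q = N^{\frac{1}{2}-\frac{1}{q}}\Big(\sum_{i=1}^N|\langle v_i,u\rangle|^q\Big)^{1/q}.
\]

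Finally I would raise both sides to the $q$-th power and redistribute the powers of $N$: $\big(N^{\frac{1}{2}-\frac{1}{q}}\big)^q = N^{\frac{q}{2}-1} = N^{q/2}/N$, so
\[
  h(N^{\frac{1}{2}-\frac{1}{q}}V_N\mathbb{B}_p^N,u)^q = \frac{N^{q/2}}{N}\sum_{i=1}^N|\langle v_i,u\rangle|^q = \frac{1}{N}\sum_{i=1}^N \big|\langle \sqrt{N}\,v_i,u\rangle\big|^q,
\]
since $N^{q/2}|\langle v_i,u\rangle|^q = |\sqrt{N}\langle v_i,u\rangle|^q = |\langle\sqrt{N}v_i,u\rangle|^q$. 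This is exactly \eqref{eq: rearanged support equality}.

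There is no real obstacle here; the only points requiring a word of care are (i) the $p=\infty$ endpoint, where $q=1$ and the duality still holds with the convention $\frac1\infty=0$, and (ii) making sure the convention for how $V_N$ acts (columns $v_i$, so that $(V_N^*u)_i=\langle v_i,u\rangle$) is stated consistently with the rest of the paper, which it is. One could also note that the case $p=1$ is excluded precisely because then $q=\infty$ and $\sup_{x\in\mathbb{B}_1^N}\langle x,y\rangle = \|y\|_\infty$ is not a smooth $q$-th power, so the $U$-statistic structure exploited later would be lost; but since the lemma only claims $p\in(1,\infty]$ this needs no further comment.
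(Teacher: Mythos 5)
Your proof is correct and follows essentially the same route as the paper: unwind the support function, apply the $\ell_p$--$\ell_q$ duality $\sup_{x\in\mathbb{B}_p^N}\langle x,y\rangle=\|y\|_q$ to get $N^{\frac12-\frac1q}\|V_N^*u\|_q$, and then redistribute the powers of $N$ after raising to the $q$-th power. The paper phrases the scaling via the identity $\frac12-\frac1q=\frac1p-\frac12$ but the computation is identical.
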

		\begin{proof}
			Note that $\frac{1}{2}-\frac{1}{q}=\frac{1}{p}-\frac{1}{2}$. We have			
			$$
			h(N^{\frac{1}{p}-\frac{1}{2}} V_N \mathbb{B}^N_p, u) = \sup_{x\in N^{\frac{1}{p}-\frac{1}{2}}V_N \mathbb{B}^N_p}\langle x,u\rangle = N^{\frac{1}{p}-\frac{1}{2}} \sup_{x\in \mathbb{B}^N_p}\langle V_Nx,u\rangle  = N^{\frac{1}{p}-\frac{1}{2}} \sup_{x\in \mathbb{B}^N_p}\langle x,V_N^*u\rangle  = N^{\frac{1}{p}-\frac{1}{2}}  \|V_N^* u\|_q.
			$$
	This can be written as 
			$$
			N^{\frac{1}{p}-\frac{1}{2}} \|V_N^* u\|_q  = N^{\frac{1}{2}-\frac{1}{q}}\Big( \sum_{i=1}^N |\langle v_i,u \rangle|^q \Big)^{\frac{1}{q}} = \Big(\frac{1}{N} \sum_{i=1}^N |\langle \sqrt{N}v_i,u \rangle|^q \Big)^{\frac{1}{q}}.
			$$
			A rearrangement of the terms proves the claim.
		\end{proof}
		
		\noindent We also show a representation for the volume of a random section of an $\ell_p$-ball. We see that this involves a similar sum in the integrand as in the support function of the projection.
		
		\begin{lemma}\label{lem: section formula}
			Fix $N\in\N$, $m\in\{1,\ldots,N\}$ and $p\in (1,\infty]$. Further, let $V_N=(v_1,\dots,v_N)$ be a random element distributed according to the Haar probability measure on $\mathbb{V}_{m,N}$ and $E$ be a random subspace distributed according to $\mu_{m,N}$. Then,
			\begin{align*}
				\vol_m\Big(N^{\frac{1}{p}-\frac{1}{2}}(\mathbb{B}_p^N \cap E)\Big) \overset{d}{=}  \kappa_m \int_{\mathbb{S}^{m-1}} \Big(\frac{1}{N}\sum_{i=1}^N |\langle \sqrt{N}v_i, u\rangle|^p\Big)^{-\frac{m}{p}} \,\sigma(\dint u).
			\end{align*}
		\end{lemma}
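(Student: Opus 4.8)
The plan is to express the volume of a section via a polar-type integration formula and then identify the radial function of the section of a rescaled $\ell_p^N$-ball. Since we want a distributional identity, I will first reduce the section $\B_p^N \cap E$ to an $m$-dimensional object sitting in a copy of $\R^m$: choose an orthonormal basis $v_1,\ldots,v_m$ of $E$ (the rows of a random $V_N\in\mathbb{V}_{m,N}$ whose range is $E^*$), and let $W = V_N|_E : E\to\R^m$ be the associated isometry, as described in the paragraph preceding Lemma~\ref{lem: support function representation}. Since $W$ is an isometry, $\vol_m(\B_p^N\cap E) = \vol_m\big(W(\B_p^N\cap E)\big)$, and $W(\B_p^N\cap E)$ is exactly the convex body $K := \{y\in\R^m : W^{-1}y \in \B_p^N\}$. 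Because $W^{-1}y = \sum_{j=1}^m y_j v_j = V_N^* y$ (using that the $v_j$ are the rows of $V_N$), we get $W^{-1}y = V_N^*y$, so $y\in K$ iff $\|V_N^* y\|_p \le 1$, i.e. $K = \{y : \|V_N^* y\|_p \le 1\}$. The scaling factor $N^{1/p-1/2}$ just scales $K$ by that factor.

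Next I would compute the radial function of $K$ in direction $u\in\Sm$: by definition $\rho(K,u) = \sup\{r\ge 0 : ru\in K\} = \sup\{r\ge 0 : r\|V_N^* u\|_p \le 1\} = \|V_N^* u\|_p^{-1}$, which requires only that $\|V_N^*u\|_p>0$ — true since $V_N^*$ has full column rank $m\le N$. Hence the radial function of $N^{1/p-1/2}K$ is $N^{1/p-1/2}\|V_N^*u\|_p^{-1}$. Now apply the volume integration formula \eqref{eq:volume integration formula radial function}, namely $\vol_m(C) = \kappa_m \int_{\Sm} \rho(C,u)^m\,\sigma(\dint u)$, valid here since $0$ is an interior point of $K$:
\[
  \vol_m\big(N^{\frac{1}{p}-\frac{1}{2}}(\B_p^N\cap E)\big)
  = \kappa_m \int_{\Sm} \Big(N^{\frac{1}{p}-\frac{1}{2}}\|V_N^* u\|_p^{-1}\Big)^m \,\sigma(\dint u)
  = \kappa_m \int_{\Sm} \Big(N^{\frac{1}{2}-\frac{1}{p}}\|V_N^* u\|_p\Big)^{-m}\,\sigma(\dint u).
\]
Finally rewrite the inner factor exactly as in the proof of Lemma~\ref{lem: support function representation}: since $\|V_N^*u\|_p = \big(\sum_{i=1}^N |\langle v_i,u\rangle|^p\big)^{1/p}$, we have $N^{1/2-1/p}\|V_N^* u\|_p = \big(\frac{1}{N}\sum_{i=1}^N |\langle \sqrt{N}v_i,u\rangle|^p\big)^{1/p}$, so raising to the power $-m$ gives $\big(\frac1N\sum_{i=1}^N |\langle\sqrt N v_i,u\rangle|^p\big)^{-m/p}$, which is the claimed integrand.

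The only subtlety — and the one place the argument is more than bookkeeping — is that $\vol_m(\B_p^N\cap E) \overset{d}{=} \vol_m(K)$ with $V_N\sim\Uni(\mathbb{V}_{m,N})$ and $E\sim\mu_{m,N}$ must be justified as a distributional identity, not just for fixed $E$: this follows because pushing $V_N$ forward by ``take the range of $V_N^*$'' yields $\mu_{m,N}$, and for each realization the map $W=V_N|_E$ is a linear isometry hence volume-preserving, exactly as recorded for projections in the displayed identity $\vol_m(VC)\overset{d}{=}\vol_m(C|E)$ in the same paragraph. One should also note $p\in(1,\infty]$ guarantees $\B_p^N$ is a convex body so that $K$ is a convex body with $0$ in its interior and \eqref{eq:volume integration formula radial function} applies; the boundary cases $p=1$ handled in Theorem~\ref{thm: clt for ellp balls 2} for sections would need the same formula, which still holds since $\B_1^N$ is a convex body. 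I expect no real obstacle here; the main care is simply matching the normalizing powers of $N$, which I would double-check against $\frac{1}{2}-\frac{1}{q}=\frac{1}{p}-\frac{1}{2}$ as was used in Lemma~\ref{lem: support function representation}.
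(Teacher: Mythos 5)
Your proof is correct and follows essentially the same route as the paper's: replace $E$ by $\widehat E=\operatorname{Range}(V_N^*)$ (which has the same distribution), identify the radial function of the section as $\|V_N^* u\|_p^{-1}$, and apply the integration formula \eqref{eq:volume integration formula radial function}. The only cosmetic difference is that you first transport $\B_p^N\cap E$ to the convex body $K=\{y\in\R^m:\|V_N^*y\|_p\le 1\}$ via the isometry $V_N|_E$ and then integrate over the standard sphere $\Sm$, whereas the paper integrates intrinsically over $\mathbb{S}_{\widehat E}^{m-1}$ and then changes variables via an orthogonal map; these are the same computation. One small notational slip: you introduce $v_1,\dots,v_m$ as the rows of $V_N$ and later write $\|V_N^*u\|_p=(\sum_{i=1}^N|\langle v_i,u\rangle|^p)^{1/p}$ where the $v_i$ must now be the $N$ columns of $V_N$ as in the lemma's statement; both usages are individually correct, but the overloading should be resolved.
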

		\begin{proof} Let $\widehat{E} =  \operatorname{Range}(V_N^*)$ be the range of the linear operator $V_N^*$. Then we note that due to the uniqueness of the Haar measure on $ \mathbb{G}_{m,N}$, $\widehat{E}\in \mathbb{G}_{m,N}$ has distribution $\mu_{m,N}$, i.e.,
			$$
			\vol_m\big(N^{\frac{1}{p}-\frac{1}{2}}(\mathbb{B}_p^N \cap E)\big) \overset{d}{=} \vol_m\big(N^{\frac{1}{p}-\frac{1}{2}}(\mathbb{B}_p^N \cap \widehat{E})\big).
			$$
			For $F\in \mathbb{G}_{m,N}$ let us write $\mathbb{S}_{F}^{m-1}$ and $\sigma_{F}$ for the unit sphere and the normalized spherical Lebesgue measure in $F$. For the distance of projections of $\ell_p^N$-balls to one-dimensional subspaces we have the identity
			\begin{align}\label{eq: 1 dim projection identity}
				\rho\big(\mathbb{B}_p^N \cap F, \theta\big) = \rho\big(\mathbb{B}_p^N \cap \operatorname{span}(\theta), \theta\big) = \Big\|\frac{\theta}{\|\theta\|_p}\Big\|_2 = \frac{1}{\|\theta\|_p}
			\end{align} 
			for $\theta\in\mathbb{S}_{F}^{m-1}$. Using \eqref{eq:volume integration formula radial function} in the subspace $\widehat{E}$, we  obtain
			\begin{align*}
				\vol_m\big(N^{\frac{1}{p}-\frac{1}{2}}(\mathbb{B}_p^N \cap \widehat{E}) \big) &= \kappa_m \int_{\mathbb{S}_{\widehat{E}}^{m-1}}N^{\frac{m}{p}-\frac{m}{2}}\rho\big(\mathbb{B}_p^N \cap \widehat{E},u\big)^m\,\sigma_{\widehat{E}}(\dint u).
			\end{align*}
			Let $A$ be distributed according to the Haar probability measure on $O(N)$. We can view $V_N$ as the first $m$ columns of $A$ (which is also distributed according to Haar measure). Using this construction, we see that $A$ maps the standard basis of $\R^m$, embedded into $\R^N$, into the rows of $V_N$ and hence $A(\mathbb{S}^{m-1}) = \mathbb{S}_{\widehat{E}}^{m-1}$. By first applying the transformation $A$ to the integral and then using \eqref{eq: 1 dim projection identity}, we get
			\begin{align*}
				& \kappa_m \int_{\mathbb{S}_{\widehat{E}}^{m-1}}N^{\frac{m}{2}-\frac{m}{q}}\rho\big(\mathbb{B}_p^N \cap  \widehat{E},u\big)^m\,\sigma_{\widehat{E}}(\dint u) = \kappa_m \int_{\mathbb{S}^{m-1}}N^{\frac{m}{p}-\frac{m}{2}}\rho\Big(\mathbb{B}_p^N \cap \widehat{E}, Au \Big)^m\,\sigma(\dint u)\\
				&= \kappa_m \int_{\mathbb{S}^{m-1}}N^{\frac{m}{p}-\frac{m}{2}}\Big\|V_N^* u\Big\|_p^{-m}\,\sigma(\dint u)= \kappa_m \int_{\mathbb{S}^{m-1}} \Big(\frac{1}{N}\sum_{i=1}^N |\langle \sqrt{N}v_i, u\rangle|^p\Big)^{-\frac{m}{p}} \,\sigma(\dint u).
			\end{align*}
			This completes the argument.
		\end{proof}

		%%%%%%%%%%%%%%%%%%%%%%%%%%%%%%%%%%%%%%%%%%%%%%%%%%%%
		\subsection{Calculation of Hadamard differentials}\label{sec: Calculation of Hadamard differentials}
		%%%%%%%%%%%%%%%%%%%%%%%%%%%%%%%%%%%%%%%%%%%%%%%%%%%%
		
		\noindent We calculate the Hadamard derivative of $f\mapsto f^\alpha$. This will be needed to transform the limit theorems for the random sums of non i.i.d.~elements to the volume of the random section and projection.
		
		\begin{lemma}\label{lem: hadamard dir exp alpha}
			The map $\Phi: (C(K), \|\,\cdot\,\|_\infty) \to (C(K), \|\,\cdot\,\|_\infty)$, $\Phi(f) = f^\alpha$ for some $\alpha\in \R$ and some compact set $K\subseteq \R^m$ is Hadamard differentiable at any point $f$ such that $f(x) \not= 0$ for all $x$ with derivative $\dint\Phi(f)[h] = \alpha f^{\alpha-1}h$.
		\end{lemma}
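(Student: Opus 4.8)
The plan is to verify the Hadamard differentiability directly from the definition. Fix $f \in C(K)$ with $f(x) \neq 0$ for all $x \in K$, and fix a direction $h \in C(K)$, a sequence $h_N \to h$ in $\|\,\cdot\,\|_\infty$, and a sequence $t_N \to 0$ with $t_N \neq 0$. I must show that
\[
  \Bigl\| \frac{(f + t_N h_N)^\alpha - f^\alpha}{t_N} - \alpha f^{\alpha-1} h \Bigr\|_\infty \longrightarrow 0.
\]
The first step is to record that, since $K$ is compact and $f$ is continuous and nowhere zero, $f$ is bounded away from $0$: there is $c > 0$ with $|f(x)| \geq c$ for all $x \in K$, and also $|f(x)| \leq M$ for some $M$. (If $K$ is allowed to be disconnected so that $f$ changes sign, one interprets $f^\alpha$ componentwise on each sign component, or simply restricts to $f > 0$; I will assume $f$ has constant sign, which is the case in all applications in this paper.) Because $\|t_N h_N\|_\infty \to 0$, for $N$ large the function $f + t_N h_N$ also takes values in a compact subset of $(0,\infty)$ (or $(-\infty,0)$) bounded away from $0$, uniformly in $x$.

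The second step is a pointwise Taylor estimate made uniform. For each fixed $x$, write $a = f(x)$ and $b = t_N h_N(x)$, and apply Taylor's theorem with Lagrange remainder to $s \mapsto (a+s)^\alpha$: there is $\xi$ between $a$ and $a+b$ with
\[
  (a+b)^\alpha = a^\alpha + \alpha a^{\alpha-1} b + \tfrac{1}{2}\alpha(\alpha-1)\xi^{\alpha-2} b^2.
\]
Dividing by $t_N$ gives
\[
  \frac{(f + t_N h_N)^\alpha(x) - f^\alpha(x)}{t_N} = \alpha f^{\alpha-1}(x) h_N(x) + \tfrac{1}{2}\alpha(\alpha-1)\,\xi(x)^{\alpha-2}\, t_N\, h_N(x)^2.
\]
Since $\xi(x)$ lies between $f(x)$ and $f(x) + t_N h_N(x)$, for $N$ large it satisfies $c/2 \le |\xi(x)| \le 2M$ uniformly, so $|\xi(x)^{\alpha-2}|$ is uniformly bounded by a constant depending only on $c, M, \alpha$; and $|h_N(x)^2| \le (\|h\|_\infty + 1)^2$ for $N$ large. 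Hence the remainder term is bounded in sup-norm by $C |t_N| \to 0$. It remains to handle $\alpha f^{\alpha-1} h_N - \alpha f^{\alpha-1} h = \alpha f^{\alpha-1}(h_N - h)$, whose sup-norm is at most $|\alpha| \|f^{\alpha-1}\|_\infty \|h_N - h\|_\infty \to 0$. Combining the two estimates via the triangle inequality yields the claim, and reading off the linear term identifies $\dint\Phi(f)[h] = \alpha f^{\alpha-1} h$.

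The only genuinely delicate point — the ``hard part'' — is the uniformity of the remainder: one must ensure that the intermediate points $\xi(x)$ from the Lagrange form stay in a region where $t \mapsto t^{\alpha-2}$ is bounded, which is exactly why the hypothesis $f(x) \neq 0$ on the \emph{compact} set $K$ (giving a uniform lower bound $c$) is essential and cannot be weakened to $f$ merely nonvanishing pointwise on a noncompact set. Everything else is a routine triangle-inequality bookkeeping. I would also remark at the end that $\dint\Phi(f)$ is manifestly linear in $h$ and bounded (hence continuous), consistent with the general fact noted after the definition of Hadamard differentiability.
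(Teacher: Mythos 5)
Your proof is correct and follows the same Taylor-expansion strategy as the paper's: the paper factors out $f^\alpha$ and invokes $(1+x)^\alpha = 1 + \alpha x + o(x^2)$ together with boundedness of $f$, whereas you apply Taylor's theorem in the Lagrange form directly, which makes the uniformity of the remainder over $K$ explicit via the bounds $0 < c \le |f| \le M$ coming from compactness. Your remark that $f$ might change sign on a disconnected $K$ (so that $f^\alpha$ for non-integer $\alpha$ is not even well defined) is a genuine subtlety that the paper's statement and proof gloss over; as you note it is harmless in the applications, where $\Phi$ is only ever evaluated near a positive constant function.
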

		\begin{proof}
			We need to show that
			$$
			\frac{1}{t_N}(\Phi(f+ t_N h_N) - \Phi(f)) \xrightarrow{N\to\infty} \dint\Phi(f)[h]
			$$
			for $t_N\to 0$ and for any $h,h_1,h_2,\ldots\in C(K)$ such that $h_N \to h$ in $\|\,\cdot\,\|_{\infty}$. Using $(1+ x)^\alpha = 1 + \alpha x + o(x^2)$ for $x\to 0$ {and the fact that $f$ is bounded on $K$,} we get 
			\begin{align*}
				&\frac{1}{t_N}\Big(\Phi(f+ t_N h_N) - \Phi(f)\Big) = \frac{1}{t_N}\Big(f^\alpha\Big(1+ t_N\frac{h_N}{f}\Big)^\alpha - f^\alpha\Big)\\
				&= \frac{1}{t_N}\Big(f^\alpha\Big(1+ \alpha t_N\frac{h_N}{f} + o(t_N^2h_N^2) -1\Big)\Big) = \alpha f^{\alpha-1} h_N + f^\alpha o(t_Nh_N^2) \xrightarrow{N\to\infty} \alpha f^{\alpha-1}h = \dint\Phi(f)[h].
			\end{align*}  
			Here $f_N = o(h_N)$ means that $\frac{f_N}{h_N} \to 0$ with respect to $\|\,\cdot\,\|_\infty$.
		\end{proof}
		
		\noindent Let $C\subseteq \R^m$ be a convex body. By \cite[Page 54 and Equation (1.52)]{schneider2014convex}, we have 
		$$
		\rho(C,x) = \inf_{\langle u,x \rangle > 0, u\in \Sm} \frac{h(C,u)}{\langle x,u\rangle}.
		$$
		Remember that $\rho(C,\,\cdot\,)$ is the radial function of $C$. The map $\Phi: (C(\Sm), \|\,\cdot\,\|_\infty) \to (C(\Sm), \|\,\cdot\,\|_\infty)$ with 
		\begin{align}\label{eq: def of Phi}
			\Phi(f) = \Big(x\mapsto \inf_{\langle u,x \rangle > 0, u\in \Sm} \frac{f(u)}{\langle x,u\rangle}\Big)
		\end{align}
		is Hadamard differentiable in any constant $\mu>0$ as the next lemma shows. Together with Proposition \ref{prop: delta method for LDPs} and Proposition \ref{prop: delta method infinite dim} we see that a limit theorem for the support function holds in the same way for the radial function.
		
		\begin{lemma}\label{lem: had diff for sup to rad}
			For $M\in(0,\infty)$ let $f:\Sm\to (0,\infty)$, $x\mapsto M$ be a positive constant function. The map $\Phi$ defined by \eqref{eq: def of Phi} is Hadamard differentiable at $f$ with derivative $\dint \Phi(f)[h] = h$. 
		\end{lemma}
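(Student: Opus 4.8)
The first step is to identify $\Phi(f)$ at the base point. For the constant function $f\equiv M$ one has, for every $x\in\Sm$,
\[
  \Phi(f)(x)=\inf_{\langle u,x\rangle>0,\,u\in\Sm}\frac{M}{\langle x,u\rangle}=\frac{M}{\sup_{u\in\Sm}\langle x,u\rangle}=M,
\]
the supremum being attained at $u=x$; thus $\Phi(f)\equiv M$ (it is the radial function of $M\mathbb B_2^m$). Now fix $h\in C(\Sm)$, a sequence $t_N\to 0$ with $t_N\neq 0$, and functions $h_N\to h$ in $\|\,\cdot\,\|_\infty$, and set $g_N:=f+t_Nh_N$. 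For $N$ large, $\|t_Nh_N\|_\infty<M/2$, so $g_N>M/2>0$ and $\Phi(g_N)$ is the radial function of the convex body $\{y\in\R^m:\langle y,u\rangle\le g_N(u)\ \forall u\in\Sm\}$, which is sandwiched between two concentric Euclidean balls and hence is a genuine element of $C(\Sm)$. Since $h_N\to h$, it suffices to show $\|\Phi(g_N)-\Phi(f)-t_Nh_N\|_\infty=o(|t_N|)$; indeed this gives $\|t_N^{-1}(\Phi(g_N)-\Phi(f))-h\|_\infty\le o(1)+\|h_N-h\|_\infty\to 0$, proving Hadamard differentiability with $\dint\Phi(f)[h]=h$.

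The upper bound is immediate: evaluating the defining infimum at $u=x$ yields $\Phi(g_N)(x)\le g_N(x)=M+t_Nh_N(x)$ for every $x$. For the matching lower bound I would use, for any $x$ and any $u$ with $c:=\langle x,u\rangle>0$, the identity
\[
  \frac{g_N(u)}{c}-\bigl(M+t_Nh_N(x)\bigr)=\frac{(1-c)\bigl(M+t_Nh_N(x)\bigr)+t_N\bigl(h_N(u)-h_N(x)\bigr)}{c},
\]
noting that $(1-c)(M+t_Nh_N(x))\ge 0$ once $N$ is large. Fix $\eps>0$. Splitting according to the size of $\|u-x\|$ (and recalling $\|u-x\|^2=2(1-c)$): if $\|u-x\|>\delta$ for a threshold $\delta=\delta(\eps)\in(0,1]$ to be chosen, then $(1-c)(M+t_Nh_N(x))\ge \delta^2M/4$, which for $N$ large dominates $|t_N|\,\|h_N(u)-h_N(x)\|$, so the displayed quantity is positive; if $\|u-x\|\le\delta$ then $c\ge 1/2$ and, using the modulus of continuity $\omega_h$ of $h$ on the compact sphere together with $|h_N(\cdot)-h(\cdot)|\le\|h_N-h\|_\infty$, one gets $|h_N(u)-h_N(x)|\le 2\|h_N-h\|_\infty+\omega_h(\delta)$, hence the displayed quantity is at least $-|t_N|\bigl(4\|h_N-h\|_\infty+2\omega_h(\delta)\bigr)$. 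Choosing $\delta$ with $2\omega_h(\delta)\le\eps/2$ and then $N$ large so that $4\|h_N-h\|_\infty\le\eps/2$, taking the infimum over $u$ gives $\Phi(g_N)(x)\ge M+t_Nh_N(x)-\eps|t_N|$ uniformly in $x$.

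Combining the two bounds yields $\|\Phi(g_N)-\Phi(f)-t_Nh_N\|_\infty\le \eps|t_N|$ for all large $N$; since $\eps>0$ was arbitrary this is the required $o(|t_N|)$ estimate, and the proof concludes as in the first paragraph. The only genuinely delicate point is the lower bound for $u$ near $x$: there the leading linear term $(1-c)M$ vanishes and the crude bound $|h_N(u)-h_N(x)|\le 2\|h_N\|_\infty$ is useless, so one must exploit the uniform continuity of the \emph{limit} $h$ (through $\omega_h$) together with $h_N\to h$ uniformly — this is exactly what makes the estimate on $\{\|u-x\|\le\delta\}$ of order $o(|t_N|)$ rather than $O(|t_N|)$.
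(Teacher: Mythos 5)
Your proof is correct, and it takes a genuinely different route from the paper's argument. The paper introduces the pointwise minimizer $u_N(x)$ in the defining infimum, argues that $u_N\to x$, and then shows by contradiction that $1-\langle x,u_N\rangle = o(t_N)$ uniformly in $x$, after which the expression at the minimizer is seen to converge to $h$. You avoid the minimizer entirely and use a direct sandwich argument: the trivial upper bound $\Phi(g_N)(x)\le g_N(x)=M+t_Nh_N(x)$ from evaluating at $u=x$, and a matching lower bound obtained by rewriting $\tfrac{g_N(u)}{c}-(M+t_Nh_N(x))$ as $\tfrac{(1-c)(M+t_Nh_N(x))+t_N(h_N(u)-h_N(x))}{c}$ and splitting into a far region ($\|u-x\|>\delta$, where the positive term $(1-c)(M+t_Nh_N(x))\gtrsim \delta^2 M/4$ dominates the $O(|t_N|)$ error) and a near region ($\|u-x\|\le\delta$, where $c\ge 1/2$ and the modulus of continuity of the limit $h$ together with $\|h_N-h\|_\infty\to 0$ control $h_N(u)-h_N(x)$ to size $o(1)$, hence error $o(|t_N|)$). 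Both proofs exploit the same two facts — the infimum concentrates near $u=x$, and the deviation $h_N(u)-h_N(x)$ is controlled via uniform continuity of $h$ — but your sandwich argument is somewhat cleaner and more robust: it treats $t_N$ of either sign on the same footing and sidesteps the sign-sensitive inequalities that arise when one estimates $\tfrac{h_N(u_N)}{\langle x,u_N\rangle}$ at the minimizer, which in the paper's version implicitly requires $t_N>0$ (and tacitly $h_N(u_N)\ge 0$). A small typographical slip in your write-up: you wrote $|t_N|\,\|h_N(u)-h_N(x)\|$ where $|h_N(u)-h_N(x)|$ is meant; the argument is unaffected.
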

		\begin{proof} Going with the definition, let $t_N \to 0$ and $h_N \to h$ in the uniform norm on $C(\Sm)$. Notice that for any $x\in\Sm$, using the definition in \eqref{eq: def of Phi}, we have
			\begin{align}\label{eq:hadamard diff}
				\frac{\Phi(f + t_Nh_N)(x) - \Phi(f)(x)}{t_N} & = \frac{1}{t_N}\Bigg(\inf_{\langle u,x \rangle > 0, u\in \Sm} \frac{M + t_Nh_N(u)}{\langle x,u\rangle} -  \inf_{\langle u,x \rangle > 0, u\in \Sm} \frac{M}{\langle x,u\rangle}\Bigg)\cr
				&= \frac{1}{t_N}\Bigg(\inf_{\langle u,x \rangle > 0, u\in \Sm} \frac{M + t_Nh_N(u)}{\langle x,u\rangle} -  M\Bigg)\cr
				&= \frac{1}{t_N}\Bigg( \inf_{\langle u,x \rangle > 0, u\in \Sm} \frac{M + t_Nh_N(u) - \langle x,u \rangle M}{\langle x,u\rangle} \Bigg) \cr
				& = \inf_{\langle u,x \rangle > 0, u\in \Sm} M \frac{1-\langle x,u\rangle}{t_N\langle x,u\rangle} + \frac{h_N(u)}{\langle x,u\rangle}\cr
				&= M \frac{1-\langle x,u_N \rangle}{t_N\langle x,u_N \rangle} + \frac{h_N(u_N)}{\langle x,u_N\rangle},
			\end{align}
			where $u_N:=u_N(x) \in \Sm$ is the sequence of minimizers, which exists since the infimum is attained at some $u_N \not= 0$. Further, it holds that $u_N \to x$, since otherwise $\frac{1-\langle x,u_N \rangle}{t_N\langle x,u_N \rangle}\to\infty$ as $N\to\infty$. 
			
			We want to show that $1-\langle x,u_N \rangle = o(t_N)$ uniformly over all $x\in\Sm$. Assume now that for some $x\in\Sm$, $\langle x,u_N \rangle \le 1 - ct_N$ for some $c\in (0,1)$ for $N\in\N$ big enough along a subsequence. Then for $N$ being in the subsequence,
			\begin{align*}
				M \frac{1-\langle x,u_N \rangle}{t_N\langle x,u_N \rangle} + \frac{h_N(u_N)}{\langle x,u_N\rangle}\ge M \frac{c}{1-ct_N} + \frac{h_N(u_N)}{1-ct_N}.
			\end{align*}
			It is easy to see that $M \frac{c}{1-ct_N} + \frac{h_N(u_N)}{1-ct_N} > h_N(x)$ for $N$ large enough in that subsequence since $M\cdot c\in(0,\infty)$, which is the value attained  at $u=x$ by the function over which the infimum in \eqref{eq:hadamard diff} is taken. Hence, this is a contradiction to $u_N$ being the minimizer. We have thus shown that indeed $1-\langle x,u_N \rangle = o(t_N)$ uniformly over all $x\in\Sm$. 

In particular, this implies that for all $x\in\Sm$, we have $\|u_N-x\|_2^2=2(1-\langle x, u_N\rangle )=o(t_N)$. Thus, since 
		\[
   	           \sup_{x\in\Sm} |h_N(u_N)-h(x)| \leq \sup_{x\in\Sm}|h_N(u_N)-h_N(x)| + \|h_N-h\|_{\infty},
		\]
combining that $1-\langle x,u_N \rangle = o(t_N)$ uniformly over all $x\in\Sm$ with the uniform continuity of $h_N$ and the uniform convergence of $h_N$ to $h$, we obtain 
%\textcolor{red}{($u_N=u_N(x)$ and so do we have uniform convergence of $|h_N(u_N(\cdot))/(1-o(t_N)) - h(\cdot)|\to 0$?)}
			\begin{align*}
				&\frac{1}{t_N}\Big(\Phi(f + t_Nh_N) - \Phi(f) \Big) = M \frac{o(t_N)}{t_N(1-o(t_N))} + \frac{h_N(u_N)}{1-o(t_N)} = M \frac{o(1)}{1-o(t_N)} + \frac{h_N(u_N)}{1 - o(t_N)} \xrightarrow{N \to \infty} h
			\end{align*} 
in the uniform norm $\|\,\cdot\,\|_{\infty}$. This completes the proof. 
		\end{proof}
		
		%%%%%%%%%%%%%%%%%%%%%%%%%%%%%%%%%%%%%%%%%%%%%%%%%%%%%%%%
		\subsection{Computations of moments and other preliminary results}\label{sec: Computations of moments}
		%%%%%%%%%%%%%%%%%%%%%%%%%%%%%%%%%%%%%%%%%%%%%%%%%%%%%%%%
		
		The next lemma gives a moment computation that will be used frequently throughout the rest of this paper and a justification to change an $N$-dependent normalization to a constant one.
		\begin{lemma}\label{lem: asym expectation}
			Let $q>-1$, $v_i$ be a column of $V_N\sim \operatorname{Unif}(\mathbb{V}_{m,N})$ for fixed $m\le N$ and $g\sim \mathcal{N}(0,1)$. Then
			\begin{align*}
				\E[|g|^q] = \frac{2^{\frac{q}{2}}\Gamma(\frac{q+1}{2})}{\sqrt{\pi}}
			\end{align*}
			and 
			$$
			\sup_{u\in\Sm} \sqrt{N}|\E[|g|^q] -\E[|\langle \sqrt{N}v_i, u\rangle|^q])| \to 0
			$$
			as $N\to\infty$
		\end{lemma}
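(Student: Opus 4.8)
The plan is to treat the two assertions separately: the first is an exact integral, and the second reduces — after a symmetry argument — to the asymptotics of a ratio of Gamma functions.

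For the identity $\E[|g|^q]=2^{q/2}\Gamma(\tfrac{q+1}{2})/\sqrt\pi$ I would simply write
$$\E[|g|^q]=\sqrt{\tfrac{2}{\pi}}\int_0^\infty x^q e^{-x^2/2}\,\dint x$$
and substitute $t=x^2/2$, turning the integral into $\tfrac{2^{q/2}}{\sqrt\pi}\int_0^\infty t^{(q-1)/2}e^{-t}\,\dint t=\tfrac{2^{q/2}}{\sqrt\pi}\Gamma\bigl(\tfrac{q+1}{2}\bigr)$; the hypothesis $q>-1$ is precisely what makes this converge at the origin.

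For the second assertion the first step is to eliminate the supremum. Since $\operatorname{Unif}(\mathbb{V}_{m,N})$ is invariant under left multiplication by $O(m)$ and right multiplication by $O(N)$, for every $u\in\Sm$ and every $i$ one has the distributional identity $\langle v_i,u\rangle\overset{d}{=}\langle v_i,e_1\rangle=(V_N)_{1,1}$: an orthogonal $U$ with $U^{*}u=e_1$ removes the dependence on $u$, and a coordinate permutation in $O(N)$ removes the dependence on $i$. Hence the supremand does not depend on $u$ and it suffices to bound $\sqrt N\,\bigl|\E[|g|^q]-\E[|\sqrt N\,(V_N)_{1,1}|^q]\bigr|$. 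Realizing $V_N$ as the first $m$ rows of a Haar-distributed $A\in O(N)$ (as in the proof of Lemma \ref{lem: section formula}) identifies $(V_N)_{1,1}$ with the first coordinate $\theta_1$ of a uniform random point on $\mathbb{S}^{N-1}$, whose law on $(-1,1)$ has density proportional to $(1-t^2)^{(N-3)/2}$. Substituting $s=t^2$ in the defining integral produces a Beta integral, giving the closed form
$$\E\bigl[|\sqrt N\,\theta_1|^q\bigr]=\frac{\Gamma(\frac{q+1}{2})}{\sqrt\pi}\,N^{q/2}\,\frac{\Gamma(N/2)}{\Gamma(\frac{N+q}{2})},$$
and therefore, using also the first part,
$$\sqrt N\,\Bigl|\E[|g|^q]-\E\bigl[|\langle\sqrt N\,v_i,u\rangle|^q\bigr]\Bigr|=\frac{\Gamma(\frac{q+1}{2})}{\sqrt\pi}\,\sqrt N\,\Bigl|2^{q/2}-N^{q/2}\,\frac{\Gamma(N/2)}{\Gamma(\frac{N+q}{2})}\Bigr|.$$

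It then remains to show the right-hand side tends to $0$, and this is the only step with any content. It follows from the standard asymptotics for ratios of Gamma functions, $\Gamma(x)/\Gamma(x+a)=x^{-a}\bigl(1+O(x^{-1})\bigr)$ as $x\to\infty$: applied with $x=N/2$ and $a=q/2$ it gives $N^{q/2}\,\Gamma(N/2)/\Gamma(\frac{N+q}{2})=2^{q/2}\bigl(1+O(N^{-1})\bigr)$, so that $\bigl|2^{q/2}-N^{q/2}\Gamma(N/2)/\Gamma(\frac{N+q}{2})\bigr|=O(N^{-1})$ and multiplication by $\sqrt N$ yields $O(N^{-1/2})\to0$. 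Since we only need an $o(N^{-1/2})$ bound rather than the sharp rate, even a termwise application of Stirling's formula, or the log-convexity bounds for $\Gamma$, is comfortably enough here; everything else is exact computation.
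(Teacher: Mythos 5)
Your proof is correct and follows essentially the same route as the paper: an elementary Gamma integral for the first identity, then rotational invariance to reduce the second assertion to a moment of the first coordinate of a uniform point on $\mathbb{S}^{N-1}$, evaluated via a Beta integral and handled by Gamma-ratio asymptotics. The only (cosmetic) difference is that you invoke the standard expansion $\Gamma(x)/\Gamma(x+a)=x^{-a}\bigl(1+O(x^{-1})\bigr)$ directly, whereas the paper rederives the needed $O(N^{-1/2})$ decay by hand from Stirling's formula.
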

		\begin{proof}
			\noindent By using symmetry we get
			\begin{align*}
				\E|g|^q &= (2\pi)^{-\frac{1}{2}}\int_{\R} |x|^q e^{-\frac{|x|^2}{2}} \,\dint x = 2 (2\pi)^{-\frac{1}{2}} \int_0^\infty r^{q} e^{-\frac{r^2}{2}} \,\dint r= \frac{2^{\frac{q}{2}}}{\sqrt{\pi}}  \Gamma\Big(\frac{1+q}{2}\Big).
			\end{align*}
			Consider $\E[|\langle v_i, u\rangle|^q)]$. By rotational invariance of the Haar measure on $\mathbb{V}_{m,N}$, we have $\E[|\langle v_i, u\rangle|^q)]$ = $\E[|v_{i,1}|^q)]$, where $v_{i,1}$ is the first entry of $v_i$. So the statement does not depend on $u$ or $m$. The random variable has the same distribution as the first coordinate $U_1$ from a vector $U$ uniformly distributed on $\mathbb{S}^{N-1}$. It is well known that $U_1^2$ is beta distributed with parameters $\frac{1}{2}$ and $\frac{N-1}{2}$. Using the moments of the beta distribution and its relation to the gamma function,
			$$
			\E[|v_{i,1}|^q)] = \E[(U_1^2)^{\frac{q}{2}}]  = \frac{\operatorname{Beta}(\frac{q+1}{2}, \frac{N-1}{2})}{\operatorname{Beta}(\frac{1}{2}, \frac{N-1}{2})} = \frac{\Gamma(\frac{q+1}{2})\Gamma(\frac{N-1}{2})\Gamma(\frac{N}{2})}{\Gamma(\frac{N+q}{2})\Gamma(\frac{1}{2})\Gamma(\frac{N-1}{2})} = \frac{\Gamma(\frac{q+1}{2})\Gamma(\frac{N}{2})}{\Gamma(\frac{N+q}{2})\sqrt{\pi}}.
			$$
			By Stirling's formula, we get 
			\begin{align*}
				&\sqrt{N}\Big(\frac{N^{\frac{q}{2}}\Gamma(\frac{N}{2})}{\Gamma(\frac{N+q}{2})} - 2^{\frac{q}{2}}\Big)= \frac{\sqrt{N}N^{\frac{q}{2}}\sqrt{\frac{4\pi}{N}}\big(\frac{N}{2e}\big)^\frac{N}{2}(1+O(\frac{1}{N}))}{\sqrt{\frac{4\pi}{N+q}}\big(\frac{N+q}{2e}\big)^\frac{N+q}{2}(1+O(\frac{1}{N+q}))}- \sqrt{N}2^{\frac{q}{2}}\\
				&= \frac{1+O(\frac{1}{N}))}{(1+O(\frac{1}{N+q}))}\Big(1+\frac{q/2}{N/2}\Big)^{-N/2}(2e)^{\frac{q}{2}}\sqrt{N+q}-\sqrt{N}2^{\frac{q}{2}}\\
				&\sim e^{-\frac{q}{2}}(2e)^{\frac{q}{2}}\sqrt{N+q} - \sqrt{N}2^{\frac{q}{2}} = 2^{\frac{q}{2}}(\sqrt{N+q}-\sqrt{N}) \to 0,
			\end{align*}
			which yields the claim.
		\end{proof}
		
		The following computations will be used in the proof of the central limit theorem and, more precisely, for the calculation of the variance. 
		
		\begin{lemma}\label{lem: exact_expectations}
			For $m\in\N$, let $g=(g_1,\ldots,g_m)\sim \mathcal{N}(0,\operatorname{Id}_m)$, $u=(u_1,\ldots,u_m),v=(v_1,\ldots,v_m)\in\Sm$ and $q>-1$. Then
			\begin{enumerate}
				\large
				\item $
				\E[|\langle g,u\rangle|^{q-2} \langle g,u\rangle g_i] = u_i \frac{2^{q/2}\Gamma(\frac{q+1}{2})}{\sqrt{\pi}} = u_i\E[|g_1|^q],
				$
				\item $
				\E[|\langle g,u\rangle|^q g_i^2] = \frac{2^{q/2}\Gamma(\frac{q+1}{2})(1+qu_i^2)}{\sqrt{\pi}} = (1+qu_i^2)\E[|g_1|^q]
				$
				\item For $i\not=j$,
				$
				\E[|\langle g,u\rangle|^q g_ig_j] = u_iu_j\frac{q2^{q/2}\Gamma(\frac{q+1}{2})}{\sqrt{\pi}} = qu_iu_j\E[|g_1|^q]
				$
				\item For $u\not=v$,
				$$
				\E[|\langle g,u\rangle\langle g,v\rangle|^q] = \frac{2^q\Gamma(\frac{q+1}{2})^2}{\pi} {}_2 F_1(-q/2,-q/2, 1/2, \langle u,v\rangle^2)
				$$
				where ${}_2F_1$ denotes the Gauss hypergeometric function.
				\item For $u=v$,
				$
				\E[|\langle g,u\rangle\langle g,u\rangle|^q] = \frac{2^q \Gamma(\frac{1}{2}+q)}{\sqrt{\pi}}.
				$
			\end{enumerate}
		\end{lemma}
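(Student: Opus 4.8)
The plan is to dispatch identities (1)--(3) and (5) by elementary Gaussian algebra and to concentrate the real effort on (4). Throughout I abbreviate $\xi:=\langle g,u\rangle$ and decompose $g=\xi u+w$, where $w$ is the orthogonal projection of $g$ onto $u^{\perp}$. Since $g\sim\mathcal N(0,\operatorname{Id}_m)$ and $\|u\|=1$, the scalar $\xi\sim\mathcal N(0,1)$ is independent of the centered Gaussian vector $w$, with $\Cov(w)=\operatorname{Id}_m-uu^{*}$, so $\E[w_i]=0$ and $\E[w_iw_j]=\delta_{ij}-u_iu_j$. I will also use the one-line recursion $\E[|g_1|^{r+2}]=(r+1)\E[|g_1|^{r}]$, valid for $r>-1$, which follows at once from the closed form $\E[|g_1|^{r}]=2^{r/2}\Gamma(\tfrac{r+1}{2})/\sqrt\pi$ recorded in Lemma~\ref{lem: asym expectation}. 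Identity (5) is then immediate: $\langle g,u\rangle\sim\mathcal N(0,1)$, so $\E[|\langle g,u\rangle|^{2q}]=\E[|g_1|^{2q}]=2^{q}\Gamma(q+\tfrac12)/\sqrt\pi$ by Lemma~\ref{lem: asym expectation} applied with exponent $2q$.

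For (1)--(3) I would write $g_i=u_i\xi+w_i$ (and likewise $g_j=u_j\xi+w_j$), multiply out, and drop every term that is odd in $w$, using independence and $\E[w]=0$. For (1) this leaves $\E[|\xi|^{q-2}\xi\,g_i]=u_i\,\E[|\xi|^{q-2}\xi^{2}]=u_i\,\E[|\xi|^{q}]=u_i\E[|g_1|^q]$. For (2) it leaves $\E[|\xi|^{q}g_i^{2}]=u_i^{2}\,\E[|\xi|^{q+2}]+\E[|\xi|^{q}]\,\E[w_i^{2}]$, and applying the recursion together with $\E[w_i^2]=1-u_i^2$ gives $u_i^{2}(q+1)\E[|g_1|^{q}]+(1-u_i^{2})\E[|g_1|^{q}]=(1+qu_i^{2})\E[|g_1|^{q}]$. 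For $i\neq j$ in (3) one obtains $\E[|\xi|^{q}g_ig_j]=u_iu_j\,\E[|\xi|^{q+2}]+\E[|\xi|^{q}]\,\E[w_iw_j]=u_iu_j(q+1)\E[|g_1|^{q}]-u_iu_j\E[|g_1|^{q}]=q\,u_iu_j\E[|g_1|^{q}]$. (In fact (2) and (3) together are the single formula $\E[|\langle g,u\rangle|^{q}g_ig_j]=\E[|g_1|^{q}](\delta_{ij}+q\,u_iu_j)$.)

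For (4) -- the heart of the lemma -- observe that $(\langle g,u\rangle,\langle g,v\rangle)$ is a centered bivariate Gaussian with unit variances and correlation $\rho:=\langle u,v\rangle$, with $|\rho|<1$ since $u\neq v$ and $\|u\|=\|v\|=1$. Writing $\langle g,v\rangle=\rho\,\langle g,u\rangle+\sqrt{1-\rho^{2}}\,\langle g,u^{\perp}\rangle$ with $\langle g,u\rangle,\langle g,u^{\perp}\rangle$ independent standard Gaussians and conditioning on $\langle g,u\rangle=x$, the inner expectation equals $(1-\rho^{2})^{q/2}\,\E[|t+Z|^{q}]$ with $t=\rho x/\sqrt{1-\rho^{2}}$ and $Z\sim\mathcal N(0,1)$. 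I would then invoke the classical closed form for the absolute moments of a noncentral Gaussian,
\[
\E\big[|t+Z|^{q}\big]=\frac{2^{q/2}\Gamma(\tfrac{q+1}{2})}{\sqrt\pi}\;{}_1F_1\!\Big(-\tfrac q2;\tfrac12;-\tfrac{t^{2}}{2}\Big),
\]
expand ${}_1F_1$ as a power series in $t^{2}$ (which is proportional to $x^{2}$), and integrate term by term against $|x|^{q}$ times the standard Gaussian density, using $\E[|X|^{q+2n}]=2^{q/2}2^{n}\Gamma(\tfrac{q+1}{2}+n)/\sqrt\pi$ and $\Gamma(\tfrac{q+1}{2}+n)=(\tfrac{q+1}{2})_n\Gamma(\tfrac{q+1}{2})$. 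Collecting Pochhammer symbols turns the series into a ${}_2F_1$ and produces
\[
\E\big[|\langle g,u\rangle|^{q}|\langle g,v\rangle|^{q}\big]=(1-\rho^{2})^{q/2}\,\frac{2^{q}\Gamma(\tfrac{q+1}{2})^{2}}{\pi}\;{}_2F_1\!\Big(-\tfrac q2,\tfrac{q+1}{2};\tfrac12;\tfrac{-\rho^{2}}{1-\rho^{2}}\Big);
\]
the termwise integration is legitimate by absolute convergence, since all absolute moments of $X$ are finite for $q>-1$ and ${}_1F_1$ is entire. Finally, a Pfaff transformation ${}_2F_1(a,b;c;z)=(1-z)^{-a}\,{}_2F_1(a,c-b;c;\tfrac{z}{z-1})$ with $a=b=-\tfrac q2$, $c=\tfrac12$, $z=\rho^{2}$ (so $c-b=\tfrac{q+1}{2}$ and $\tfrac{z}{z-1}=\tfrac{-\rho^{2}}{1-\rho^{2}}$) shows the right-hand side equals $\tfrac{2^{q}\Gamma(\frac{q+1}{2})^{2}}{\pi}\,{}_2F_1(-\tfrac q2,-\tfrac q2;\tfrac12;\rho^{2})$, which is exactly the value claimed in (4).

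The only genuinely delicate point is (4): recognizing the noncentral absolute moment as a Kummer function and then matching the resulting ${}_2F_1$ with the symmetric hypergeometric of the statement through the correct Pfaff transformation -- the bookkeeping with Pochhammer symbols and the choice of quadratic transformation is where care is needed. An alternative route avoids the Kummer input: pass to polar coordinates $(r,\theta)$ in $\operatorname{span}(u,u^{\perp})$, so that $\E[|\langle g,u\rangle|^{q}|\langle g,v\rangle|^{q}]=\E[r^{2q}]\cdot\tfrac{1}{2\pi}\int_{0}^{2\pi}|\cos\theta|^{q}|\cos(\theta-\alpha)|^{q}\,\dint\theta$ with $\cos\alpha=\rho$ and $\E[r^{2q}]=2^{q}\Gamma(q+1)$, and evaluate the angular integral via the Fourier expansion of $\theta\mapsto|\cos\theta|^{q}$; this leads to the same hypergeometric after essentially identical Gamma-function manipulations. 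Everything outside of (4) is routine Gaussian computation.
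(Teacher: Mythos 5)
Your proof is correct and follows essentially the same route as the paper's: parts (1)--(3) are handled by the standard Gaussian conditional/orthogonal decomposition (writing $g_i=u_i\langle g,u\rangle+w_i$ is just the explicit form of $\E[g_i\mid\langle g,u\rangle]=u_i\langle g,u\rangle$ used in the paper), part (5) by rotational invariance, and part (4) by conditioning one inner product on the other, invoking the Kummer ${}_1F_1$ formula for noncentral absolute moments, integrating term by term to get a ${}_2F_1$, and finishing with the same Pfaff transformation. The only cosmetic difference is that you condition on $\langle g,u\rangle$ where the paper conditions on $\langle g,v\rangle$, which is immaterial by symmetry.
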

		
		\begin{proof}
			The proof is based on the fact that
			$$
			(\langle g,u\rangle, \langle g,v\rangle) = \sum_{i=1}^m g_i(u_i,v_i) \sim \mathcal{N}\Big(0,\begin{pmatrix}
				1 & \langle u,v\rangle \\
				\langle u,v\rangle & 1
			\end{pmatrix}\Big)
			$$
			holds for any $u,v\in\Sm$. It is well known that for any joint Gaussian distribution also a conditional Gaussian distribution between the components holds. In this case,
			$$
			\langle g,u\rangle \big| \langle g,v \rangle \sim \mathcal{N}(\langle u,v\rangle\langle g,v \rangle, 1-\langle u,v\rangle^2).
			$$
			We combine this with the law of total probability and the fact that 
			$$
			\E[|g_1|^q] = \frac{2^{q/2}\Gamma(\frac{q+1}{2})}{\sqrt{\pi}}
			$$
			in each case (see Lemma \ref{lem: asym expectation}). For the first one just choose $v$ as the vector that has only a one at the $i$-th entry. This leads to
			\begin{align*}
				\E[|\langle g,u\rangle|^{q-2} \langle g,u\rangle g_i] = \E[|\langle g,u\rangle|^{q-2} \langle g,u\rangle\E[g_i|\langle g,u\rangle]] = u_i  \E[|\langle g,u\rangle|^{q}] = u_i \frac{2^{q/2}\Gamma(\frac{q+1}{2})}{\sqrt{\pi}},
			\end{align*}
			which is what we wanted to show. For the second one,
			\begin{align*}
				\E[|\langle g,u\rangle|^q g_i^2] = \E[|\langle g,u\rangle|^{q}\E[g_i^2| \langle g,u\rangle]],
			\end{align*}
			and again using the conditional expectation 
			$$
			\E[g_i^2| \langle g,u\rangle] = \Var(g_i| \langle g,u\rangle) + \E[g_i| \langle g,u\rangle]^2 = 1-u_i^2+ u_i^2\langle g,u\rangle^2,
			$$
			which leaves us with
			\begin{align*}
				\E[|\langle g,u\rangle|^{q}\E[g_i^2| \langle g,u\rangle]] &= (1-u_i^2)\E[|\langle g,u\rangle|^q] + u_i\E[|\langle g,u\rangle|^{q+2}]\\
				& = (1-u_i^2)\frac{2^{q/2}\Gamma(\frac{q+1}{2})}{\sqrt{\pi}} + u_i\frac{2^{q/2+1}\Gamma(\frac{q+3}{2})}{\sqrt{\pi}} .
			\end{align*}
			This simplifies to the desired expression using the property $\Gamma(x+1) = x\Gamma(x)$ for any $x>0$. \\
			We continue similarly for the third one. This time we will need that similar as above
			$$
			(g_i,g_j, \langle g,u \rangle) \sim \mathcal{N}\Bigg(0,\begin{pmatrix}
				1 & 0 & u_i \\
				0 & 1 & u_j &\\
				u_i & u_j & 1
			\end{pmatrix}\Bigg),
			$$
			which gives  
			$$
			(g_i,g_j)\big| \langle g,u\rangle \sim \mathcal{N}\Bigg(\langle g,u\rangle (u_i,u_j), \begin{pmatrix}
				1 - u_i^2 & -u_iu_j\\
				-u_iu_j & 1-u_j^2
			\end{pmatrix}\Bigg).
			$$
			Now continue with 
			\begin{align*}
				\E[|\langle g,u\rangle|^qg_ig_j] = \E[|\langle g,u\rangle|^q\E[g_ig_j| \langle g,u\rangle]]
			\end{align*}
			and using the conditional distribution from above
			\begin{align*}
				\E[g_ig_j| \langle g,u\rangle] &= \Cov[g_i,g_j| \langle g,u\rangle] + \E[g_i|\langle g,u\rangle]\E[g_j|\langle g,u\rangle]\\
				&= -u_iu_j + u_i\langle g,u\rangle u_j\langle g,u\rangle = u_iu_j(\langle g,u\rangle^2-1),
			\end{align*}
			which gives the final result
			\begin{align*}
				\E[|\langle g,u\rangle|^q\E[g_ig_j| \langle g,u\rangle]] &= u_iu_j \E[|\langle g,u\rangle|^q(\langle g,u\rangle^2-1))] = u_iu_j(\E[|g_1|^{q+2}] - \E[|g_1|^q])\\
				& = \frac{u_iu_j}{\sqrt{\pi}}(2^{q/2+1}\Gamma(\frac{q+3}{2})- 2^{q/2}\Gamma(\frac{q+1}{2}))= u_iu_j\frac{q2^{q/2}\Gamma(\frac{q+1}{2})}{\sqrt{\pi}},
			\end{align*}
			using again $\Gamma(x+1) = x\Gamma(x)$ in the last step. We are left with the fourth expectation. The case $u=v$ follows simply from the rotational invariance:
			$$
			\E[|\langle g,u\rangle|^{2q}] = \E[|g_1|^{2q}] = \frac{2^q \Gamma(\frac{1}{2}+q)}{\sqrt{\pi}}.
			$$
			The case $u\not= v$ is a bit more involved. We start as in the other cases with 
			$$
			\E[|\langle g,u\rangle\langle g,v\rangle|^q] = \E[|\langle g,v\rangle|^q \E[|\langle g,u\rangle|^q |\langle g,v\rangle]].
			$$
			Calculating the inner expectation $\E[|\langle g,u\rangle|^q |\langle g,v\rangle]$ involves the $q$-th absolute moment of a non-centered Gaussian random variable (with mean $\langle u,v\rangle\langle v,g\rangle$ and variance $1-\langle u,v\rangle^2$). By \cite{winkelbauer2012moments}, we have 
			$$
			\E\Big[|\langle g,u\rangle|^q \Big|\langle g,v\rangle\Big] = (1-\langle u,v\rangle^2)^{q/2}2^{q/2}\frac{\Gamma(\frac{q+1}{2})}{\sqrt{\pi}} {}_1F_1\Big(-q/2, 1/2,-\frac{\langle u,v\rangle^2 \langle g,v\rangle^2}{2(1-\langle u,v\rangle^2)}\Big),
			$$
			where ${}_1F_1$ denotes Kummer's hypergeometric function (see \cite[Chapter 15]{lozier2003nist}). By definition of this function we have
			\begin{align*}
				{}_1F_1\Big(-q/2, 1/2,-\frac{\langle u,v\rangle^2 \langle g,v\rangle^2}{2(1-\langle u,v\rangle^2)}\Big) = \sum_{n=0}^\infty \frac{\Gamma(-q/2+n)}{\Gamma(-q/2)}\frac{\sqrt{\pi}}{\Gamma(n+1/2)}\frac{(-1)^n}{n!}\big(\frac{\langle u,v\rangle^2}{2(1-\langle u,v\rangle^2)} \big)^n \langle g,v\rangle^{2n}.
			\end{align*}
			This implies
			\begin{align*}
				\E[|\langle g,u\rangle|^q |\langle g,v\rangle] =  (1-\langle u,v\rangle^2)^{q/2}2^{q/2}\frac{\Gamma(\frac{q+1}{2})}{\Gamma(-q/2)}\sum_{n=0}^\infty \frac{\Gamma(-q/2+n)}{\Gamma(n+1/2)}\frac{(-1)^n}{n!}\Big(\frac{\langle u,v\rangle^2}{2(1-\langle u,v\rangle^2)} \Big)^n \E[|g_1|^{q+2n}]
			\end{align*}
			since this sum converges absolutely. Now we can use the central moment to conclude that the above is the same as 
			$$
			(1-\langle u,v\rangle^2)^{q/2}2^{q/2}\frac{\Gamma(\frac{q+1}{2})}{\sqrt{\pi}\Gamma(-q/2)}\sum_{n=0}^\infty \frac{\Gamma(-q/2+n)}{\Gamma(n+1/2)n!}\Big(-\frac{\langle u,v\rangle^2}{2(1-\langle u,v\rangle^2)} \Big)^n 2^{n+q/2}\Gamma\big(\frac{q+2n+1}{2}\big).
			$$
			By writing out the definition of the Gauss hypergeometric function we see
			\begin{align*}
				&\sum_{n=0}^\infty \frac{\Gamma(-q/2+n)}{\Gamma(n+1/2)n!}\big(-\frac{\langle u,v\rangle^2}{2(1-\langle u,v\rangle^2)} \big)^n 2^{n+q/2}\Gamma\big(\frac{q+2n+1}{2}\big)\\
				&= \frac{2^{q/2}\Gamma(-q/2)\Gamma(\frac{q+1}{2})}{\sqrt{\pi}}{}_2F_1\Big(-q/2,\frac{q+1}{2},1/2,-\frac{\langle u,v\rangle^2}{1-\langle u,v\rangle^2}\Big),
			\end{align*}
			which yields 
			$$
			\E[|\langle g,u\rangle\langle g,v\rangle|^q] = \frac{(1-\langle u,v\rangle^2)^{q/2}2^q\Gamma(\frac{q+1}{2})^2 {}_2 F_1(-q/2, \frac{q+1}{2},\frac{1}{2},-\frac{\langle u,v\rangle^2}{1-\langle u,v\rangle^2})}{\pi}.
			$$
			After a Pfaff transformation (see \cite[Equation 15.8.1]{lozier2003nist}), we end up with
			$$
			\frac{(1-\langle u,v\rangle^2)^{q/2}2^q\Gamma(\frac{q+1}{2})^2 {}_2 F_1(-q/2, \frac{q+1}{2},\frac{1}{2},-\frac{\langle u,v\rangle^2}{1-\langle u,v\rangle^2})}{\pi} = \frac{2^q\Gamma(\frac{q+1}{2})^2}{\pi} {}_2 F_1(-q/2,-q/2, 1/2, \langle u,v\rangle^2),
			$$
			as desired.
		\end{proof}
		
		The expectation in the next lemma will be needed later in the calculation of the variance for the volume. 
		
		\begin{lemma}\label{lem: expectation gauss part with int}
			Let $p,q \ge 1$, $m\in\mathbb{N}$, and $g= (g_1,\ldots,g_m) \sim \mathcal{N}(0,\operatorname{Id}_m)$. Then, 
			$$
			\E\Big[ \int_{\mathbb{S}^{m-1}}\int_{\mathbb{S}^{m-1}}|\langle g,u\rangle|^p|\langle g,v\rangle|^q\,\sigma(\dint u)\sigma(\dint v)\Big]=\frac{2^{\frac{p+q}{2}+1}\Gamma(\frac{m+p + q}{2})\Gamma(\frac{1+p}{2})\Gamma(\frac{1+q}{2})\Gamma(1+\frac{m}{2})}{m\pi \Gamma(\frac{m+p}{2})\Gamma(\frac{m+q}{2})}.
			$$
		\end{lemma}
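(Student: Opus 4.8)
The plan is to collapse the double spherical integral into a product of one–dimensional moments by exploiting the rotational invariance of $\sigma$, after which the statement reduces to a bookkeeping computation with $\Gamma$-functions.

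First I would observe that for any fixed nonzero $w\in\R^m$ and any exponent $r>0$, invariance of $\sigma$ under the orthogonal group gives
\[
  \int_{\mathbb{S}^{m-1}} |\langle w, u\rangle|^r \,\sigma(\dint u) \;=\; \|w\|^r \int_{\mathbb{S}^{m-1}} \big|\big\langle w/\|w\|, u\big\rangle\big|^r\,\sigma(\dint u) \;=\; \|w\|^r\, c_r,\qquad c_r:=\int_{\mathbb{S}^{m-1}}|u_1|^r\,\sigma(\dint u),
\]
where the middle integral is independent of the direction $w/\|w\|$: choosing an orthogonal map $Q$ with $Q(w/\|w\|)=e_1$ and substituting $u\mapsto Q^{-1}u$ turns it into $\int_{\mathbb{S}^{m-1}}|u_1|^r\,\sigma(\dint u)$. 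Applying this pointwise with $w=g$ (which is nonzero almost surely), first with $r=p$ in the $u$-integral and then with $r=q$ in the $v$-integral, and using that $|\langle g,u\rangle|^p$ depends only on $u$ while $|\langle g,v\rangle|^q$ depends only on $v$, the inner double integral factorizes for each realization of $g$:
\[
  \int_{\mathbb{S}^{m-1}}\int_{\mathbb{S}^{m-1}}|\langle g,u\rangle|^p|\langle g,v\rangle|^q\,\sigma(\dint u)\sigma(\dint v) \;=\; \|g\|^{p+q}\, c_p\, c_q .
\]
Taking expectations — Tonelli's theorem justifies interchanging $\E$ with the spherical integrals since the integrand is nonnegative, and $c_p,c_q$ are deterministic constants — leaves $\E\big[\|g\|^{p+q}\big]\,c_p\,c_q$.

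It then remains to evaluate the three factors. Since $\|g\|^2\sim\chi^2_m$, the standard $\Gamma$-moment formula gives $\E\big[\|g\|^{p+q}\big] = 2^{(p+q)/2}\,\Gamma(\frac{m+p+q}{2})\big/\Gamma(\frac m2)$. For $c_r$, the square of the first coordinate of a uniform point on $\mathbb{S}^{m-1}$ is $\mathrm{Beta}(\frac12,\frac{m-1}{2})$-distributed, so — exactly as in the computation carried out in Lemma~\ref{lem: asym expectation} (there with $N$ in place of $m$) — one obtains $c_r = \Gamma(\frac{r+1}{2})\Gamma(\frac m2)\big/\big(\sqrt{\pi}\,\Gamma(\frac{m+r}{2})\big)$. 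Multiplying the three expressions yields
\[
  \E\big[\|g\|^{p+q}\big]\,c_p\,c_q \;=\; \frac{2^{(p+q)/2}\,\Gamma(\frac{m+p+q}{2})\,\Gamma(\frac{p+1}{2})\,\Gamma(\frac{q+1}{2})\,\Gamma(\frac m2)}{\pi\,\Gamma(\frac{m+p}{2})\,\Gamma(\frac{m+q}{2})},
\]
and replacing $\Gamma(\frac m2) = \frac 2m\,\Gamma(1+\frac m2)$ produces precisely the claimed formula.

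There is essentially no genuine obstacle here: the argument is a short chain consisting of a Tonelli interchange, the rotational–invariance factorization, and $\Gamma$-function bookkeeping. The only points that need a little care are making the reduction to the one–dimensional moments cleanly (the substitution $u\mapsto Q^{-1}u$ inside the spherical integral, together with the observation that the double integral splits because the two factors depend on disjoint variables) and the final elementary identity $\Gamma(1+\frac m2)=\frac m2\,\Gamma(\frac m2)$ reconciling the two equivalent forms of the normalizing constant.
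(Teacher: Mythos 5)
Your proof is correct and follows essentially the same route as the paper: factor the inner double integral as $\|g\|^{p+q}c_p c_q$ by rotational invariance of $\sigma$, then evaluate the three pieces by $\Gamma$-function computations. The only (inessential) difference is in how you evaluate the pieces — you compute $c_r$ from the $\mathrm{Beta}(\tfrac12,\tfrac{m-1}{2})$-law of $U_1^2$ (mirroring Lemma \ref{lem: asym expectation}) and $\E[\|g\|^{p+q}]$ from the $\chi^2_m$ moment formula, whereas the paper cites an external isotropy lemma of Alonso-Gutiérrez for $c_r$ and uses a polar-coordinates integral for $\E[\|g\|^{p+q}]$; the two sets of formulas agree after the substitution $\Gamma(1+\tfrac m2)=\tfrac m2\Gamma(\tfrac m2)$, which you also note.
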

		\begin{proof} By changing to spherical coordinates, we get
			\begin{align*}
				\E\big[\|g\|^p\big] &= (2\pi)^{-\frac{m}{2}}\int_{\R^m} \|x\|^p e^{-\frac{\|x\|^2}{2}} \,\dint x = m\kappa_m (2\pi)^{-\frac{m}{2}} \int_0^\infty r^{m+p-1} e^{-\frac{r^2}{2}} \,\dint r\\
				&= \frac{m\kappa_m 2^{\frac{p}{2}-1}}{\pi^{\frac{m}{2}}}  \Gamma\Big(\frac{m+p}{2}\Big) = \frac{m2^{\frac{p}{2}-1}\Gamma(\frac{m+p}{2})}{\Gamma(1+\frac{m}{2})}.
			\end{align*}
			We use \cite[Lemma 2.1]{alonso2008isotropy} and the result above to see that 
			\begin{align*}
				&\E\Big[\int_{\mathbb{S}^{m-1}}\int_{\mathbb{S}^{m-1}}|\langle g,u\rangle|^p|\langle g,v\rangle|^q\,\sigma(\dint u)\sigma(\dint v)\Big]\\ &= \E\Big[\|g\|_2^{p+q}\Big(\int_{\mathbb{S}^{m-1}} \Big|\Big\langle \frac{g}{\|g\|_2},u\Big\rangle\Big|^p\,\sigma(\dint u)\Big)\Big(\int_{\mathbb{S}^{m-1}} \Big|\Big\langle \frac{g}{\|g\|_2},u\Big\rangle\Big|^q\,\sigma(\dint u)\Big)\Big]\\ 
				&=\frac{2\Gamma(\frac{1+p}{2})\Gamma(1+\frac{m}{2})}{\sqrt{\pi}m\Gamma(\frac{m+p}{2})}\frac{2\Gamma(\frac{1+q}{2})\Gamma(1+\frac{m}{2})}{\sqrt{\pi}m\Gamma(\frac{m+q}{2})}\E\big[\|g\|_2^{p+q}\big]\\
				&=  \frac{2^{\frac{p+q}{2}+1}\Gamma(\frac{m+p + q}{2})\Gamma(\frac{1+p}{2})\Gamma(\frac{1+q}{2})\Gamma(1+\frac{m}{2})}{m\pi \Gamma(\frac{m+p}{2})\Gamma(\frac{m+q}{2})},
			\end{align*}
			which completes the proof.
		\end{proof}

		The following is an elementary Lipschitz estimate that we will need in order to apply Proposition \ref{prop: functional CLT}. 
		
		\begin{lemma}\label{lem: lipschitz condition}
			Assume $N\in \N$,  $x_1,x_2,\ldots,x_N\in \R^m$, $q\ge 1$ and $s_N\ge 0$ for all $N\in\N$. Further, let $K\subseteq \R^m$ be compact and $\sup_{u\in K}\|u\|_2 \le C$ for some $C\in(0,\infty)$. Then, we have 
			$$
			\Big| \frac{1}{N}\sum_{i=1}^N |\langle x_i,u\rangle|^q - \frac{1}{N}\sum_{i=1}^N |\langle x_i,v\rangle|^q \Big| \le \Big(\frac{1}{N}\sum_{i=1}^N qC^{q-1}\|x_i\|_2^{q-1}\Big) \|u-v\|_2
			$$ 
			all $u, v \in K$.
		\end{lemma}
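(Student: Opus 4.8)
The plan is to observe that this is an entirely elementary estimate reducing to a Lipschitz bound for the power function on a bounded interval, and that there is no genuine obstacle to overcome. First I would apply the triangle inequality to pass from the averages to the individual summands, namely
\[
  \Bigl| \frac1N\sum_{i=1}^N |\langle x_i,u\rangle|^q - \frac1N\sum_{i=1}^N |\langle x_i,v\rangle|^q \Bigr|
  \;\le\; \frac1N\sum_{i=1}^N \bigl| \, |\langle x_i,u\rangle|^q - |\langle x_i,v\rangle|^q \, \bigr| ,
\]
so that it suffices to bound $\bigl|\,|a|^q-|b|^q\,\bigr|$ for the real numbers $a=\langle x_i,u\rangle$ and $b=\langle x_i,v\rangle$, and then to sum over $i$ and divide by $N$.

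Second I would record and use the elementary inequality
\[
  \bigl|\,|a|^q-|b|^q\,\bigr| \;\le\; q\,\max\{|a|,|b|\}^{q-1}\,|a-b| \qquad (q\ge 1,\ a,b\in\R),
\]
which follows from the mean value theorem applied to the continuously differentiable map $t\mapsto |t|^q$, whose derivative $q|t|^{q-1}\sgn(t)$ has modulus at most $q\max\{|a|,|b|\}^{q-1}$ on the segment joining $a$ and $b$ because $t\mapsto|t|^{q-1}$ is nondecreasing in $|t|$ (equivalently, one may invoke the convexity of $t\mapsto|t|^q$). Applying this with $a=\langle x_i,u\rangle$ and $b=\langle x_i,v\rangle$, and then using the Cauchy--Schwarz inequality together with the hypotheses $u,v\in K$ and $\sup_{w\in K}\|w\|_2\le C$ to estimate $\max\{|\langle x_i,u\rangle|,|\langle x_i,v\rangle|\}\le C\|x_i\|_2$ and $|a-b|=|\langle x_i,u-v\rangle|\le \|x_i\|_2\,\|u-v\|_2$, one obtains the per-summand bound, and summing over $i=1,\dots,N$ and dividing by $N$ yields the asserted estimate. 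The main point — such as it is — is simply the choice of the power-function inequality above; everything else is bookkeeping with Cauchy--Schwarz.
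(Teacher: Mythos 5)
Your argument is correct and takes essentially the same route as the paper's: both rest on the same Lipschitz bound for $t\mapsto|t|^q$ on a bounded interval, composed with the $1$-Lipschitz-up-to-$\|x_i\|_2$ map $u\mapsto|\langle x_i,u\rangle|$ (the paper phrases this as composition of Lipschitz maps; you phrase it via the mean value theorem applied to the power function — these are the same fact).

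One issue in the write-up, however. Carry your per-summand chain out to the end: the mean-value estimate gives the factor $q\max\{|a|,|b|\}^{q-1}\le q(C\|x_i\|_2)^{q-1}$ and Cauchy--Schwarz gives the separate factor $|a-b|\le\|x_i\|_2\|u-v\|_2$, for a total of
\[
\bigl|\,|\langle x_i,u\rangle|^q-|\langle x_i,v\rangle|^q\,\bigr|\;\le\; q\,C^{q-1}\,\|x_i\|_2^{\,q}\,\|u-v\|_2,
\]
with exponent $q$ on $\|x_i\|_2$, not $q-1$. You then write ``yields the asserted estimate,'' but the lemma statement has $\|x_i\|_2^{\,q-1}$. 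The statement as printed is in fact incorrect: a one-dimensional check with $C=1$ and $|x_1|$ large (and $u\neq v$ with $|u|\neq|v|$) shows the $q-1$ version fails. The paper's own proof ends with the $\|x_i\|_2^{\,q}$ bound (so the statement contains a typo that the paper's proof does not support either), and this is also what is used later, e.g.\ in the proof of Theorem \ref{thm: Functional LDP} the Lipschitz constant appears as $q\|\sqrt{N}v_i\|_2^q$. So your derivation is right; you should simply not claim it matches the (mis-stated) lemma, and instead note that the correct exponent is $q$.
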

		\begin{proof}
			Clearly, the composition of Lipschitz functions is again Lipschitz with the Lipschitz constant for the composition being the product of the respective Lipschitz constants of the functions. Since $u\mapsto |\langle x, u\rangle|$ has Lipschitz constant $1$, $x\mapsto x^q$ has Lipschitz constant $qa^{q-1}$ on the interval $[0,a]$, and $|\langle x,u\rangle| \le C \|x\|_2$, we find that 
			$$
			\Big| |\langle x_i,u\rangle|^q - |\langle x_i,v\rangle|^q \Big| \le qC^{q-1}\|x\|_2^{q-1} \|u-v\|_2.
			$$ 
			Consequently,
			$$
			\Big| \frac{1}{N}\sum_{i=1}^N |\langle x_i,u\rangle|^q - \frac{1}{N}\sum_{i=1}^N |\langle x_i,v\rangle|^q \Big| \le \Big(\frac{1}{N}\sum_{i=1}^N qC^{q-1}\|x_i\|_2^{q}\Big) \|u-v\|_2,
			$$
			as desired.
		\end{proof}
		
		%%%%%%%%%%%%%%%%%%%%%%%%%%%%%%%%%%%%%%%%%%%%%%%%%%%%%%%%%%%%
		\subsection{Proof of the central limit theorem and the moderate deviation principle}\label{sec: CLT and MDP proof}
		%%%%%%%%%%%%%%%%%%%%%%%%%%%%%%%%%%%%%%%%%%%%%%%%%%%%%%%%%%%%
		
		In this section we prove the functional results Theorem \ref{thm: Functional CLT} and Theorem \ref{thm: Functional MDP}. We now provide an outline for a general method of proof for Theorem \ref{thm: Functional CLT}. The sequence in the theorem is a sum of random functions depending on columns of a random element in the Stiefel manifold. Every column has the same distribution as $(G_NG_N^*)^{-\frac{1}{2}}g_i$, where $G_N\in\R^{m\times N}$ is a standard Gaussian matrix with i.d.d.\, columns $g_1,\ldots,g_N \sim \mathcal{N}(0,\operatorname{Id}_m)$. Using a Taylor expansion, we can separate $(G_NG_N^*)^{-\frac{1}{2}}$ from $g_i$ and reduce this problem to sums over i.d.d.~random functions to which we can apply known limit theorems. These can be transformed back to the original sequence which proves the desired statement. The proof of the MDP follows similar steps using the corresponding theorem in the moderate deviations regime. \\
		
		We start with a central limit theorem for case of i.i.d. random function that will be used later.
		
		\begin{lemma}\label{lem: CLT for gaussian sup version of ellp}
			Let $m\in\N$, $q\ge 0$, $K\subseteq \R^m$ be compact and $g_1,g_2,\ldots$ be i.i.d.~random vectors in $\R^m$ with $g_1\sim\mathcal{N}(0,\operatorname{Id}_m)$. Then the sequence of random functions $\Phi_{N} : K \to \R$ given by
			$$
			\Phi_{N}(u) = \frac{1}{\sqrt{N}}\sum_{i=1}^N (|\langle g_i, u\rangle|^q - \E[|\langle g_i, u\rangle|^q]),
			$$
			converges in distribution to a centered Gaussian process $Z_1$ indexed by $K$ with 
			$$
			\E[ Z_1(u_1)Z_1(u_2)] = \E[(|\langle g_i,u_1\rangle|^{q} - \E[|\langle g_i,u_1\rangle|^q])(|\langle g_i,u_2\rangle|^{q} - \E[|\langle g_i,u_2\rangle|^{q}])]
			$$
			for all $u_1,u_2\in K$.
		\end{lemma}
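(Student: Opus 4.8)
The plan is to deduce the statement directly from the functional central limit theorem recorded as Proposition~\ref{prop: functional CLT}, applied to the compact index space $K$ and the centered real-valued process
\[
  X(u) := |\langle g_1, u\rangle|^q - \E\bigl[|\langle g_1,u\rangle|^q\bigr],\qquad u\in K,
\]
so that $\Phi_N = \tfrac{1}{\sqrt N}\sum_{i=1}^N X_i$ is precisely the normalized partial sum of i.i.d.\ copies $X_i(u)=|\langle g_i,u\rangle|^q-\E[|\langle g_i,u\rangle|^q]$ of $X$. The case $q=0$ is trivial since then $X\equiv 0$, so one assumes $q>0$. First I would record the basic regularity: since $g_1$ is Gaussian it has moments of all orders, hence $\E X(u)^2<\infty$ for every $u\in K$; moreover, using $C:=\sup_{u\in K}\|u\|_2<\infty$ together with dominated convergence, both $u\mapsto|\langle g_1,u\rangle|^q$ and $u\mapsto\E[|\langle g_1,u\rangle|^q]$ are continuous on $K$, so that $\Phi_N\in C(K)$ and $X$ has continuous sample paths. (The value $\E[|g_1|^q]=2^{q/2}\Gamma(\tfrac{q+1}{2})/\sqrt\pi$ appearing in the centering is the one computed in Lemma~\ref{lem: asym expectation}.)

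The substance of the argument is then the verification of the two quantitative hypotheses of Proposition~\ref{prop: functional CLT}: the square-integrable random Lipschitz bound and the finiteness of the metric-entropy integral. For $q\ge 1$ the Lipschitz bound is immediate from Lemma~\ref{lem: lipschitz condition} applied with $N=1$ and $x_1=g_1$, which gives $|X(u)-X(v)|\le qC^{q-1}\|g_1\|_2^{q}\,\|u-v\|_2$ with respect to the Euclidean metric, and the constant $L:=qC^{q-1}\|g_1\|_2^{q}$ is square integrable because $\E\|g_1\|_2^{2q}<\infty$. For $0<q<1$ the map $t\mapsto t^q$ is only Hölder, so here I would instead equip $K$ with the snowflake metric $d(u,v):=\|u-v\|_2^{q}$, which is a genuine metric inducing the Euclidean topology (hence $K$ stays compact and $C(K)$ is unchanged), and use $\bigl||a|^q-|b|^q\bigr|\le|a-b|^q$ to obtain $|X(u)-X(v)|\le\|g_1\|_2^{q}\,d(u,v)$ with square-integrable $L:=\|g_1\|_2^{q}$. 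In either regime, boundedness of $K\subseteq\R^m$ yields a polynomial covering-number bound $\operatorname{Covering}(K,d,\eps)\le c\,\eps^{-m/\min(q,1)}$, so that $\int_0^1\log\operatorname{Covering}(K,d,\eps)\,\dint\eps<\infty$.

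With these checks in place, Proposition~\ref{prop: functional CLT} delivers a centered Gaussian process $Z_1$ on $C(K)$ with covariance $\E[Z_1(u_1)Z_1(u_2)]=\E[X(u_1)X(u_2)]$ for all $u_1,u_2\in K$ — which is exactly the asserted expression — together with the convergence $\Phi_N\xrightarrow{d}Z_1$ on $(C(K),\|\,\cdot\,\|_\infty)$, completing the proof. I do not expect a genuine obstacle here: the whole lemma is a verification of hypotheses, and the only point requiring a little care is the entropy/Lipschitz estimate in the range $q<1$, which is precisely what the passage to the snowflake metric is designed to handle; everything else reduces to the finiteness of Gaussian moments and the boundedness of $K$.
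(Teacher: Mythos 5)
Your proof is correct and follows the same route as the paper: both reduce the statement to a verification of the hypotheses of Proposition~\ref{prop: functional CLT} (the Gin\'e functional CLT), using the Lipschitz estimate of Lemma~\ref{lem: lipschitz condition} with $N=1$ and a polynomial covering-number bound for bounded sets in $\R^m$. In fact your write-up is more complete than the paper's: the lemma is stated for all $q\ge 0$, but Lemma~\ref{lem: lipschitz condition} assumes $q\ge 1$, so the paper's proof silently leaves the range $0<q<1$ uncovered; your snowflake-metric argument $d(u,v)=\|u-v\|_2^q$ together with the subadditivity inequality $\bigl||a|^q-|b|^q\bigr|\le|a-b|^q$ fills exactly that gap without changing the topology on $K$ or spoiling the entropy integral. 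One small point that both you and the paper gloss over: Lemma~\ref{lem: lipschitz condition} controls the uncentered $|\langle g_1,u\rangle|^q$, so to get the Lipschitz condition for the centered process $X(u)=|\langle g_1,u\rangle|^q-\E[|\langle g_1,u\rangle|^q]$ one should add the (deterministic) Lipschitz constant of $u\mapsto\E[|\langle g_1,u\rangle|^q]$, e.g.\ take $L=qC^{q-1}\bigl(\|g_1\|_2^{q}+\E\|g_1\|_2^{q}\bigr)$ for $q\ge1$, which remains square-integrable; this is a harmless omission.
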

		\begin{proof} We check the assumptions of Proposition \ref{prop: functional CLT}. We find a constant $c>0$ such that $K\subseteq c\mathbb{B}_2^{m}$. The summands are centered and 
			$$
			\V[|\langle g_i,u\rangle|^{q}] = \E\Big[(|\langle g_i,u\rangle|^{q} - \E[|\langle g_i,u\rangle|^{q}])^2\Big]< \infty.
			$$ 
			Lemma \ref{lem: lipschitz condition} for $N=1$ gives the Lipschitz condition. Finally, the covering number with respect to a radius $\varepsilon>0$, denoted by $\operatorname{Covering}(\mathbb{B}_2^m, \|\,\cdot\,\|,\epsilon)$, is upper bounded by $(\frac{2}{\epsilon}+1)^{m}$ for any $k$ according to \cite[Corollary 4.2.13]{vershynin2018high}. Thus,
			$$
			\int_{0}^1 \log\operatorname{Covering}(c\mathbb{B}_2^m,\|\,\cdot\,\|,\epsilon)\,\dint \epsilon = m\int_0^1\log\Big(1+{2c\over \varepsilon}\Big)\,\dint\varepsilon <\infty.
			$$
			We can now apply Proposition \ref{prop: functional CLT} to conclude that the sequence of random processes $(\Phi_{N})_{N\in\N}$ converges, as $N\to\infty$, in distribution with respect to the topology of uniform convergence to a centered Gaussian process $Z_1(u)$, $u\in K$ with 
			$$
			\E[ Z_1(u_1)Z_1(u_2)] = \E[(|\langle g_i,u_1\rangle|^{q} - \E[|\langle g_i,u_1\rangle|^q])(|\langle g_i,u_2\rangle|^{q} - \E[|\langle g_i,u_2\rangle|^{q}])]
			$$
			for all $u_1,u_2$. 
		\end{proof}
		
		\noindent For $0<\delta<1$, define 
		\begin{align}\label{eq: K(m,delta) defi}
			\mathbb{K}(m,\delta) := \Big\{M\in\R^{m\times m}: 1-\delta \le \lambda_i \le 1+ \delta \text{ for all eigenvalues } \lambda_i \text{ of } M, M \text{ symmetric }\Big\}.
		\end{align}
		Since $M$ is symmetric the eigenvalues are real. Let $M\in \mathbb{K}(m,\delta)$. Then, $\det(M) = \prod_{i=1}^m \lambda_i>0$, $M$ is positive definite and any norm of $M$ is bounded. Further, since eigenvalues are continuous in the entries of the matrix, the set is closed and hence, $\mathbb{K}(m,\delta)$ is compact. We define 
		$$
		\mathbb{K}(m,\delta)\ni G_{N,\delta} := \begin{cases}
			G_N & : \quad \frac{1}{N}G_NG_N^* \in \mathbb{K}(m,\delta)\\
			\operatorname{Id}_m &: \quad \text{ otherwise } .
		\end{cases}
		$$
		
		\begin{proof}[Proof of Theorem \ref{thm: Functional CLT}] We want to find the weak limit of the sequence of processes
			\begin{align}
				\frac{1}{\sqrt{N}}\sum_{i=1}^N (|\langle \sqrt{N}v_i, \,\cdot\,\rangle|^q - \E[|\langle \sqrt{N}v_i, \,\cdot\,\rangle|^q])
			\end{align}
			in the uniform norm indexed by $\Sm$. By Lemma \ref{lem: asym expectation}, $v_i \overset{d}{=} (G_NG_N^*)^{-\frac{1}{2}}g_i$ and Slutsky's theorem, this sequence satisfies the same weak convergence as
			$$
			\frac{1}{\sqrt{N}}\sum_{i=1}^N \Big(|\langle (\frac{1}{N}G_NG_N^*)^{-\frac{1}{2}}g_i, \,\cdot\,\rangle|^q - \E[|g|^q]\Big),
			$$
			where $g\sim\mathcal{N}(0,1)$, $g_1,\ldots,g_N \sim \mathcal{N}(0,\operatorname{Id}_m)$ are all independent. It is therefore sufficient to establish the central limit theorem for the latter sequence. Let 
			$$
			S^m_+ := \Big\{M\in\R^{m\times m}\,:\, M \text{ is positive definite and symmetric} \Big\}.
			$$
			It is well known that the set $S^m_+$ is open in the set of symmetric matrices. Further, with the notation from above we have
			\begin{align*}
				&\frac{1}{\sqrt{N}}\sum_{i=1}^N (|\langle \sqrt{N}v_i, u\rangle|^q - \E[|g|^q])\overset{d}{=} \frac{1}{\sqrt{N}}\sum_{i=1}^N \Big( \Phi\Big(\Big(\frac{1}{N}G_NG_N^*\Big)^{-\frac{1}{2}}, u, g_i\Big)  - \E[|g|^q]\Big),
			\end{align*}
			where $\Phi: S^m_+\times \Sm \times \R^m \to \R$ is defined by 
			$$
			\Phi(M, u, g_i) := |\langle Mg_i, u\rangle|^q.
			$$
			To shorten notation, let $F_N = (\frac{1}{N}G_NG_N^*)^{-\frac{1}{2}}$. We show some properties about $\Phi$ and $F_N$ that we will need later on. A $m\times m$ matrix $M\in S^m_+$ has entries $(m_{i,j})_{i,j=1}^m$ and $m_{i,j} = m_{j,i}$ due to symmetry. For $q>1$, $\Phi$ is continuously differentiable in all points (since $x\mapsto |x|^q$ has continuous derivative $q|x|^{q-2}x$ in $x\not= 0$) with 
			\begin{align*}
				\frac{\dint}{\dint m_{i,j}}\Phi(M,u,g) = \begin{cases}
					q|\langle Mg,u \rangle |^{q-2}\langle Mg,u \rangle (g_j u_i+g_i u_j) &: \quad i \not= j\\
					qu_i|\langle Mg,u \rangle |^{q-2}\langle Mg,u \rangle g_i &: \quad i= j,
				\end{cases} 
			\end{align*}
			due to the symmetry of $M$. We remark that in the case $q=1$ the function is almost everywhere continuously differentiable in $M$. We will need 
			\begin{align*}
				e_{i,j}(u) &:= \E[\frac{\dint}{\dint m_{i,j}}\Phi(\operatorname{Id}_m,u,g)]\\
				&= \begin{cases}
					q u_i \E[|\langle g,u\rangle|^{q-2}\langle g,u\rangle g_j] + q u_j \E[|\langle g,u\rangle|^{q-2}\langle g,u\rangle g_i] &: \quad i \not= j\\
					qu_i\E[|\langle Mg,u \rangle |^{q-2}\langle Mg,u \rangle g_i] &:\quad i= j.
				\end{cases} 
			\end{align*}
			Further, we know by the strong law of large numbers and the continuous mapping theorem that $F_N \xrightarrow{a.s.} \operatorname{Id}_m$. By the multivariate central limit theorem (\cite[Theorem 11.10]{breiman1992probability}), we know 
			$$
			\sqrt{N}\Big(\frac{1}{N}G_NG_N^* - \operatorname{Id}_m\Big) \xrightarrow{d} Z\sim \mathcal{N}(0,\Sigma),
			$$
			where $\Sigma$ is a $m^2 \times m^2$ matrix with entries $\Sigma_{(i,j),(k,\ell)}$ given by the covariance of $g_{1,k}g_{1,\ell}$ and $g_{1,i}g_{1,j}$, i.e.
			\begin{align*}
				\E[g_{1,k}g_{1,\ell}g_{1,i}g_{1,j}]-\E[g_{1,k}g_{1,\ell}]\E[g_{1,i}g_{1,j}] = \begin{cases}
					\E[g_{1,k}^4]-\E[g_{1,k}^2]^2 = 2 &:\quad i=j=k=\ell \\
					\E[g_{1,k}^2]\E[g_{1,\ell}^2]=1   &: \quad i=k\not= j=\ell \text{ or } j=k\not= i=\ell\\
					0 &: \quad \text{otherwise. }
				\end{cases}
			\end{align*}
			Since $\Psi(M) := M^{-\frac{1}{2}}$ is a differentiable map in $\operatorname{Id}_m$ on the space of symmetric matrices, the delta method (Proposition \ref{prop: delta method infinite dim}) implies that 
			$$
			\sqrt{N}(F_N - \operatorname{Id}_m) \xrightarrow{d} -\frac{1}{2}Z,
			$$
			since by matrix calculus we know $\dint\Psi(\operatorname{Id}_m)[Z] = -\frac{1}{2}Z$ (combine, e.g., \cite[Theorem 1.1]{del2018taylor} and \cite[Theorem 8.3]{magnus2019matrix} to $\operatorname{Id}_m$ using the chain rule). Note that almost surely, $\Phi(\,\cdot\,, u,g_i)$ is differentiable in a neighborhood of $\operatorname{Id}_m$. We do a Taylor expansion of $\Phi$ in $\operatorname{Id}_m$ with integral remainder (see \cite[Theorem 2]{folland2005higher}). This means
			\begin{align}
				&\frac{1}{\sqrt{N}}\sum_{i=1}^N (|\langle \sqrt{N}v_i, u\rangle|^q - \E[|g|^q])\notag \\
				&\overset{d}{=} \frac{1}{\sqrt{N}}\sum_{i=1}^N \Phi(\operatorname{Id}_m,u,g_i) -\E[|g|^q] + \sum_{k=1}^m\sum_{\ell = k}^m (F_N - \operatorname{Id}_m)_{k,\ell}\int_0^1 \frac{1}{\sqrt{N}}\sum_{i=1}^N \frac{\dint}{\dint m_{k,\ell}}\Phi(\operatorname{Id}_m + t(F_N-\operatorname{Id}_m),u,g_i)\dint t \notag\\
				&= \frac{1}{\sqrt{N}}\sum_{i=1}^N |\langle g_i,u\rangle|^q -\E[|g|^q] + \sum_{k=1}^m\sum_{\ell = k}^m (\sqrt{N}(F_N - \operatorname{Id}_m))_{k,\ell} \frac{1}{N}\sum_{i=1}^N\int_0^1  \frac{\dint}{\dint m_{k,\ell}}\Phi(\operatorname{Id}_m + t(F_N-\operatorname{Id}_m),u,g_i)\dint t. \label{eq: CLT proof taylor exp}
			\end{align}
			Next, we show that 
			\begin{align}\label{eq: CLT proof def c1}
				Z^{k,\ell}_N(\,\cdot\,) = \frac{1}{N}\sum_{i=1}^N\int_0^1  \frac{\dint}{\dint m_{k,\ell}}\Phi(\operatorname{Id}_m + t(F_N-\operatorname{Id}_m),u,g_i)\dint t \xrightarrow{a.s.} c_1^{k,\ell}(\,\cdot\,)
			\end{align}
			for some deterministic element $c_1^{k,\ell}\in C(\Sm)$. First, consider the version of $Z_N$ that restricts $F_N$ to the matrix $F_{N,\delta} := \big(\frac{1}{N}G_{N,\delta}G_{N,\delta}^*\big)^{-\frac{1}{2}}$ which is defined on a compact set around $\operatorname{Id}_m$ in the space of symmetric positive definite matrices:
			$$
			\widehat{Z_N}^{k,\ell}(u) := \frac{1}{N}\sum_{i=1}^N\int_0^1  \frac{\dint}{\dint m_{k,\ell}}\Phi(\operatorname{Id}_m + t(F_{N,\delta}-\operatorname{Id}_m),u,g_i)\,\dint t.
			$$
			Let $\epsilon>0$ and notice $\P(\| Z^{k,\ell}_N - \widehat{Z_N}^{k,\ell}\|_\infty \ge \epsilon) \le \P(\frac{1}{N}G_NG_N^* \not\in \mathbb{K}(m,\delta)) \to 0$. This mean if $\widehat{Z_N}^{k,\ell} \xrightarrow{a.s.} c_1^{k,\ell}$, then $Z_N^{k,\ell} \xrightarrow{\P} c_1^{k,\ell}$. 
			For $M\in  \mathbb{K}(m,\delta)$, consider the pair 
			\begin{align}\label{eq: CLTproofZ_NasTupel}
				\Big((u,M) \mapsto  \frac{1}{N}\sum_{i=1}^N \int_0^1 \frac{\dint}{\dint m_{k,\ell}}\Phi(\operatorname{Id}_m + t(M-\operatorname{Id}_m),u,g_i)\dint t, \frac{1}{N}(G_{N,\delta}G_{N,\delta}^*)\Big),
			\end{align}
			which is an element in $C(\Sm \times \mathbb{K}(m,\delta))\times \mathbb{K}(m,\delta)$. This is an empirical mean over i.i.d.\, random elements in a separable Banach space. Further, we have that 
			$$
			\E\big[\| (u,M)\mapsto \int_0^1 \frac{\dint}{\dint m_{k,\ell}}\Phi(\operatorname{Id}_m + t(M-\operatorname{Id}_m),u,g_i)\dint t \|_\infty\big] \le \int_0^1 \E[q\|(\operatorname{Id}_m + tM)g\|^{q-1}g_\ell]\dint t<\infty.
			$$
			By the strong law of large number for separable Banach spaces (\cite[Corollary 7.10]{ledoux2013probability}), we have convergence to the pair 
			$$
			\Big(\E\Big[(u,M)\mapsto \int_0^1\frac{\dint}{\dint m_{k,\ell}}\Phi(\operatorname{Id}_m + t(M-\operatorname{Id}_m),u,g_i)\dint t \Big], \operatorname{Id}_m\Big)
			$$
			in the space $C(\Sm \times \mathbb{K}(m,\delta))\times \mathbb{K}(m,\delta)$ in the sense of the Bochner integral. The continuous mapping theorem applied to the function $(f,M)\mapsto f(\,\cdot\,, M^{-\frac{1}{2}})$ implies 
			$$
			\widehat{Z_N}(\,\cdot\,)^{k,\ell} \xrightarrow{a.s.}  \E\Big[\int_0^1\frac{\dint}{\dint m_{k,\ell}}\Phi(\operatorname{Id}_m + t(\operatorname{Id}_m -\operatorname{Id}_m),\,\cdot\,,g_i)\dint t \Big] = \E[ \frac{\dint}{\dint m_{k,\ell}}\Phi(\operatorname{Id}_m,\,\cdot\,,g_i) ] = e_{k,\ell}(\,\cdot\,),
			$$
			which is what we wanted to show. \\
			
			By Lemma \ref{lem: CLT for gaussian sup version of ellp}, we have $\frac{1}{\sqrt{N}}\sum_{i=1}^N |\langle g_i,\,\cdot\,\rangle|^q -\E[|g|^q] \xrightarrow{d} Z_1$ with 
			$$
			\E[Z_1(u_1)Z_1(u_2)] = \E[|\langle g_1,u_1\rangle \langle g_1,u_2\rangle|^q] - \E[|g|^q]^2.
			$$
			Further, we know from above that 
			$$
			\sqrt{N}(F_N - \operatorname{Id}_{m}) \xrightarrow{d} Z_2 \sim \mathcal{N}(0,\Sigma).
			$$
			
			% Let $G(i)\in \R^{m\times m}$ be defined by 
			% \begin{align}\label{eq: G(i) def}
				%G(i) :=
				%\left( 
				%\begin{array}{cccc}
				%g_{i,1} g_{i,1} & g_{i,1} g_{i,2} & \cdots & g_{i,1} g_{i,m} \\
				%g_{i,2} g_{i,1} & g_{i,2} g_{i,2} & \cdots & g_{i,2} g_{i,m} \\
				%\vdots & \vdots & \ddots & \vdots \\
				%g_{i,m} g_{i,1} & g_{i,m} g_{i,2} & \cdots & g_{i,m} g_{i,m}
				%\end{array}
				%\right)
				%    \end{align}
			By Prohorovs theorem (\cite[Theorem 23.2]{kallenberg1997foundations}) both sequences are tight which implies that the pair
			$$
			\Big(\frac{1}{\sqrt{N}}\sum_{i=1}^N |\langle g_i,(\,\cdot\,)\rangle|^q -\E[|g|^q], \sqrt{N}(F_N - \operatorname{Id}_{m})\Big)
			$$
			is also tight (even without independence). For any $\{u_1,\ldots,u_k\} \subseteq \Sm$ the multivariate central limit theorem implies that
			$$
			\Big(\frac{1}{\sqrt{N}}\sum_{i=1}^N |\langle g_i,u_1\rangle|^q-\E[|g|^q],\ldots,|\langle g_i,u_k\rangle|^q-\E[|g|^q], \frac{1}{\sqrt{N}}G_NG_N^* - \operatorname{Id}_m\Big)
			$$
			converges weakly to a centered multivariate Gaussian vector with its corresponding covariance matrix on $\R^{k}\times \R^{m\times m}$. The two individual central limit theorems together with tightness and Lemma \ref{lem: uplift topology in weak conv} imply the weak convergence of 
			$$
			\Big(\frac{1}{\sqrt{N}}\sum_{i=1}^N (|\langle g_i,\,\cdot\,\rangle|^q-\E[|g|^q], \frac{1}{\sqrt{N}}G_NG_N^* - \operatorname{Id}_m\Big) \xRightarrow[N\to\infty]{} Z = (Z_1,Z_2)
			$$
			since the limit is determined by the projections. Here the tuple $Z = (Z_1,Z_2)$ is a Gaussian process (all projections are Gaussian) indexed by $\Sm\cup \{1,\ldots,m\}^2$. The covariance is given by 
			$$
			\E[Z(s)Z(t)] = \begin{cases}
				\E[Z_1(s)Z_1(t)] = \E[|\langle g_1,u_1\rangle \langle g_1,u_2\rangle|^q] - \E[|g|^q]^2 &: \quad s,t \in \Sm \\
				\E[Z_1(s)Z_2(t)] = \E[(|\langle g_1,u_1\rangle|^q-\E[|g|^q])g_{t_1}g_{t_2}]   & : \quad s\in \Sm,    t=(t_1,t_2)\in \{1,\ldots,m\}^2\\
				\E[Z_2(s)Z_2(t)] = \Sigma_{s,t} & : \quad s,t \in \{1,\ldots,m\}^2.
			\end{cases}
			$$
			The delta method applied to $(f,M) \mapsto (f,M^{-\frac{1}{2}})$ gives the weak convergence of 
			$$
			\sqrt{N}\Big(\big(\frac{1}{N}\sum_{i=1}^N|\langle g_i,\,\cdot\,\rangle|^q\big), F_N) - (\E[|g|^q], \operatorname{Id}_m)\Big) = \Big(\frac{1}{\sqrt{N}}\sum_{i=1}^N (|\langle g_i,\,\cdot\,\rangle|^q-\E[|g|^q]), \sqrt{N}(F_N-\operatorname{Id}_m)\Big) 
			$$
			to the tuple $Z=(Z_1, -\frac{1}{2}Z_2)$ with covariance
			$$
			\E[Z(s)Z(t)] = \begin{cases}
				\E[Z_1(s)Z_1(t)] = \E[|\langle g_1,u_1\rangle \langle g_1,u_2\rangle|^q] - \E[|g|^q]^2 &: \quad s,t \in \Sm \\
				\E[Z_1(s)(-\frac{1}{2})Z_2(t)] = -\frac{1}{2}\E[(|\langle g_1,u_1\rangle|^q-\E[|g|^q])g_{t_1}g_{t_2}]   &: \quad s\in \Sm, t=(t_1,t_2)\in \{1,\ldots,m\}^2\\
				\E[(-\frac{1}{2})Z_2(s)(-\frac{1}{2})Z_2(t)] = \frac{1}{4}\Sigma_{s,t} &: \quad s,t \in \{1,\ldots,m\}^2.
			\end{cases}
			$$
			Writing $e(u) := (e_{k,\ell}(u))_{k,\ell=1}^m$ and $Z_N(u) = (Z_N(u)^{k,\ell})_{k,\ell}^m$, Slutsky's theorem provides the joint convergence of 
			$$ 
			\Big(\frac{1}{\sqrt{N}}\sum_{i=1}^N (|\langle g_i,\,\cdot\,\rangle|^q-\E[|g|^q]), \sqrt{N}(F_N-\operatorname{Id}_m), Z_N \Big) \xRightarrow[N\to\infty]{} \Big(Z_1,-\frac{1}{2}Z_2, e(\,\cdot\,)\Big).
			$$
			Finally, by the continuous mapping theorem, we obtain the weak convergence
			$$
			\frac{1}{\sqrt{N}}\sum_{i=1}^N (|\langle \sqrt{N}v_i, \,\cdot\,\rangle|^q - \E[|g|^q]) \xRightarrow[N\to\infty]{} Z(\,\cdot\,) := Z_1(\,\cdot\,) -\frac{1}{2} \sum_{k=1}^m\sum_{\ell = k}^m Z_2^{k,\ell} e_{k,\ell}(\,\cdot\,).
			$$
			Notice that the right-hand side is Gaussian (see, e.g., \cite[Page 3]{romisch2004delta}). We use Lemma \ref{lem: exact_expectations} to show that the covariance is given by 
			\begin{align*}
				& \E[Z(u)Z(v)] \\
				&=  \E[Z_1(u)Z_1(v)] -\frac{1}{2} \sum_{k=1}^m\sum_{\ell = k}^m \E[Z_1(u)Z_2^{k,\ell} e_{k,\ell}(v)]+\E[Z_1(v)Z_2^{k,\ell} e_{k,\ell}(u)]\\
				&\quad\quad\quad +\frac{1}{4} \sum_{k\ge \ell}\sum_{i\ge j} e_{k,\ell}(u)e_{i,j}(v)\E[Z_2^{k,\ell}Z_2^{i,j}]\\
				&= \E[|\langle g,u\rangle\langle g,v\rangle|^q] - \E[|g|^q]^2 \\
				&\quad\quad\quad -\frac{1}{2}\sum_{k=1}^m qu_k\E[|\langle g,u\rangle|^{q-2}\langle g,u\rangle g_k](\E[|\langle g,v\rangle|^q g_k] - \E[|g|^q])\\
				&\quad\quad\quad - \frac{1}{2}\sum_{k=1}^m qv_k\E[|\langle g,v\rangle|^{q-2}\langle g,v\rangle g_k](\E[|\langle g,u\rangle|^q g_k] - \E[|g|^q])\\
				&\quad\quad\quad-\frac{1}{2}\sum_{k=1}^m\sum_{\ell = k+1}^m (qu_k \E[|\langle g,u\rangle|^{q-2}\langle g,u\rangle g_\ell] + qu_\ell \E[|\langle g,u\rangle|^{q-2}\langle g,u\rangle g_k] )\E[|\langle g,v\rangle|^q g_k g_\ell]\\
				&\quad\quad\quad -\frac{1}{2}\sum_{k=1}^m\sum_{\ell = k+1}^m (qv_k \E[|\langle g,v\rangle|^{q-2}\langle g,v\rangle g_\ell] + qv_\ell \E[|\langle g,v\rangle|^{q-2}\langle g,v\rangle g_k] )\E[|\langle g,u\rangle|^q g_k g_\ell]\\
				&\quad\quad\quad+ \frac{1}{2}\sum_{k=1}^m q^2 u_k v_k \E[|\langle g,u\rangle|^{q-2}\langle g,u\rangle g_k]\E[|\langle g,v\rangle|^{q-2}\langle g,v\rangle g_k]\\
				&\quad\quad\quad + \sum_{k=1}^m\sum_{\ell = k+1}^m q^2 u_k v_k \E[|\langle g,u\rangle|^{q-2}\langle g,u\rangle g_\ell]\E[|\langle g,v\rangle|^{q-2}\langle g,v\rangle g_\ell]\\
				%nextline
				& = \E[|\langle g,u\rangle\langle g,v\rangle|^q] - \E[|g|^q]^2\\
				&\quad\quad\quad -\frac{q}{2}\sum_{k=1}^m u_k^2 \E[|g|^q]((1-qv_k^2)\E[|g|^q] - \E[|g|^q])+v_k^2 \E[|g|^q]((1-qu_k^2)\E[|g|^q] - \E[|g|^q])\\
				&\quad\quad\quad-2q^2\sum_{k=1}^m\sum_{\ell = k+1}^m u_k u_\ell v_kv_\ell \E[|g|^q]^2 +\frac{q^2}{2}\sum_{k=1}^m u_k^2 v_k^2 \E[|g|^q]^2\\
				&\quad\quad\quad + q^2 \sum_{k=1}^m\sum_{\ell = k+1}^m \E[|g|^q]^2 u_k u_\ell v_k v_\ell\\
				&= \E[|\langle g,u\rangle\langle g,v\rangle|^q] - \E[|g|^q]^2\Big(1 + \frac{q^2}{2}\sum_{k=1}^m v_k^2 u_k^2 + q^2\sum_{k=1}^m \sum_{\ell = k+1}^m u_k u_\ell v_k v_\ell \Big).
			\end{align*}
			The explicit formulas in Lemma \ref{lem: exact_expectations} and Lemma \ref{lem: asym expectation} yield the claim.
		\end{proof}
		
		\begin{proof}[Proof of Equations \eqref{eq: Hausdorff conv to ball proj} and \eqref{eq: Hausdorff conv to ball sec}]
			
			Let $V_N=(v_1,\dots,v_N)$ be uniformly distributed in $\mathbb{V}_{m,N}$ with $N\ge m$. Then an inspection of the Taylor expansion (Equation \eqref{eq: CLT proof taylor exp}) and the fact that $F_N - \operatorname{Id}_m \to 0$ a.s., gives us
			$$
			h(N^{\frac{1}{p}-\frac{1}{2}}V_N\mathbb{B}_p^N, \,\cdot\,)^q = \frac{1}{N}\sum_{i=1}^N  |\langle \sqrt{N}v_i, \,\cdot\, \rangle|^q \xrightarrow{a.s.} \E[|g|^q] = \frac{2^{q/2}\Gamma(\frac{1+q}{2})}{\sqrt{\pi}}
			$$
			in $C(\Sm, \|\,\cdot\,\|_\infty)$ (see Lemma \ref{lem: asym expectation}). Remember that a random element from the Stiefel manifold has the same distribution, up to an orthogonal transformation, as the projection onto a random subspace. Hence, for the conclusion of the projection it is sufficient to show 
			$$
			d_H\Big(N^{\frac{1}{p}-\frac{1}{2}}(V_N\mathbb{B}_p^N), \frac{\sqrt{2}}{\pi^{\frac{1}{2q}}} \Gamma(\frac{q+1}{2})^{1/q}\mathbb{B}_2^m\Big) \xrightarrow{a.s.} 0.
			$$
			The continuous mapping theorem applied to $x\mapsto x^{1/q}$ in $C(\Sm, \|\,\cdot\,\|_\infty)$ and to the function that maps the support function to its corresponding convex body give the conclusion of the theorem for the projection. For the result for the sections let $V_N\sim \operatorname{Unif}(\mathbb{V}_{m,N})$ and $E = \operatorname{Range}(V_N^*)$. Then, for each $v$ in the unit sphere of $E$, there exists a unique $u\in\Sm$ such that $V_N^*u = v$ and
			$$
			\rho(\B_p^N\cap E, v) = \rho(\B_p^N \cap E, V_N^*u) = \frac{1}{\|V_N^*u\|_p} = \frac{1}{h(V_N\B_q^N, u)},
			$$
			which we showed in Lemma \ref{lem: support function representation} and Equation \eqref{eq: 1 dim projection identity}. This gives the analogue result for sections.
		\end{proof}
		
		\begin{proof}[Proof of Theorem \ref{thm: Functional MDP}]
			We follow the proof of Theorem \ref{thm: Functional CLT} and adopt the notation. By Equation \eqref{eq: CLT proof taylor exp},
			\begin{align*}
				&\frac{1}{\beta_N\sqrt{N}}\sum_{i=1}^N (|\langle \sqrt{N}v_i, u\rangle|^q - \E[|g|^q])\\
				&\overset{d}{=} \frac{1}{\beta_N\sqrt{N}}\sum_{i=1}^N |\langle g_i,u\rangle|^q -\E[|g|^q] + \sum_{k=1}^m\sum_{\ell = k}^m (\frac{\sqrt{N}}{\beta_N}(F_N - \operatorname{Id}_m))_{k,\ell} \frac{1}{N}\sum_{i=1}^N\int_0^1  \frac{\dint}{\dint m_{k,\ell}}\Phi(\operatorname{Id}_m + t(F_N-\operatorname{Id}_m),u,g_i)\dint t\\
				&= F(X_N(u), Y_N, Z_N(u)).
			\end{align*}
			Here $F: C(\Sm)\times \overline{S^m_+} \times C(\Sm)^{m\times m}  \to C(\Sm)$, $\overline{S^m_+}\subseteq \R^{m \times m}$ denoting the space of positive semi-definite and symmetric matrices, is linear and continuous and the random variables are defined by 
			\begin{align*}
				&X_N(u) := \frac{1}{\beta_N\sqrt{N}}\sum_{i=1}^N (|\langle g_i, u\rangle|^q - \E[|g|^q]),\\
				&Y_N := \frac{1}{\beta_N\sqrt{N}}(F_N- \operatorname{Id}_m)\\
				& Z^{k,\ell}_N(u) = \frac{1}{N}\sum_{i=1}^N\int_0^1  \frac{\dint}{\dint m_{k,\ell}}\Phi(\operatorname{Id}_m + t(F_N-\operatorname{Id}_m),u,g_i)\dint t,
			\end{align*}
			where $Z^{k,\ell}_N(\,\cdot\,)$ is $(k,\ell)$-th component of $Z_N$. For each collection $\{u_1, u_2,\ldots \} = \Sm\cap \Q^m$, the sequence 
			$$
			\Big( \frac{1}{\beta_N\sqrt{N}}\sum_{i=1}^N \Big(|\langle g_i, u_1\rangle|^q - \E[|g|^q],\ldots,|\langle g_i, u_k\rangle|^q - \E[|g|^q]\Big),   \frac{1}{\beta_N\sqrt{N}} G_NG_N^* - \operatorname{Id}_m\Big)
			$$
			satisfies an MDP with speed $\beta_N^2$ in $\R^{k}\times \overline{S^m_+}$ by Proposition \ref{prop: MDP in R^m} since $q<2$. The rate function is given by $I_k:\R^{k}\times\overline{S^m_+}\to[0,\infty]$,
			$$
			I_k(x,y)= \frac{\Big\langle \begin{pmatrix} x \\ y \end{pmatrix}
				,\mathcal{C}_k^{-1}\begin{pmatrix} x \\ y \end{pmatrix}
				\Big\rangle}{2},
			$$ 
			where $\mathcal{C}_k$ is the covariance matrix of 
			$$
			\Big(\Big(|\langle g_i, u_1\rangle|^q - \E[|g|^q],\ldots,|\langle g_i, u_k\rangle|^q - \E[|g|^q]\Big),  gg^* - \operatorname{Id}_m\Big).
			$$
			Notice that no component is a multiple of any other component, so there is no non-trivial linear combination of the entries that is a constant random variable and consequently the covariance matrix is positive definite (and hence invertible). By Lemma \ref{lem: LDP for function}, the sequence $(X_N, \frac{1}{\beta_N\sqrt{N}}G_NG_N^* - \operatorname{Id}_m)$ satisfies a weak LDP with speed $\beta_N^2$ and good rate function 
			$$
			J_1(f,y) = \sup_{k\in\N}I_k(f(u_1),\ldots,f(u_k),y).
			$$
			Note that the rate function on Polish spaces is unique as soon as the weak LDP holds (\cite[Page 118]{dembo2009techniques}). The sequence $X_N$ also satisfies an LDP since it converges to zero in probability (by Slutsky's theorem and Theorem \ref{lem: CLT for gaussian sup version of ellp}) and 
			$$
			\E[e^{\lambda \sup_u||\langle g_i, u_1\rangle|^q - \E[|g|^q]|}]\le  \E[e^{\lambda \|g_i\|^q + \E[|g|^q]}] < \infty
			$$
			for all $\lambda>0$ because $q<2$ (see e.g. \cite[Page 3049]{hu2003moderate}). The sequence $\frac{1}{\beta_N\sqrt{N}}G_NG_N^* - \operatorname{Id}_m$ clearly also satisfies the MDP in $\overline{S^m_+}$ by Proposition \ref{prop: MDP in R^m}. Since both sequences are exponentially tight, the tuple is also exponentially tight (Lemma \ref{lem: exp tight produkt space}). The delta method (Proposition \ref{prop: delta method for LDPs}) applied to $(f,M) \mapsto (f,M^{-\frac{1}{2}})$ gives that 
			$$
			\frac{\sqrt{N}}{\beta_N}\Big(\big(\frac{1}{N}\sum_{i=1}^N|\langle g_i,\,\cdot\,\rangle|^q\big), F_N) - (\E[|g|^q], \operatorname{Id}_m)\Big) = \Big(\frac{1}{\beta_N\sqrt{N}}\sum_{i=1}^N (|\langle g_i,\,\cdot\,\rangle|^q-\E[|g|^q]), \sqrt{N}(F_N-\operatorname{Id}_m)\Big) 
			$$
			satisfies an MDP with speed $\beta_N^2$ and rate function $J_1((\,\cdot\,),-2(\,\cdot\,))$. Further $Z_N$ satisfies an MDP with speed $N$, because of Equation \eqref{eq: CLTproofZ_NasTupel} and since 
			$$
			\sup_u \frac{\dint}{\dint m_{i,j}}\Phi(M,u,g) = \sup_u q|\langle Mg,u \rangle |^{q-2}\langle Mg,u \rangle (g_j u_i+g_i u_j) \le c \|g\|^{q}
			$$
			for some $c\in(0,\infty)$ which satisfies the assumptions of Cramér's theorem on Banach spaces because $q<2$ (see, e.g., \cite{bahadur1979large}). Hence, the tuple $(X_N,Y_N,Z_N)$ is exponentially equivalent to $(X_N,Y_N,c_1)$ with respect to the speed $\beta_N$, where $c_1$ is the constant introduced in Equation \eqref{eq: CLT proof def c1}. The good rate function for $(X_N,Y_N,c_1)$ is $J_2(f,y,x) = J_1(f,y)$ if $x=c_1$ and $J_2(f,y,x) = \infty$ otherwise. Applying the contraction principle (Proposition \ref{prop: contraction principle}) to $F$ yields the MDP. The good rate function is given by 
			\begin{align}\label{eq: MDP rate function}
				J(f) = \inf_{F^{-1}(f) = (Z_1,Z_2, c_1)} \sup_{k\in\N} \frac{\langle \widehat{u},\mathcal{C}_k^{-1}\widehat{u}\rangle}{2},
			\end{align}
			where $\mathcal{C}_k$ is the covariance matrix of 
			$$
			\Big(\Big(|\langle g_i, u_1\rangle|^q - \E[|g|^q],\ldots,|\langle g_i, u_k\rangle|^q - \E[|g|^q]\Big),  gg^* - \operatorname{Id}_m\Big),
			$$
			and $\widehat{u} = (Z_1(u_1),\ldots,Z_1(u_k), -2Z_2)$. Here $Z_1\in C(\Sm)$ and $Z_2\in \overline{S^m_+}$. Notice that this function is positively homogeneous of degree $2$.
		\end{proof}
		
		\begin{rmk}\label{rmk: MDP rate function}
			In this remark we give the explicit form of the rate function in a comprehensive way. The rate function $J:C(\Sm) \to [0,\infty]$ is defined by 
			\begin{align*}
				J(f) = \inf_{F^{-1}(f) = (Z_1,Z_2, c_1)} \sup_{k\in\N} \frac{\langle \widehat{u},\mathcal{C}_k^{-1}\widehat{u}\rangle}{2},
			\end{align*}
			where $\mathcal{C}_k$ is the covariance matrix of 
			$$
			\Big(\Big(|\langle g, u_1\rangle|^q - \E[|g_1|^q],\ldots,|\langle g, u_k\rangle|^q - \E[|g|^q]\Big),  gg^* - \operatorname{Id}_m\Big),
			$$
			where $g = (g_1,\ldots,g_m)\sim \mathcal{N}(0,\operatorname{Id}_m)$, $g^*$ is its transposition, and $\widehat{u} = (Z_1(u_1),\ldots,Z_1(u_k), Z_2)$. Here the infimum runs over $Z_1\in C(\Sm)$ and $Z_2\in \overline{S^m_+}\subseteq \R^{m\times m}$, $\overline{S^m_+}$ being the space of symmetric and positive semi-definite matrices. Further, the constant matrix $c_1$ is given by 
			$$
			(c_1)_{k,\ell} = \E[ \frac{\dint}{\dint m_{k,\ell}} |\langle Mg_i, u\rangle|^q ],
			$$ 
			where the derivative is taken in $\overline{S^m_+}$. The linear function $F: C(\Sm)\times \overline{S^m_+} \times C(\Sm)^{m\times m}  \to C(\Sm)$ is defined by 
			$$
			F(f, (M_{1,1},M_{1,2},\ldots,M_{m,m},h_{1,1},h_{1,2},\ldots,h_{m,m}) =  f -\frac{1}{2}\sum_{k=1}^m\sum_{\ell = k}^m M_{\ell,k} h_{\ell,k} .
			$$
		\end{rmk}

		\subsection{Proof of the LDP}\label{sec: LDP proof}
		In this section, we prove the large deviation principle. We make use of the Sanov-type result \cite[Theorem 2.8]{kim2021large} for columns of random elements in the Stiefel manifold and combine this with the explicit form of the support function.\\
		
		\noindent Remember the notation from the statement of the main results. We denote by $\mathscr{M}_q(\R^m)$ the set of probability measures on $\R^m$ equipped the $q$-Wasserstein topology, which is defined as the coarsest topology that makes integration of continuous functions bounded by $x\mapsto c(1+\|x-x_0\|_2^q)$ a continuous operation, where $x_0\in \R^m$ and $c>0$ is some constant. \\
		
		\noindent The following result is also proven in \cite[Theorem 2.8]{kim2021large} and states an LDP for the empirical measure of random elements from the Stiefel manifold.
		
		\begin{lemma}\label{lem: LDP for empirical measure}
			For all $N\ge m$, let $V_N = (v_1,\ldots,v_N)$ be independently and uniformly distributed in $\mathbb{V}_{m,N}$. Then, the sequence of random measures
			$$
			\mathscr{L}_N := \frac{1}{N}\sum_{i=1}^N \delta_{\sqrt{N}v_i}
			$$
			satisfies, for $N\to\infty$, an LDP on $\mathscr{M}_q(\R^m)$ for any $0< q< 2$ with speed $N$ and strictly convex rate function $I_{\mathrm{Stiefel}}:\mathscr{M}_1(\R^m)\to [0,\infty]$, 
			$$
			I_{\mathrm{Stiefel}}(\nu) = \begin{cases}
				H(\nu|\gamma^{\otimes m}) + \frac{1}{2}\operatorname{Trace}(\operatorname{Id}_m - \mathscr{C}(\nu)) & :  \quad\operatorname{Id}_m - \mathscr{C}(\nu) \text{ positive semidefinite },\\
				+\infty & : \quad \text{ otherwise. }
			\end{cases}
			$$
			Here $\mathscr{C}$ is the covariance operator introduced in Equation \eqref{eq: covariance operaotor C}.
		\end{lemma}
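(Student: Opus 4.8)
This is precisely \cite[Theorem 2.8]{kim2021large}, so the quickest proof is to quote it. For a self-contained argument I would proceed as follows. The plan is to realize $\mathscr{L}_N$ as a continuous image of the pair (empirical measure, empirical covariance matrix) of an i.i.d. Gaussian sample, for which a joint LDP can be written down explicitly, and then to apply the contraction principle. First, use the standard Gaussian model of the Stiefel manifold: if $G_N\in\R^{m\times N}$ has i.i.d.\ $\mathcal N(0,1)$ entries with columns $g_1,\dots,g_N\sim\mathcal N(0,\operatorname{Id}_m)$, then $V_N:=(G_NG_N^*)^{-1/2}G_N$ is Haar on $\mathbb{V}_{m,N}$ and has columns $v_i=(G_NG_N^*)^{-1/2}g_i$. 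Writing $L_N^g:=\frac1N\sum_{i=1}^N\delta_{g_i}$ and $B_N:=\frac1N\sum_{i=1}^N g_ig_i^*=\mathscr{C}(L_N^g)$, we get $\sqrt{N}v_i=B_N^{-1/2}g_i$ and hence, almost surely, $\mathscr{L}_N=\Psi(L_N^g,B_N)$ where $\Psi(\mu,S):=(S^{-1/2})_*\mu$ is continuous on $\mathscr{M}_q(\R^m)\times\mathcal{S}^m_{++}$ (here $\mathcal{S}^m_{++}$ is the cone of positive definite symmetric matrices), pushing forward by a fixed invertible linear map being a $q$-Wasserstein continuous operation.

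Second, and this is the crux, I would establish a joint LDP for $(L_N^g,B_N)$ on $\mathscr{M}_q(\R^m)\times\mathcal{S}^m$ with speed $N$ and good rate function
$$\widetilde J(\mu,S)=H(\mu|\gamma^{\otimes m})+\tfrac12\operatorname{Trace}(S-\mathscr{C}(\mu)),$$
valid when $\mu\ll\gamma^{\otimes m}$ and $S-\mathscr{C}(\mu)$ is positive semidefinite, and $+\infty$ otherwise. The measure marginal is Sanov's theorem upgraded to the $q$-Wasserstein topology (valid for $q<2$, since the reference Gaussian has lighter-than-$W_q$ tails); the extra trace term reflects the fact that, for $q<2$, an asymptotically vanishing fraction of the $g_i$ may be sent to distance $R\to\infty$, which is invisible to $L_N^g$ in $\mathscr{M}_q(\R^m)$ but adds a fixed amount to $B_N$ at exponential cost equal to $\tfrac12$ the added covariance; consistently, $\inf_\mu\widetilde J(\mu,S)$ is the Cram\'er rate $\tfrac12(\operatorname{Trace}S-m-\log\det S)$ of $B_N$, optimized by the Gaussian $\mu=\mathcal N(0,S)$. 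Proving the two LDP bounds requires an explicit change-of-measure for the lower bound (splitting off a small, far-away cluster to realize the prescribed covariance boost) and an exponential Chebyshev / projective-limit argument for the upper bound, together with verifying goodness and lower semicontinuity of $\widetilde J$. This joint LDP is the main technical obstacle; it is exactly the content of the cited result.

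Third, with the joint LDP in hand, the contraction principle (Proposition \ref{prop: contraction principle}) applied to $\Psi$ on $\mathscr{M}_q(\R^m)\times\mathcal{S}^m_{++}$, which contains the whole effective domain of $\widetilde J$, yields that $\mathscr{L}_N$ satisfies an LDP with speed $N$ and rate function $I_{\mathrm{Stiefel}}(\nu)=\inf\{\widetilde J(\mu,S):(S^{-1/2})_*\mu=\nu\}$. This optimization closes in closed form: the constraint forces $\mu=(S^{1/2})_*\nu$, hence $\mathscr{C}(\mu)=S^{1/2}\mathscr{C}(\nu)S^{1/2}$, so the admissibility condition $S\succeq\mathscr{C}(\mu)$ is (by congruence with $S^{-1/2}$) equivalent to $\operatorname{Id}_m\succeq\mathscr{C}(\nu)$, independently of $S$. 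Using the change-of-variables identity
$$H\big((S^{1/2})_*\nu\,\big|\,\gamma^{\otimes m}\big)=H(\nu|\gamma^{\otimes m})-\tfrac12\operatorname{Trace}\mathscr{C}(\nu)-\tfrac12\log\det S+\tfrac12\operatorname{Trace}(S\mathscr{C}(\nu))$$
one computes $\widetilde J\big((S^{1/2})_*\nu,S\big)=H(\nu|\gamma^{\otimes m})-\tfrac12\operatorname{Trace}\mathscr{C}(\nu)-\tfrac12\log\det S+\tfrac12\operatorname{Trace}S$, whose minimum over $S\succ0$ is attained at $S=\operatorname{Id}_m$ (by convexity of $S\mapsto\operatorname{Trace}S-\log\det S$), with value $H(\nu|\gamma^{\otimes m})+\tfrac12\operatorname{Trace}(\operatorname{Id}_m-\mathscr{C}(\nu))$ on the domain $\{\mathscr{C}(\nu)\preceq\operatorname{Id}_m\}$. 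This is the asserted formula.

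Finally, strict convexity of $I_{\mathrm{Stiefel}}$ follows because $H(\cdot|\gamma^{\otimes m})$ is strictly convex on its effective domain, $\nu\mapsto\operatorname{Trace}(\operatorname{Id}_m-\mathscr{C}(\nu))$ is affine, and the constraint set $\{\nu:\mathscr{C}(\nu)\preceq\operatorname{Id}_m\}$ is convex; and the unique zero is $\nu=\gamma^{\otimes m}$, consistent with the almost sure convergence $B_N\to\operatorname{Id}_m$, $L_N^g\to\gamma^{\otimes m}$ behind \eqref{eq: Hausdorff conv to ball proj}. The only genuinely hard step is the joint LDP in the second paragraph, particularly pinning down the covariance-boost term and the accompanying topology subtleties; everything downstream is bookkeeping.
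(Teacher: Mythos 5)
The paper itself offers no proof of this lemma beyond the sentence ``The following result is also proven in \cite[Theorem 2.8]{kim2021large},'' so your first sentence reproduces the paper's entire argument. Your supplementary sketch goes further than the paper does; let me comment on it.

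The decomposition is well-chosen: writing $\mathscr{L}_N=(B_N^{-1/2})_*L_N^g$ with $L_N^g$ the Gaussian empirical measure and $B_N=\mathscr{C}(L_N^g)=\frac1N G_N G_N^*$ correctly reflects that the Stiefel constraint $\mathscr{C}(\mathscr{L}_N)=\operatorname{Id}_m$ holds identically in $N$, while in the $W_q$-topology with $q<2$ mass may escape to infinity, which is why the limit only satisfies $\mathscr{C}(\nu)\preceq\operatorname{Id}_m$. The downstream algebra is all correct: the entropy change-of-variables identity under the pushforward by $S^{1/2}$, the observation that the constraint $S\succeq\mathscr{C}(\mu)$ is congruence-invariant and collapses to $\operatorname{Id}_m\succeq\mathscr{C}(\nu)$, and the minimization of $S\mapsto\operatorname{Trace}S-\log\det S$ at $S=\operatorname{Id}_m$, which reproduces $H(\nu\,|\,\gamma^{\otimes m})+\tfrac12\operatorname{Trace}(\operatorname{Id}_m-\mathscr{C}(\nu))$. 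Strict convexity follows as you say. So if the joint LDP for $(L_N^g,B_N)$ with rate $\widetilde J$ were available, the rest of your argument would close.

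The genuine gap, which you flag honestly, is exactly that joint LDP. Note that $B_N$ is a deterministic function of $L_N^g$, so $(L_N^g,B_N)$ is just the image of $L_N^g$ under $\mu\mapsto(\mu,\mathscr{C}(\mu))$; since $\mathscr{C}$ is not $W_q$-continuous for $q<2$, the contraction principle does not hand you the joint LDP for free, and the additive $\tfrac12\operatorname{Trace}(S-\mathscr{C}(\mu))$ term indeed requires a separate lower-bound tilting (your ``far-away cluster'') and upper-bound exponential estimate. One should also be slightly careful applying the contraction principle on $\mathscr{M}_q(\R^m)\times\mathcal{S}^m_{++}$, since $\mathcal{S}^m_{++}$ is an open subset of the matrix space on which the joint LDP lives; you should observe that the effective domain of $\widetilde J$ forces $\mu\ll\gamma^{\otimes m}$, hence $\mathscr{C}(\mu)\succ 0$ and therefore $S\succ 0$, so that both bounds localize to the open cone (see, e.g., \cite[Lemma 4.1.5]{dembo2009techniques}). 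Since the paper delegates the entire proof to the citation, and your proposal does the same while additionally giving a correct reduction once that external input is granted, your answer is adequate; I just want it on record that the joint LDP is not a corollary of Sanov plus contraction, but a substantive result in its own right.
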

		
		%\textcolor{red}{Hier könnte man versuchen die MDP Version zu zeigen für $\frac{1}{\beta_N \sqrt{N}}\sum_{i=1}^N \delta_{\sqrt{N}v_i}$ und dann den MDP Beweis genau analog zum LDP Beweis zu führen. Das würde die Ratenfunktion fürs MDP vermutlich deutlich vereinfachen. Wie zeigt man da exponentielle Straffheit in $q$-Wasserstein? }
		
		\begin{proof}[Proof of Theorem \ref{thm: Functional LDP}] We study the sequence
			$$
			X_N(\,\cdot\,) \coloneqq \frac{1}{N}\sum_{i=1}^N |\langle \sqrt{N}v_i, \,\cdot\,\rangle|^q
			$$
			of random functions on $\Sm$. Let $\{u_1,\ldots,u_k\} \subseteq \Sm$ be a subset of an enumeration of a dense subset in $\Sm$ and let $F^{(k)}:\mathscr{M}_q(\R^m)\to \R^k$, 
			$$
			F^{(k)}(\mu) = \Big(\int_{\R^m}|\langle \sqrt{N}v_i, u_1\rangle|^q \mu(\dint x), \ldots, \int_{\R^m}|\langle \sqrt{N}v_i, u_k\rangle|^q \mu(\dint x) \Big).
			$$
			This function is continuous and hence, the contraction principle (Proposition \ref{prop: contraction principle}) applies. We have
			$$
			F^{(k)}(\mathscr{L}_N) = \Big(\frac{1}{N}\sum_{i=1}^N (|\langle \sqrt{N}v_i, u_1\rangle|^q,\ldots,\frac{1}{N}\sum_{i=1}^N |\langle \sqrt{N}v_i, u_k\rangle|^q\Big),
			$$
			where we recall 
			$$
			\mathscr{L}_N = \frac{1}{N}\sum_{i=1}^N \delta_{\sqrt{N}v_i}.
			$$
			This sequence satisfies an LDP with speed $N$ by Lemma \ref{lem: LDP for empirical measure} for each $k$. Lemma \ref{lem: LDP for function} implies the weak LDP for $X_N(\,\cdot\,)$ with speed $N$ and rate function $I^\prime_{\mathrm{LDP}}: (C(\Sm), \|\,\cdot\,\|_\infty) \to [0,\infty]$, 
			$$
			I_{\mathrm{LDP}}^\prime(f) = \sup_{k\in\N} \inf\{ I_{\mathrm{Stiefel}}(\mu): F(\mu) = (f(u_1),\ldots,f(u_k))\}
			$$
			where $I_{\mathrm{Stiefel}}$ is the rate function from Lemma \ref{lem: LDP for empirical measure}. For the full LDP, notice that for $L>0$ the set of $L$-Lipschitz functions bounded by $L$, denoted by $\operatorname{Lip}_L(\Sm, [0,L])$, is compact by Arzel\`a-Ascoli's theorem (see \cite[Chapter 7, Theorem 17]{kelley2017general}). Further,
			$$
			\frac{1}{N} \sum_{i=1}^N |\langle \sqrt{N}v_i, u_1\rangle|^q \le \frac{1}{N} \sum_{i=1}^N \|\sqrt{N}v_i\|_2^q = \int_{\R^m} \|\sqrt{N}v_i\|_2^q \mathscr{L}_N(\dint x)
			$$
			and the Lipschitz constant of $X_N$ is upper bounded by 
			$$
			\int_{\R^m} q\|\sqrt{N}v_i\|_2^q \mathscr{L}_N(\dint x)
			$$
			almost surely by Lemma \ref{lem: lipschitz condition}. Hence, by \cite[Lemma 1.2.15]{dembo2009techniques},
			\begin{align*}
				&\limsup_{N\to\infty}\frac{1}{N}\log(\P(X_N \not\in \operatorname{Lip}_L(\Sm, [0,L])))\\
				&= \max\Big\{\limsup_{N\to\infty}\frac{1}{N}\log(\P(\int_{\R^m} q\|\sqrt{N}v_i\|_2^q \mathscr{L}_N(\dint x)\ge L)),\limsup_{N\to\infty}\frac{1}{N}\log(\P(\int_{\R^m} \|\sqrt{N}v_i\|_2^q \mathscr{L}_N(\dint x)\ge L)) \Big\}.
			\end{align*}
			Since $\mathscr{L}_N$ satisfies the LDP with speed $N$ and $q<2$ the same holds for both integrals in the last expression as those are continuous transformations of $\mathscr{L}_N$ in the $q$-Wasserstein topology. The LDP implies exponential tightness. This also makes $(X_N)_{N\in\N}$ exponentially tight in the uniform topology which concludes the proof.
		\end{proof}
		
		\subsection{From functional limit theorems to the volume}\label{sec: From functional limit theorems to the volume}
		Our goal is now to deduce the corresponding results for the volume of random sections and projections of $\ell_p$-balls. The radial or support function of the random projection or random section, which is itself a convex body, is a continuous (or Hadamard differentiable) transformation of the functional results (Lemma \ref{lem: support function representation} and Lemma \ref{lem: section formula}). The contraction principle (Proposition \ref{prop: contraction principle}) or the delta method (Proposition \ref{prop: delta method for LDPs} and Proposition \ref{prop: delta method infinite dim}) allow us to transfer the results to the volume. In fact, the LDP result can be deduced similarly for any continuous functional from the support or radial function. For the MDP and CLT, similar results can be obtained for any Hadamard differentiable map from the support or radial function. Also the support and radial functions for projections and sections satisfy the same weak convergence and the same MDP results (see Lemma \ref{lem: had diff for sup to rad}).\\
		
		The map from the radial function to the volume $f \mapsto \kappa_m\int_{\Sm} f^m(u) \sigma(\dint u)$ is Hadamard differentiable. This is because of Lemma \ref{lem: hadamard dir exp alpha}, since linear functions are Hadamard differentiable and the composition of Hadamard differentiable functions is again Hadamard differentiable.\\
		
		\begin{proof}[Proof of Theorem \ref{thm: LDP for volume of ell_p balls}] By Theorem \ref{thm: Functional LDP} and Lemma \ref{lem: support function representation} the normalized support function of $V_N\B_p^N$ satisfies the LDP with speed $N$. The map $\Phi$, defined in Equation \eqref{eq: def of Phi}, as well as the map 
			$$
			f \mapsto \kappa_m\int_{\Sm} f^m(u) \,\sigma(\dint u)
			$$ 
			are continuous, implying that its composition, i.e., the map to the volume of the projection, is also continuous. For the sections case, apply the continuous transformation given by Lemma \ref{lem: section formula}. The contraction principle (Proposition \ref{prop: contraction principle}) yields the claim for both cases.
		\end{proof}
		
		\begin{proof}[Proof of Theorem \ref{thm: clt for ellp balls 2}] We apply the delta method (Proposition \ref{prop: delta method infinite dim}) to the function $f\mapsto f^{\frac{m}{q}}$ using Lemma \ref{lem: hadamard dir exp alpha} and the process in the statement of Theorem \ref{thm: Functional CLT}. By the representation of the support function (Lemma \ref{lem: support function representation}), this means that 
			$$
			\Sm \ni u \mapsto \sqrt{N}\Big(h(N^{\frac{1}{2}-\frac{1}{q}} V_N \mathbb{B}^N_p, u)^m - \E[|g|^q]^{\frac{m}{q}}\Big)
			$$
			converges weakly to a centered Gaussian process $\widehat{Z}$ on the sphere with 
			\begin{align*}
				&\E[\widehat{Z}(u)\widehat{Z}(v)] = \frac{m^2}{q^2}\E[|g|^q]^{\frac{2m}{q}-2}\E[Z(u)Z(v)]\\
				&= \frac{m^2}{q^2}\E[|g|^q]^{\frac{2m}{q}-2}\Big(\E[|\langle g,u\rangle\langle g,v\rangle|^q] - \E[|g|^q]^2\Big(1 + \frac{q^2}{2}\sum_{k=1}^m v_k^2 u_k^2 + q^2\sum_{k=1}^m \sum_{\ell = k+1}^m u_k u_\ell v_k v_\ell \Big)\Big) .
			\end{align*}
			By Lemma \ref{lem: had diff for sup to rad} and Proposition \ref{prop: delta method infinite dim} the same conclusion holds if we replace the support with the radial function. Now, we apply the function $f\mapsto \kappa_m\int_\Sm f(u) \sigma(\dint u)$. The continuous mapping theorem implies that
			$$
			\sqrt{N}\Big(\vol_m(N^{\frac{1}{p}-\frac{1}{2}}(\mathbb{B}_p^N \big| E_N)) -  \kappa_m\E[|g|^q]^\frac{m}{q}\Big)
			$$
			converges in distribution to a centered Gaussian random variables with variance
			\begin{align*}
				&\V\Big[\kappa_m \int_{\Sm}\widehat{Z}(u)\sigma(\dint u)\Big] = \kappa_m^2 \E\Bigg[\Big(\int_{\Sm}\widehat{Z}(u)\sigma(\dint u)\Big)^2\Bigg]\cr
				&= \kappa_m^2\frac{m^2}{q^2}\E[|g|^q]^{\frac{2m}{q}-2} \int_{\Sm}\int_{\Sm} \E[|\langle g,u\rangle\langle g,v\rangle|^q] - \E[|g|^q]^2\Big(1 + \frac{q^2}{2}\sum_{k=1}^m v_k^2 u_k^2 \Big) \sigma(\dint u)\sigma(\dint v).
			\end{align*}
			Notice that the last part involving $\int_\Sm u_k u_\ell \sigma(\dint u)$ vanishes due to symmetry. We can calculate the integral using Lemma \ref{lem: expectation gauss part with int} and $\int_{\Sm} u_k^2 \sigma(\dint u) = \frac{1}{m}$, $1\le k \le m$, which gives us
			\begin{align*}
				&\V\Big[\kappa_m \int_{\Sm}\widehat{Z}(u)\sigma(\dint u)\Big]\\
				&= \kappa_m^2\frac{m^2}{q^2}\E[|g|^q]^{\frac{2m}{q}-2} \Big( \kappa_m^2\frac{2^{q+1}\Gamma(q+m/2)\Gamma(\frac{1+q}{2})^2\Gamma(1+\frac{m}{2})}{m\pi \Gamma(\frac{m+q}{2})^2} - \kappa_m^2\E[|g|^q]^2\big(1+\frac{q^2}{2m}\big)\Big).
			\end{align*}
			Simplifying this and using Lemma \ref{lem: asym expectation} for exact value of $\E[|g|^q]$ gives
			$$
			\frac{m \pi^{\frac{qm-m}{q}}\big(2^{q/2}\Gamma(\frac{q+1}{2})\big)^{\frac{2m}{q}}\big(4\Gamma(1+m/2)\Gamma(m/2 +q) - (2m+q^2)\Gamma(\frac{m+q}{2})^2 \big)}{2q^2\Gamma(1+m/2)^2\Gamma(\frac{m+q}{2})^2}.
			$$
			This concludes the case for projections. The section case can be done analogously by changing the roles of $p$ and $q$ and the maps $f\mapsto f^{\frac{m}{q}}$ and $f\mapsto f^{-\frac{m}{p}}$ (see Lemma \ref{lem: section formula}). This gives
			$$
			\frac{m \pi^{\frac{pm+m}{p}}\big(2^{p/2}\Gamma(\frac{p+1}{2})\big)^{-\frac{2m}{p}}\big(4\Gamma(1+m/2)\Gamma(m/2 +p) - (2m+p^2)\Gamma(\frac{m+p}{2})^2 \big)}{2p^2\Gamma(1+m/2)^2\Gamma(\frac{m+p}{2})^2}
			$$
			for the variance.
		\end{proof}
		
		\begin{proof}[Proof of Theorem \ref{thm: MDP for volume of ell_p balls}] By Lemma \ref{lem: support function representation} and Theorem \ref{thm: Functional MDP}, the sequence 
			$$
			\Bigg( \frac{\sqrt{N}}{\beta_N}\Big(h(N^{\frac{1}{2}-\frac{1}{q}} V_N \mathbb{B}^N_p, \,\cdot\,)^q - \E[|\langle \sqrt{N}v_i, \,\cdot\,\rangle|^q]\Big) \Bigg)_{N\in\N}
			$$
			satisfies an LDP with speed $\beta_N^2$ and rate function from Theorem \ref{thm: Functional MDP}. We can also replace $\E[|\langle \sqrt{N}v_i, (\,\cdot\,)\rangle|^q]$ by $\E[|g|^q]$ by Lemma \ref{lem: asym expectation} as the sequences are exponentially equivalent. By Proposition \ref{prop: delta method for LDPs} and Lemma \ref{lem: had diff for sup to rad}, we have the same result for the radial function. As in the proof of Theorem \ref{thm: clt for ellp balls 2} we apply this to the Hadamard differentiable map to the volume. The rate function transforms according to the delta method in Proposition \ref{prop: delta method for LDPs} (remember that $J_2$ was defined in Equation \eqref{eq: MDP rate function}):
			\begin{align}\label{eq: MDP rate function volume}
				I_{\mathrm{MDP}}^\prosec(x) = \inf_{\dint\Phi_\prosec(\mu_\prosec)[y] = x}J_2(y)
			\end{align}
			where 
			$$
			\Phi(f) = \begin{cases}
				\kappa_m\int_{\Sm} f^{\frac{m}{q}}(u) \sigma(\dint u) & : \quad\prosec = \big|,\\
				\kappa_m\int_{\Sm} f^{-\frac{m}{p}}(u) \sigma(\dint u) & : \quad \prosec = \cap.
			\end{cases} 
			$$
			and $\mu_{\big|} = \E[|g|^q]$, $\mu_\cap = \E[|g|^p]$. 
		\end{proof}

		\subsection*{Acknowledgment} 
		%%%%%%%%%%%%%%%%%%%%%%
		
		The authors have been supported by the DFG project \emph{Limit theorems for the volume of random projections of $\ell_p$-balls} (project number 516672205).
		
		\addcontentsline{toc}{section}{References} 
		
		\bibliographystyle{plain}
		\bibliography{bibliography}
		
		\bigskip
		\bigskip
		
		\medskip
		
		\small

		\medskip
		\noindent \textsc{Joscha Prochno:} Faculty of Computer Science and Mathematics, University of Passau, Dr.-Hans-Kapfinger-Str. 30, 94032 Passau, Germany
		
		\noindent{\it E-mail:} \texttt{joscha.prochno@uni-passau.de}
		
		\medskip
		\noindent \textsc{Christoph Thäle:}  Faculty of Mathematics, Ruhr University Bochum, Universitätsstraße 150,
		44780 Bochum, Germany
		
		\noindent{\it E-mail:} \texttt{christoph.thaele@rub.de}
		
		\medskip
		\noindent \textsc{Philipp Tuchel:}  Faculty of Mathematics, Ruhr University Bochum, Universitätsstraße 150,
		44780 Bochum, Germany
		
		\noindent{\it E-mail:} \texttt{philipp.tuchel@rub.de}

	\end{document}